\documentclass{amsart}

\usepackage[margin=1.5in]{geometry}
\usepackage{amsmath}
\usepackage{amssymb}
\usepackage{amsthm}
\usepackage{enumerate}
\usepackage[colorlinks=true,
citecolor=blue,
linkcolor=red,
urlcolor=black,
anchorcolor=black]{hyperref}
\usepackage{xcolor}
\usepackage[english]{babel}
\usepackage{graphicx}
\usepackage{longtable}
\usepackage{tikz-cd}

\numberwithin{equation}{section}

\makeatletter
\newtheorem*{rep@theorem}{\rep@title}
\newcommand{\newreptheorem}[2]{%
\newenvironment{rep#1}[1]{%
 \def\rep@title{#2 \ref{##1}}%
 \begin{rep@theorem}}%
 {\end{rep@theorem}}}
\makeatother

\newtheorem{theorem}{Theorem}[section]
\newreptheorem{theorem}{Theorem}
\newtheorem{prop}[theorem]{Proposition}
\newtheorem{lemma}[theorem]{Lemma}
\newtheorem{cor}[theorem]{Corollary}
\newreptheorem{cor}{Corollary}
\newtheorem{defn}[theorem]{Definition}

\theoremstyle{definition}
\newtheorem{remark}[theorem]{Remark}

\newcommand{\Z}{\mathbb{Z}}
\newcommand{\F}{\mathbb{F}}
\newcommand{\Q}{\mathbb{Q}}
\newcommand{\C}{\mathbb{C}}
\newcommand{\R}{\mathbb{R}}

\newcommand{\Hyp}{\mathbb{H}}

\newcommand{\calB}{\mathcal{B}}
\newcommand{\calQ}{\mathcal{Q}}

\newcommand{\Qpsquare}{\Q_p^\times / (\Q_p^\times)^2}

\DeclareMathOperator{\Isom}{Isom}

\DeclareMathOperator{\Char}{char}
\DeclareMathOperator{\End}{End}

\title{Cusp types of arithmetic hyperbolic manifolds.}
\author{Duncan McCoy and Connor Sell}
\date{}

\begin{document}
\begin{abstract}
    We establish necessary and sufficient conditions for determining when a flat manifold can occur as a cusp cross-section within a given commensurability class of cusped arithmetic hyperbolic manifolds. This reduces the problem of identifying which commensurability classes of arithmetic hyperbolic manifolds can contain a specific flat manifold as a cusp cross-section to a question involving rational representations of the flat manifold's holonomy group. More generally we show that the holonomy representation provides an obstruction on the quasi-arithmetic manifolds containing a given flat manifold as a cusp cross-section. As applications, we prove that a flat manifold $M$ with a holonomy group of odd order appears as a cusp cross-section in every commensurability class of arithmetic hyperbolic manifolds if and only if $b_1(M)\geq 3$. We also provide examples of flat manifolds that arise as cusp cross-sections in a unique commensurability class of arithmetic hyperbolic manifolds and exhibit examples of pairs of flat manifolds that can never appear as cusp cross-sections in the same quasi-arithmetic hyperbolic manifold.
\end{abstract}
\maketitle

\section{Introduction}
\label{s:introduction}
The combined work of Long-Reid and McReynolds shows that every compact flat $n$-manifold arises as a cusp cross-section of a hyperbolic $(n+1)$-dimensional manifold \cite{LongReid, McReynolds2009covers}. In particular, for a given flat manifold $B$, they construct a commensurability class of arithmetic hyperbolic manifolds such that at least one manifold in this commensurability class contains a cusp diffeomorphic to $B\times \R_{\geq 0}$. In general, the manifolds containing $B$ as a cusp cross-section will have many other cusps. Indeed, there are examples of flat manifolds that can not arise as the cusp cross-section of a 1-cusped hyperbolic manifold \cite{Long-Reid2000geometric} and in sufficiently high dimensions all arithmetic manifolds necessarily have multiple cusps \cite{Stover2013ends}.

The principal result of this paper gives necessary and sufficient conditions for determining when a flat manifold can occur as a cusp cross-section within a given commensurability class of arithmetic hyperbolic manifolds. 
Any rational quadratic form $q$ of signature $(n+1,1)$ defines an associated commensurability class of arithmetic hyperbolic $(n+1)$-manifolds. Moreover all commensurability classes of cusped arithmetic hyperbolic manifolds arise this way when $ n>2 $ (see Section~\ref{s:arith_mflds}). Given any compact flat $n$-manifold $B$, there is an associated finite group $H$, the holonomy group of $B$, and an associated conjugacy class of representations $\rho: H \rightarrow GL_n(\Q)$, which we call the holonomy representations of $B$ (see Section~\ref{s:flat_mflds}).

\newcommand{\thmmaintechnical}{Let $B$ be a compact flat $n$-manifold and let $q$ be a rational quadratic form of signature $(n+1,1)$. Then $B$ arises as a cusp cross-section in the commensurability class of arithmetic hyperbolic manifolds defined by $q$ if and only if there is rational quadratic form $f$ such that
\begin{enumerate}[(i)]
\item $B$ has a holonomy representation with image in $O(f; \Q)$ and
\item the form $f\oplus \langle 1,-1\rangle$ is equivalent to $q$.
\end{enumerate}}
\begin{theorem}\label{thm:main_technical}
\thmmaintechnical
\end{theorem}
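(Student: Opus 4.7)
The plan is to translate the geometry of cusps in arithmetic hyperbolic manifolds of simplest type into the algebra of isotropic lines and Witt decompositions of the defining quadratic form. Writing $V = \Q^{n+2}$ with form $q$, I would model the commensurability class by $\Hyp^{n+1}/\Gamma$ with $\Gamma$ commensurable with $O(q;\Z)$. The cusps of such a manifold correspond to $\Gamma$-orbits of $\Q$-rational isotropic lines in $V$, and the fundamental group of a cusp cross-section arises as the image in the Levi quotient of the stabilizer of such an isotropic line.

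For necessity, suppose $B$ is a cusp cross-section and let $\ell = \Q v$ be the associated $\Q$-isotropic line. By Witt's theorem there is an isotropic $w\in V$ with $\langle v,w\rangle = 1$, so $\Q v \oplus \Q w$ is a hyperbolic plane isometric to $\langle 1,-1\rangle$, and its orthogonal complement carries a positive-definite rational form $f$ of rank $n$ satisfying $q \cong f \oplus \langle 1,-1\rangle$. The stabilizer of $\ell$ in $O(q;\Q)$ has Levi decomposition with reductive part $O(f;\Q) \times \Q^\times$ and translational unipotent radical isomorphic to the underlying space of $f$. Projecting the maximal parabolic subgroup cut out by the cusp into the $O(f;\Q)$ factor, and checking that this recovers the linear holonomy action of the Bieberbach group of $B$ on its translation lattice, yields the desired $\rho : H \to O(f;\Q)$.

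For sufficiency, given a holonomy representation $\rho : H \to O(f;\Q)$ with $q \cong f \oplus \langle 1,-1\rangle$, realize $B = \R^n/\pi_1(B)$ as a Bieberbach manifold with translation lattice $L \le \R^n$ preserved by $\rho(H)$. I would embed $\pi_1(B) = \rho(H) \ltimes L$ into the maximal parabolic subgroup of $O(q;\Q)$ stabilizing $\Q v$, placing $\rho(H)$ in the Levi through the identification of $O(f;\Q)$ with a block and $L$ in the unipotent radical of translations. This exhibits $B$ as the quotient of a horosphere by $\pi_1(B)$. Enlarging $\pi_1(B)$ via the Long-Reid/McReynolds mechanism to an arithmetic lattice commensurable with $O(q;\Z)$ and passing to a torsion-free finite-index subgroup then produces an arithmetic hyperbolic $(n+1)$-manifold in the commensurability class of $q$ containing $B$ as a cusp cross-section.

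The main obstacle lies in the sufficiency direction: one must ensure that the parabolic subgroup built from $\rho$ and $L$ extends to a lattice that is genuinely commensurable with $O(q;\Z)$, as opposed to the integer group of some other form in the same genus but a distinct rational equivalence class. This is exactly where the hypothesis that $f \oplus \langle 1,-1\rangle$ be $\Q$-equivalent to $q$ (and not merely locally equivalent) is essential, since over $\Q$ the commensurability class of arithmetic hyperbolic manifolds of simplest type is determined precisely by the rational equivalence class of the defining form. A secondary technical point is to guarantee that the produced cross-section is $B$ itself rather than a finite cover or quotient; this should be controllable by choosing $L$ inside a suitable integral structure on $f$ and taking a deep enough congruence cover, in the spirit of McReynolds's original argument.
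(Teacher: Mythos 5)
The heart of this theorem --- and the paper's stated main contribution --- is exactly the point you assert rather than prove: that the projection of the cusp group to the Levi factor lands in $O(f;\Q)$ and not merely in $O(f;\R)$. Your lattice is only assumed \emph{commensurable} with $O^+(q;\Z)$, so its elements, and in particular the elements of the cusp subgroup $\Gamma=\Pi\cap\Lambda_y$, need not be rational matrices; likewise the claim that the cusp sits over a $\Q$-rational isotropic line requires an argument for such a $\Pi$. Even once the cusp is moved to the standard rational point, one must show that the translation lattice of $\Gamma$ consists of rational vectors, so that the conjugation action of $\Gamma$ on it --- which is what produces the holonomy representation --- is by rational matrices conjugate to the integral holonomy representation. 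The paper does this by noting that the translations in $\Gamma$ are unipotent (Lemma~\ref{lem:translations_unipotent}), that commensurability puts some power of each translation in $O^+(q;\Z)$, and that a unipotent matrix with a rational power is itself rational (Lemma~\ref{lem:unipotent_rational}); rationality of the fixed point then lets one apply Lemma~\ref{lem:rational_transitivity} and Lemma~\ref{lem:holonomy_properties}(ii) to land in $O(f;\Q)$. Without this chain (or an appeal to a commensurator theorem identifying the commensurator of $O^+(q;\Z)$ with $O^+(q;\Q)$), your sentence ``projecting the maximal parabolic subgroup cut out by the cusp into the $O(f;\Q)$ factor'' is precisely the statement to be proved, so the necessity direction has a genuine gap.

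Two further points. In the sufficiency direction you write $\pi_1(B)=\rho(H)\ltimes L$; this is false whenever $H\neq 1$, since a splitting would force torsion in $\pi_1(B)$. The Bieberbach extension $1\to L\to\pi_1(B)\to H\to 1$ is non-split, and getting $\pi_1(B)$ inside $O^+(q;\Z)\ltimes\Z^n$ (so that McReynolds' separability applies) requires first conjugating $\rho$ into $O(f';\Z)$ for an integral form $f'$ rationally equivalent to $f$, and then the Long--Reid lifting argument (the paper's Lemma~\ref{lem:holonomy_lifting}): the lifts form a rational linear system, one perturbs a real lift to a rational one and clears denominators. Citing Long--Reid and McReynolds for this direction is legitimate, but the semidirect-product shortcut is not. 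Finally, your worry about ``the integer group of some other form in the same genus but a distinct rational equivalence class'' is vacuous: forms in the same genus have identical rational invariants and are therefore rationally equivalent; the relevant dichotomy is rational (indeed projective) equivalence versus inequivalence, which is how the paper phrases the commensurability criterion.
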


Suitably interpreted, the work of Long-Reid and McReynolds shows that if there is a holonomy representation of $B$ with image in $O(f;\Z)$, where $f$ is a positive-definite $n$-dimensional rational quadratic form, then $B$ appears as a cusp cross-section in the commensurability class of hyperbolic manifolds defined by the form $f\oplus \langle 1,-1 \rangle$. The main contribution of this paper is to show that if $B$ appears in the commensurability class of hyperbolic manifolds defined by a quadratic form $f\oplus \langle 1,-1 \rangle$, then $B$ has a holonomy representation taking values in the orthogonal group $O(f;\Q)$. In fact, our work applies to the broader class of quasi-arithmetic hyperbolic manifolds.
\newcommand{\quasiarithobstruction}{Let $ B $ be a compact flat $ n $-manifold and let $ f $ be a positive definite rational quadratic form. If $B$ appears as a cusp cross-section in a quasi-arithmetic manifold associated to the form $ f\oplus \langle 1, -1 \rangle $, then $B$ has a holonomy representation with image in $ O(f, \Q) $.}
\begin{theorem}\label{thm:quasi_arith_obstruction}
\quasiarithobstruction
\end{theorem}
Since properly quasi-arithmetic manifolds exist corresponding to any rational quadratic form $ q $ of rank $ \geq 4 $ \cite{Tho16}, Theorem~\ref{thm:quasi_arith_obstruction} yields obstructions to the occurrence of some flat $ n $-manifolds as a cusp cross-section in some non-arithmetic hyperbolic $ (n+1) $-manifolds for all $ n \geq 3 $.

From these results, we obtain obstructions to flat manifolds appearing as cusp cross-sections in commensurability classes of arithmetic hyperbolic manifolds in all dimensions by studying holonomy representations. Previously, such obstructions were known only for flat 3- and 4-manifolds. The second author classified the appearance of orientable flat 3-manifolds as cusp cross-sections in commensurability classes of arithmetic hyperbolic 4-manifolds and obtained obstructions for certain flat 4-manifolds to appear as cusp cross-sections \cite{Sell}.

For flat manifolds with holonomy group of odd order we can characterize precisely which ones appear as cusp cross-sections in every commensurability class of cusped arithmetic hyperbolic manifolds of the appropriate dimension.

\newcommand{\coroddholonomy}{Let $B$ be a flat $n$-manifold with holonomy group of odd order. Then $B$ appears as a cusp cross-section in every commensurability class of cusped hyperbolic arithmetic $(n+1)$-manifolds if and only if $b_1(B)\geq 3$. Moreover, if $b_1(B)\leq 2$, then there are infinitely many commensurability classes of hyperbolic manifolds that do not contain $B$ as a cusp cross-section.}
\begin{cor}\label{cor:odd_holonomy}
\coroddholonomy
\end{cor}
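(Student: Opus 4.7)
Write $H$ for the holonomy group of $B$ (of odd order) and $\rho\colon H\to GL_n(\Q)$ for the holonomy representation on $V=\Q^n$. By Theorem~\ref{thm:main_technical} combined with Witt cancellation, the commensurability classes of arithmetic hyperbolic $(n+1)$-manifolds of simplest type are in bijection with rational equivalence classes of positive definite $n$-dimensional rational quadratic forms $f$, via $q=f\oplus\langle 1,-1\rangle$; moreover $B$ appears in the class associated to $f$ if and only if the equivalence class of $f$ contains an $H$-invariant form on $V$. Since $|H|$ is odd, $V$ splits as $V^H\oplus W$ with $\dim V^H=b_1(B)=:k$ and $W^H=0$, and averaging provides an $H$-invariant positive definite form $f_1^{\ast}$ on $W$. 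Every $H$-invariant positive definite form on $V$ is then rationally equivalent to $f_0\oplus f_1$ for some positive definite $f_0$ on $V^H$ and some $H$-invariant positive definite $f_1$ on $W$; the question reduces to which positive definite $f$ admit a decomposition of this shape.

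For $k\geq 3$, the plan is to fix $f_1=f_1^\ast$ and solve for $f_0$. By Witt cancellation, finding $f_0$ positive definite of dimension $k$ with $f\cong f_0\oplus f_1^\ast$ amounts to prescribing the discriminant and local Hasse--Witt invariants of $f_0$ in terms of those of $f$ and $f_1^\ast$. Because $k\geq 3$, the Hasse--Minkowski classification of rational quadratic forms guarantees that any compatible assignment of these invariants (subject to positive definiteness and the product formula) is realized by a positive definite form of dimension $k$. Hence such $f_0$ exists and $B$ appears in every commensurability class.

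For $k\leq 2$, the goal is to show the set of equivalence classes $\{[f_0\oplus f_1]\}$ omits infinitely many classes of positive definite $n$-dimensional forms. When $k=0$, one has $f=f_1$, and the discriminant of any $H$-invariant positive definite form on $W$ is restricted to a proper union of cosets of $(\Q^\times)^2$ determined by the $\Q$-isotypic structure of $W$. When $k=1$, writing $f_0=\langle a\rangle$ gives $\epsilon_p(f_0\oplus f_1)=\epsilon_p(f_1)\,(a,\operatorname{disc}(f_1))_p$, which pins $\epsilon_p$ independently of $a$ at any prime $p$ where $\operatorname{disc}(f_1)$ is a local square. When $k=2$, the analogous Hilbert-symbol calculation with $f_0=\langle a,b\rangle$ imposes similar restrictions at suitably chosen primes. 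In each case one uses the Hasse--Minkowski existence theorem to produce infinitely many positive definite rational forms of dimension $n$ violating the restriction, giving infinitely many commensurability classes that do not contain $B$.

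The hard step is the $k\leq 2$ analysis: one must identify, from the $\Q[H]$-module structure of $W$ alone, a prime $p$ at which $\operatorname{disc}(f_1)$ and $\epsilon_p(f_1)$ are controlled uniformly as $f_1$ varies over all $H$-invariant positive definite forms on $W$. Natural candidates are primes governed by the cyclotomic characters appearing in the $\Q$-irreducible decomposition of $W$. Verifying that the resulting restriction on local invariants of $f_0\oplus f_1$ is proper for each of $b_1(B)\in\{0,1,2\}$ is the technical heart of the argument.
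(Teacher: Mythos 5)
Your reduction via Theorem~\ref{thm:main_technical} and the splitting $V=V^H\oplus W$ is sound, and your argument for $b_1(B)\geq 3$ is correct; it is in fact a slightly cleaner route than the paper's, which instead proves a more general statement (Theorem~\ref{thm:three_odd_irreps}, via the rescaling result Proposition~\ref{prop:all_forms}) for any three odd-dimensional rational subrepresentations, whereas you exploit that on the $\geq 3$-dimensional trivial summand \emph{every} positive definite form is invariant and then invoke Witt cancellation plus the Hasse--Minkowski existence theorem for rank $\geq 3$.

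However, for $b_1(B)\leq 2$ there is a genuine gap: the step you defer as ``the technical heart'' is precisely the content that makes the corollary true, and nothing in your outline supplies it. You need to show that $d(f_1)$ and $\epsilon_p(f_1)$ are controlled \emph{uniformly over all} $H$-invariant positive definite forms $f_1$ on $W$ at a suitable infinite set of primes. The mechanism in the paper (Proposition~\ref{prop:rational_complex_only} and Theorem~\ref{thm:odd_obstruction}) uses two inputs you never invoke: (1) the Frobenius--Schur indicator, which for a group of \emph{odd} order forces every non-trivial complex irreducible constituent of $W$ to be non-self-dual; and (2) Brauer's theorem that $\Q[\zeta_e]$ is a splitting field, so that over $\Q[\zeta_e]$ the invariant form must pair each irreducible summand with its dual and is therefore equivalent to $\langle 1,-1\rangle^{\oplus \dim W/2}$. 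Since $\Q[\zeta_e]\subseteq \Q_p$ for $p\equiv 1\bmod e$, this pins $d(f_1)$ to a square and $\epsilon_p(f_1)$ to $1$ in $\Q_p$ at all such primes, which is what your $k=0,1,2$ Hilbert-symbol computations require. Without the oddness-implies-complex-type input, real-type constituents could occur in $W$ and the invariant forms on them are essentially unconstrained, so your claimed restrictions would fail. A secondary but real issue: to conclude ``infinitely many commensurability classes,'' you must produce forms that are pairwise \emph{projectively} inequivalent (commensurability classes correspond to projective, not rational, equivalence classes of $q$, and when $n$ is odd the discriminant of $q$ is not even a projective invariant); the paper handles this by fixing $d(q)=-1$ and appealing to Proposition~\ref{prop:projective_invariants} to see that $\epsilon_p(q)$ at primes $p\equiv 1\bmod 4|H|$ is a projective invariant, a point absent from your plan.
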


Since there are flat manifolds with holonomy of odd order and $b_1(B)\leq 2$ in all dimensions, we obtain the following.

\newcommand{\corobstructionalldims}{For all $n\geq 3$, there exist infinitely many commensurability classes of non-compact hyperbolic $(n+1)$-manifolds that do not contain every flat $n$-manifold as a cusp cross-section.}
\begin{cor}\label{cor:obstruction_all_dims}
\corobstructionalldims
\end{cor}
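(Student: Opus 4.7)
The approach is to reduce immediately to Corollary~\ref{cor:odd_holonomy}. Specifically, if for a given $n \geq 3$ one can exhibit a flat $n$-manifold $B$ whose holonomy group has odd order and which satisfies $b_1(B) \leq 2$, then Corollary~\ref{cor:odd_holonomy} already produces infinitely many commensurability classes of non-compact arithmetic hyperbolic $(n+1)$-manifolds that do not contain $B$ as a cusp cross-section. Each such commensurability class then fails to contain every flat $n$-manifold, which is exactly the assertion of the corollary.

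So the task is to construct, for every $n \geq 3$, a flat $n$-manifold of odd holonomy with $b_1 \leq 2$. In dimension three, the flat 3-manifold with holonomy $\Z/3$ (sometimes called the tricosm) has $b_1 = 1$ and works directly. Taking its product with $S^1$ gives a flat 4-manifold with holonomy $\Z/3$ and $b_1 = 2$. For higher dimensions, I would turn to Bieberbach groups built from lattices in cyclotomic fields: for each odd prime $p$, the $\Z/p$-module $\Z[\zeta_p]$ has no nonzero fixed rational subspace, so a Bieberbach group whose translation lattice is a direct sum of copies of $\Z[\zeta_p]$ (with $\Z/p$ acting diagonally) yields a flat manifold of odd holonomy with $b_1 = 0$. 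Products of such higher-dimensional examples with further $S^1$- or tricosm-factors then give flat manifolds of odd holonomy with $b_1 \leq 2$ in the remaining dimensions.

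The main obstacle is precisely this existence statement: that flat manifolds of odd holonomy with $b_1 \leq 2$ are available in \emph{every} dimension $n \geq 3$, and not merely in dimensions of some special arithmetic form. Resolving it requires a small case analysis on $n \pmod{p-1}$ for appropriate odd primes $p$, choosing the number of cyclotomic summands so as to realize the desired dimension, and then verifying in each case that a torsion-free crystallographic extension realizing the prescribed holonomy representation exists. This last point is a cohomological computation involving $H^2(\Z/p;\Z[\zeta_p]^k)$ and is standard in the Bieberbach-group literature; it is the one substantive ingredient required beyond Corollary~\ref{cor:odd_holonomy}.
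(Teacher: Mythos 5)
Your reduction to Corollary~\ref{cor:odd_holonomy} is exactly the paper's strategy, and your dimension~3 and~4 examples (the $\frac13$-twist manifold and its product with $S^1$) are the same ones the paper uses. The gap is in your construction for higher dimensions. A Bieberbach group with holonomy $C_p$ and translation lattice $\Z[\zeta_p]^k$ (with $C_p$ acting diagonally by multiplication by $\zeta_p$) does not exist: by the periodicity of the cohomology of cyclic groups, $H^2(C_p;L)\cong L^{C_p}/N(L)$, and since the $\Q$-span of $\Z[\zeta_p]^k$ has no nonzero $C_p$-fixed vectors we get $L^{C_p}=0$, hence $H^2(C_p;\Z[\zeta_p]^k)=0$. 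So every crystallographic extension of this lattice by $C_p$ splits and therefore contains torsion; indeed, the same computation shows that \emph{every} flat manifold with holonomy $C_p$ has $b_1\geq 1$. Thus the "standard cohomological computation" you defer to actually rules out your proposed $b_1=0$ examples rather than producing them, and this is not a minor verification but the point where the argument breaks.

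Your fallback of taking products with $S^1$ or tricosm factors does not repair this: a product of $j$ tricosms and $\ell$ circles has dimension $3j+\ell$ and $b_1=j+\ell$, so keeping $b_1\leq 2$ confines you to finitely many dimensions. What the paper uses instead is the Auslander--Vasquez toral extension (Proposition~\ref{prop:toral_extension}): starting from the $\frac13$-twist manifold (holonomy $C_3$, $b_1=1$) and repeatedly extending by the irreducible two-dimensional rational representation of $C_3$, one gets flat manifolds with holonomy $C_3$ and $b_1=1$ in every odd dimension $\geq 3$, and taking a product with $S^1$ gives $b_1=2$ in every even dimension $\geq 4$. The essential feature is that toral extension enlarges an \emph{existing} Bieberbach group by a representation with no trivial summand, so torsion-freeness is automatic and $b_1$ does not grow --- precisely the existence statement your proposal leaves unestablished (and, in the form you propose it, false).
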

This complements the work of McReynolds who obtained obstructions to the appearance of cusp cross-sections for complex hyperbolic and quaternionic hyperbolic manifolds \cite{McReynolds2004peripheral}.

Moreover, it turns out that there are flat manifolds that occur as cusp cross-sections in precisely one commensurability class of arithmetic hyperbolic manifolds.

\newcommand{\coruniqueclass}{Let $B$ be a flat $n$-manifold with $b_1(B)=0$, $n\equiv 2 \bmod 4$ and holonomy group $H\cong (C_3)^k$ for some $k\geq 2$. Then $B$ appears as the as a cusp cross-section in a unique commensurability class of arithmetic hyperbolic $(n+1)$-manifolds. Moreover there are examples of such manifolds for all $n\geq 10$.}

\begin{cor}\label{cor:unique_class}
\coruniqueclass
\end{cor}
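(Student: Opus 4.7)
The plan is to apply Theorem~\ref{thm:main_technical}: commensurability classes of arithmetic hyperbolic $(n+1)$-manifolds of simplest type containing $B$ as a cusp correspond to $\Q$-similarity classes of positive definite rational quadratic forms $f$ of dimension $n$ such that the holonomy representation of $B$ lands in $O(f;\Q)$. Witt cancellation on the hyperbolic plane $\langle 1,-1\rangle$ reduces similarity of $f\oplus\langle 1,-1\rangle$ to similarity of $f$ itself, so uniqueness of the commensurability class amounts to showing that, for the given $B$, all such $f$ are $\Q$-similar.

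The first step is to analyze the rational representation $\rho\colon H = (C_3)^k\to GL_n(\Q)$. Its rational irreducibles are either trivial or, for each Galois orbit $\{\chi,\bar\chi\}$ of nontrivial characters (valued in $\Q(\zeta_3)$), a two-dimensional irreducible $V_\chi$ with endomorphism algebra $K=\Q(\zeta_3)$. The hypothesis $b_1(B)=0$ removes the trivial summand, so $\rho\cong\bigoplus_i V_{\chi_i}^{m_i}$ with $\sum_i m_i = n/2$. By Schur's lemma the isotypic components are orthogonal under any invariant form, and the invariant form on each $V_{\chi_i}^{m_i}$ corresponds to a positive definite hermitian form over $K$ of rank $m_i$. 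Diagonalizing, and using that the trace form on each $V_{\chi_i}$ is isomorphic to $\langle 2,6\rangle$ over $\Q$, one obtains $f\cong\bigoplus_k\langle 2c_k,6c_k\rangle$ for some positive rationals $c_k$ (a total of $n/2$ summands, regardless of how the $c_k$'s are distributed among the orbits).

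Next, I read off the $\Q$-invariants of $f$. The rank and signature are fixed, and a short calculation gives $\det(f)\equiv 12^{n/2}\equiv 3^{n/2}\equiv 3\pmod{(\Q^\times)^2}$ since $n/2$ is odd, so the discriminant is a fixed class independent of the choice of $f$. The Hasse--Witt invariant at a prime $p$ depends on the $c_k$ in a controlled way, and under the scaling $f\mapsto\lambda f$ the standard formula $s_p(\lambda f)=(\lambda,-1)_p^{\binom{n}{2}}(\lambda,\det f)_p^{n-1}\,s_p(f)$ simplifies, using that $\binom{n}{2}$ and $n-1$ are both odd when $n\equiv 2\bmod 4$, to $s_p(\lambda f)=(\lambda,-3)_p\,s_p(f)$. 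Given two invariant forms $f,f'$ with matching rank, signature, and discriminant, the local ratios $s_p(f')\,s_p(f)\in\{\pm 1\}$ form a global tuple whose product is $+1$ by the Hasse product formula; Hasse's norm theorem applied to the cyclic extension $\Q(\zeta_3)/\Q$ then produces a $\lambda\in\Q^\times$ realizing this tuple as $((\lambda,-3)_p)_p$ at every place simultaneously. Hence $\lambda f\cong f'$, which gives uniqueness of the $\Q$-similarity class and therefore of the commensurability class.

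The main technical obstacle is the Hilbert-symbol bookkeeping required to pin down the dependence of $s_p(f)$ on the $c_k$ when several orbits and multiplicities are mixed, and to verify that the scaling $\lambda$ produced by Hasse's norm theorem does align all local invariants at once; this is precisely where the congruence $n\equiv 2\bmod 4$ is essential. For the existence half, I would construct, for every $n\geq 10$ with $n\equiv 2\bmod 4$, an explicit flat $n$-manifold with holonomy an elementary abelian $3$-group and vanishing first Betti number, by combining a faithful integral $(C_3)^k$-representation on $\Z^n$ with no trivial summand with a suitable torsion-free extension $1\to\Z^n\to\Gamma\to(C_3)^k\to 1$ via Bieberbach's theory; the threshold $n\geq 10$ arises as the smallest dimension in which these cohomological and parity constraints can simultaneously be satisfied.
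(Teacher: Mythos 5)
Your overall strategy is the paper's: reduce, via Theorem~\ref{thm:main_technical} and Witt cancellation, to showing that all positive-definite rational forms preserved by the holonomy representation lie in one projective class, and your hermitian-form derivation of $f\cong\bigoplus_k\langle 2c_k,6c_k\rangle$ is a legitimate variant of the paper's Lemma~\ref{lem:dim2_reps}. But the uniqueness argument has a genuine gap at its decisive step. The product formula alone does not let you realize the tuple $\bigl(s_p(f')s_p(f)\bigr)_p$ as $\bigl((\lambda,-3)_p\bigr)_p$: a rational $\lambda$ with prescribed Hilbert symbols against $-3$ exists only if the prescribed signs are trivial at every place where $-3$ is a square (this is the hypothesis in Proposition~\ref{prop:Hilbert_symbol_positive}; ``Hasse's norm theorem'' is not the relevant statement). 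A tuple equal to $-1$ at exactly two primes $p\equiv 1\bmod 3$ has product $+1$ yet is not of the form $\bigl((\lambda,-3)_p\bigr)_p$. So you must prove that $\epsilon_p(f)=\epsilon_p(f')$ for every prime $p\equiv 1\bmod 3$; that is the heart of the matter, and it is precisely the computation you defer as ``the main technical obstacle.'' It is in fact immediate from your own decomposition: if $-3$ is a square in $\Q_p$ then $6c_k\equiv -2c_k$ in $\Q_p^\times/(\Q_p^\times)^2$, so each block $\langle 2c_k,6c_k\rangle$ is equivalent to $\langle 1,-1\rangle$ over $\Q_p$, hence $\epsilon_p(f)$ is independent of the $c_k$. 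This is exactly how the paper proceeds: Theorem~\ref{thm:Z3b1=0} pins down $d(q)=-3$ and $\epsilon_p(q)=1$ at all $p\equiv1\bmod 3$, and Proposition~\ref{prop:projective_invariants} (rank $n+2\equiv 0\bmod 4$) shows these are a complete set of projective invariants, giving a single class. As written, your step ``product $+1$ plus norm theorem implies $\lambda$ exists'' is false, and the omitted verification is the actual mathematical content.

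The existence half is also not proved. Saying you ``would construct'' a flat $n$-manifold with holonomy $(C_3)^k$ and $b_1=0$ by pairing a faithful integral representation with a suitable torsion-free extension is a plan, not an argument: producing a special (torsion-free) class in $H^2$ for a $(C_3)^k$-lattice with no rational trivial summand is not routine, and your appeal to unspecified ``cohomological and parity constraints'' to explain the threshold $n\geq 10$ establishes nothing. The paper instead cites a known $8$-dimensional flat manifold with holonomy $(C_3)^2$ and $b_1=0$ and applies toral extension (Proposition~\ref{prop:toral_extension}) with nontrivial $2$-dimensional summands to reach every even dimension at least $8$; the bound $n\geq 10$ is simply the smallest dimension congruent to $2$ modulo $4$ obtained this way.
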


A more complete analysis of flat manifolds with holonomy $(C_3)^k$ and $b_1(B)=0$ is given in Theorem~\ref{thm:Z3b1=0}. We also study other flat manifolds with holonomy groups of the form $(C_p)^k$ for $p$ an odd prime. 

\newcommand{\thmprimeanalysis}{Let $p$ be an odd prime and let $B$ be a flat $n$-manifold with $b_1(B)=0$ and holonomy group $H\cong (C_p)^k$ for some $k\geq 2$. If $B$ appears as a cusp cross-section in a quasi-arithmetic hyperbolic manifold corresponding to a quadratic form $q$, then
    \[d(q)=\begin{cases}
        -1 &\text{if $n\equiv 0\bmod 2(p-1)$}\\
        -p &\text{if $n\equiv p-1\bmod 2(p-1)$.}
    \end{cases}\]}
\begin{theorem}\label{thm:prime_analysis}
    \thmprimeanalysis
\end{theorem}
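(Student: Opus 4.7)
The plan is to apply Theorem~\ref{thm:main_technical} to exhibit a positive-definite rational form $f$ with $q \cong f \oplus \langle 1, -1 \rangle$, then compute $d(f)$ by analyzing the holonomy representation $\rho \colon H \to O(f;\Q)$. From multiplicativity of the discriminant, $d(q) = -d(f)$ in $\Q^\times / (\Q^\times)^2$, so the goal becomes proving $d(f) \equiv p^m \pmod{(\Q^\times)^2}$, where $m = n/(p-1)$. The two residues of $n \bmod 2(p-1)$ allowed by the representation theory correspond to $m$ being even or odd, giving $d(q) = -1$ or $-p$, respectively.

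The rational irreducibles of $H = (C_p)^k$ are the trivial one and the $(p^k - 1)/(p-1)$ distinct $(p-1)$-dimensional \emph{cyclotomic} representations obtained by pulling back $V = \Q(\zeta_p)$ along the index-$p$ quotients $H \twoheadrightarrow C_p$. The assumption $b_1(B) = \dim_\Q (\Q^n)^H = 0$ rules out any trivial summand, so $\rho$ decomposes with total multiplicity $m = n/(p-1)$ among the cyclotomic irreducibles. By Schur's lemma, distinct isotypic components are mutually orthogonal under any invariant symmetric bilinear form, so $d(f)$ factors as the product of the discriminants on each isotypic component.

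The key step is the single-copy case. The $H$-invariant bilinear forms on $V = \Q(\zeta_p)$ are parameterized by $\End_H(V) \cong \Q(\zeta_p)$, and the symmetric ones correspond to the real subfield $\Q(\zeta_p + \zeta_p^{-1})$; explicitly, they are the forms $B_\lambda(x,y) = \mathrm{Tr}_{\Q(\zeta_p)/\Q}(\lambda x \bar y)$ for $\lambda \in \Q(\zeta_p + \zeta_p^{-1})^\times$. Using the cyclotomic field discriminant $\mathrm{disc}(\Q(\zeta_p)/\Q) = (-1)^{(p-1)/2} p^{p-2}$, the determinant $(-1)^{(p-1)/2}$ of the $\Q$-linear involution $x \mapsto \bar{x}$ (whose $\pm 1$-eigenspaces are each $(p-1)/2$-dimensional), and the fact that $N_{\Q(\zeta_p)/\Q}(\lambda) = N_{\Q(\zeta_p + \zeta_p^{-1})/\Q}(\lambda)^2 \in (\Q^\times)^2$, I would compute $d(B_\lambda) \equiv p^{p-2} \equiv p \pmod{(\Q^\times)^2}$, since $p-2$ is odd. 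Crucially, this class is independent of $\lambda$.

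For an isotypic component $V^{m_\alpha}$, the $H$-invariant symmetric bilinear forms are parameterized by Hermitian matrices in $M_{m_\alpha}(\Q(\zeta_p))$, which diagonalize by Hermitian congruence over $\Q(\zeta_p)$. Since this congruence is a $\Q$-linear change of variables, the resulting $\Q$-bilinear form splits orthogonally into $m_\alpha$ summands of type $B_\lambda$, each contributing a factor of $p$ modulo squares, so the component has discriminant $p^{m_\alpha}$. Multiplying across all isotypic components gives $d(f) \equiv p^m$, and $d(q) = -d(f)$ produces the two cases of the theorem. The main obstacle is the single-copy discriminant calculation together with the verification that it is independent of $\lambda$ modulo squares; once this is in place, the multi-copy reduction via Hermitian diagonalization and the orthogonality across isotypic components are routine.
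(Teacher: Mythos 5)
Your argument is correct, and its overall skeleton matches the paper's: invoke Theorem~\ref{thm:main_technical} to write $q\cong f\oplus\langle 1,-1\rangle$ with a holonomy representation into $O(f;\Q)$, use $b_1(B)=0$ to see that $\rho$ is a sum of $m=n/(p-1)$ nontrivial $(p-1)$-dimensional rational irreducibles, show each such block forces discriminant $p$ modulo squares, and conclude $d(q)=-p^m$. Where you differ is in how the two computational ingredients are established. For the single-copy discriminant, the paper (Proposition~\ref{prop:prime_reps}) identifies $\End(\rho)\cong\Q[\zeta_p]$, uses Proposition~\ref{prop:endo_algebra} to show all invariant symmetric forms differ by multiplication by a totally real element whose norm is a square, and then evaluates one explicit Gram matrix of determinant $p$; you instead write every invariant symmetric form as the twisted trace form $\mathrm{Tr}(\lambda x\bar y)$ with $\lambda$ in the real subfield and compute its discriminant in closed form from $\mathrm{disc}(\Q(\zeta_p))=(-1)^{(p-1)/2}p^{p-2}$, the determinant $(-1)^{(p-1)/2}$ of the conjugation involution, and the fact that norms from the real subfield are squares. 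Both are valid; yours avoids exhibiting an explicit matrix, the paper's avoids the field-discriminant input. For the passage from one copy to an isotypic block you diagonalize a Hermitian matrix over $\Q(\zeta_p)$; this is correct, but note that the paper's Proposition~\ref{prop:rational_rep_decomp} (every nondegenerate form in $\calQ(\rho)$ splits as an orthogonal sum over the rational irreducible summands, cross-terms between distinct self-dual irreducibles vanishing by Lemma~\ref{lem:cross_terms}) already gives exactly this reduction and would let you skip the Hermitian-congruence step; conversely, your Hermitian viewpoint is more self-contained for this particular abelian holonomy group.
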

Using manifolds of the form given by Theorem~\ref{thm:prime_analysis}, we are able to obtain restrictions on the topology of quasi-arithmetic hyperbolic manifolds by showing that there are pairs of flat manifolds that can never appear as cusp cross-sections in the same quasi-arithmetic hyperbolic manifold.  In particular, such cross-sections can never occur together on an arithmetic hyperbolic manifold.

\newcommand{\cortopobstruction}{For all even $n\geq 36$ there exist pairs of flat $n$-manifolds $B_1, B_2$ such that if $B_1$ and $B_2$ both appear as cusp cross-sections of a hyperbolic $(n+1)$-manifold $M$, then $M$ is not a quasi-arithmetic manifold.}

\begin{cor}\label{cor:top_obstruction}
    \cortopobstruction
\end{cor}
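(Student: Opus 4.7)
The plan is to use Theorem~\ref{thm:prime_analysis} to construct pairs of flat manifolds whose forced commensurability-class invariants are incompatible, preventing their simultaneous occurrence as cusp cross-sections of the same arithmetic manifold.

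First, I would promote the discriminant $d(q)$ to a commensurability-class invariant. By Theorem~\ref{thm:main_technical}, a cusp cross-section $B \subset M$ in the class of a signature $(n+1,1)$ form $q$ requires $q = f \oplus \langle 1,-1\rangle$ for some rational form $f$ carrying a holonomy representation of $B$; by Witt cancellation, $f$ is determined by $q$ up to rational equivalence, so $d(q) \in \Q^\times/(\Q^\times)^2$ is an invariant of the commensurability class of $M$. In particular, if flat manifolds $B_1$ and $B_2$ both appear as cusp cross-sections of the same arithmetic $M$, they force the same value of $d(q)$.

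The core task is then to select, for each even $n \geq 36$, two odd primes $p_1, p_2$ with $(p_i - 1)\mid n$ and corresponding flat $n$-manifolds $B_i$ of holonomy $(C_{p_i})^{k_i}$ with $b_1(B_i) = 0$, so that the discriminants forced by Theorem~\ref{thm:prime_analysis} differ modulo squares. Taking $p_1 = 3$ is always available since $n$ is even: when $n \equiv 2 \pmod 4$ this forces $d(q_1) = -3$, so any odd prime $p_2 \geq 5$ with $(p_2-1)\mid n$ yields $d(q_2) = -p_2 \not\equiv -3 \pmod{(\Q^\times)^2}$; when $4 \mid n$, the prime $p_1 = 3$ forces $d(q_1) = -1$, and choosing $p_2$ so that $n/(p_2-1)$ is odd gives $d(q_2) = -p_2 \neq -1$. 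The existence of the required manifolds $B_i$ follows from Bieberbach-group constructions analogous to those used in Theorem~\ref{thm:Z3b1=0}, applied to the prime $p_2$ in place of $3$.

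With such a pair in hand, the conclusion is immediate: if both $B_1$ and $B_2$ appeared as cusp cross-sections of a common arithmetic hyperbolic $(n+1)$-manifold $M$, the discriminant of the form defining the commensurability class of $M$ would simultaneously equal two inequivalent elements of $\Q^\times/(\Q^\times)^2$, which is absurd; hence $M$ cannot be arithmetic. The main obstacle is the case-by-case number-theoretic verification that a suitable second prime $p_2$ exists for every even $n \geq 36$, together with the accompanying Bieberbach-group realizability for the prime $p_2$; the lower bound $n \geq 36$ is calibrated precisely so that this case analysis closes.
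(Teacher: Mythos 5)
Your overall strategy---forcing incompatible discriminants via Theorem~\ref{thm:prime_analysis} and concluding that no single commensurability class can host both cusp cross-sections---matches the paper's, and the final deduction is sound (for even $n$ the rank $n+2$ is even, so $d(q)$ is invariant under projective equivalence; in fact one may simply apply Theorem~\ref{thm:prime_analysis} to the single form $q$ defining the class of $M$, so the appeal to Witt cancellation is beside the point). The genuine gap is in the construction of the pair. You need, for every even $n\geq 36$, a second prime $p_2\geq 5$ with $(p_2-1)\mid n$ together with a flat $n$-manifold of holonomy $(C_{p_2})^{k_2}$ and $b_1=0$, and this case analysis cannot be made to close. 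For $n=38$ the even divisors of $n$ are $2$ and $38$, so the only candidates are $p_2=3$ and $p_2=39$: no prime $p_2\geq 5$ with $(p_2-1)\mid 38$ exists at all. Even when the divisibility works out, realizability fails: a flat manifold with cyclic holonomy $C_p$ always has $b_1\geq 1$, so one is forced to use $(C_{p_2})^k$ with $k\geq 2$, which already requires $n\geq 2(p_2-1)$ for faithfulness, and the constructions the paper relies on (Hiller's Proposition~3.3 plus toral extension) only produce dimensions $p_2^2-1+j(p_2-1)$. Thus, for example, $n=46$ (only candidate $p_2=47$) and $n=58$ (only candidate $p_2=59$) are also out of reach, so the lower bound $36$ cannot be ``calibrated'' to rescue the argument.

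The idea missing from your proposal, and the one the paper uses to get all even $n\geq 36$, is a product construction. The paper first proves a lemma: if $B_1$ and $B_2$ can only appear in classes with discriminants $d_1$ and $d_2$ respectively, then $B_1\times B_2$ can only appear in classes with discriminant $-d_1d_2$. It then fixes a $28$-dimensional manifold $B$ with holonomy $(C_5)^2$ and $b_1=0$ (forcing $d(q)=-5$) and takes the family $D_k$ of $(8+2k)$-dimensional manifolds with holonomy $(C_3)^2$ and $b_1=0$, which exist in every even dimension $\geq 8$ and force $d(q)\in\{-1,-3\}$. The pair in dimension $n=36+2k$ is then the pure $(C_3)^2$-manifold of dimension $n$, forcing $d(q)\in\{-1,-3\}$, against $B\times D_k$, forcing $d(q)\in\{-5,-15\}$; these sets are disjoint in $\Q^\times/(\Q^\times)^2$, giving the corollary in every even dimension $n\geq 36$. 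Without this (or some comparable device for combining holonomy groups), your construction only covers the even $n$ that happen to admit a suitable $p_2$, which is a proper subset of the required range.
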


In \cite{Sell}, the second author used quaternion algebras to classify the occurrence of orientable flat 3-manifolds as cusp cross-sections of arithmetic hyperbolic 4-manifolds. One can easily recover this classification using Theorem~\ref{thm:main_technical} (see Theorem~\ref{thm:3-dim_analysis}). Moreover we are able to extend the analysis to classify the occurrence of the orientable flat 4-manifolds as cusp cross-sections in arithmetic commensurability classes. The obstructions to occurring in the commensurability class defined by a form $q$ are couched in terms of the discriminant $d(q)$ and the Hasse-Witt invariants $\epsilon_p(q)$. Here $\Q_p$ denotes the $p$-adic field. 
\newcommand{\thmfourdim}{Let $ B $ be a compact, orientable, flat 4-manifold with holonomy group $H$ and let $q$ be a rational quadratic form of signature $(5,1)$. Then $ B $ appears as a cusp cross-section in the commensurability class defined by $q$ if and only if one of the following conditions is satisfied:
    \begin{enumerate}[(i)] 
        \item $H$ is $C_1$, $C_2$ or $(C_2)^2$,
        \item $H$ is $C_4$ or $D_8$ and $ \epsilon_p(q) = 1 $ for all primes $p$ such that $ p \equiv 1\bmod 4$ and $-d(q)$ is a square in $\Q_p$,
        \item $H$ is $C_3$, $C_6$, $D_6$ or $D_{12}$ and $ \epsilon_p(q) = 1 $ for all primes $p$ such that $ p \equiv 1 \mod 3$ and $-d(q)$ is a square in $\Q_p$, or
        \item $H$ is $A_4$, and $ \epsilon_p(q) = 1 $ for all primes $p$ such that $-d(q)$ is a square in $\Q_p$.
    \end{enumerate}}

\begin{theorem}\label{thm:4-dim_analysis}
\thmfourdim
\end{theorem}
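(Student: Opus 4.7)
The plan is to apply Theorem~\ref{thm:main_technical} and reduce the question to a local-global analysis of rational quadratic forms preserved by the possible holonomy representations of $B$. By that theorem, $B$ appears in the commensurability class of $q$ iff there exists a rational positive-definite 4-dimensional form $f$ with $f\oplus\langle 1,-1\rangle$ equivalent to $q$ and a holonomy representation of $B$ whose image lies in $O(f;\Q)$. Witt cancellation uniquely determines $f$ up to equivalence via the invariants $d(f)=-d(q)$ and $\epsilon_p(f)=\epsilon_p(q)\cdot(-d(q),-1)_p$ at every prime $p$. So the task becomes: for each of the ten holonomy groups $H$ listed, which combinations of invariants $(d,\{\epsilon_p\})$ occur on positive-definite $\rho$-invariant 4-forms, where $\rho$ is a rational holonomy representation of an orientable flat 4-manifold with holonomy $H$?

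The approach is to use the classification of orientable flat 4-manifolds to enumerate, for each $H$, the rational holonomy representations up to conjugacy, and for each such $\rho$ describe the space of invariant positive-definite forms. Decomposing $\rho$ into $\Q$-isotypic components $\bigoplus V_i^{n_i}$, the invariant forms split as orthogonal sums across the distinct isotypics, with each component carrying a Hermitian form over the Schur algebra $D_i=\End_{\Q[H]}(V_i)$. In case (i), for $H\in\{C_1,C_2,(C_2)^2\}$, every irreducible constituent is 1-dimensional over $\Q$ with $D_i=\Q$, so any positive-definite 4-form is invariant under a suitable orthogonal decomposition, and $B$ appears in every commensurability class regardless of $q$.

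In cases (ii) and (iii), faithfulness of $\rho$ forces at least one irreducible constituent to have Schur algebra equal to the imaginary quadratic field $K=\Q(i)$ (case (ii)) or $K=\Q(\sqrt{-3})$ (case (iii)). Invariant forms on a $K$-isotypic block are restrictions of scalars of $K$-Hermitian forms, and the key local observation is that at a prime $p$ where $K_p$ splits -- that is, $p\equiv 1\bmod 4$ or $p\equiv 1\bmod 3$ respectively -- such a restricted form over $\Q_p$ is a direct sum of hyperbolic planes, forcing $\epsilon_p$ to be trivial on the $K$-block. Coupled with the invariants of the complementary orthogonal block (which carries a binary form with essentially unconstrained invariants), the condition that $-d(q)$ is a square in $\Q_p$ is precisely the condition under which the Hasse-Witt constraint on the $K$-block propagates to a nontrivial constraint $\epsilon_p(q)=1$, rather than being absorbed by the complementary block's freedom. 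For case (iv), the unique faithful rational 4-dimensional representation realized by an orientable flat 4-manifold with holonomy $A_4$ is $\mathbf{1}\oplus V_{\mathrm{std}}$, where $V_{\mathrm{std}}$ is the 3-dimensional standard representation of $A_4\subset S_4$. Since $V_{\mathrm{std}}$ is absolutely $\Q$-irreducible with $D=\Q$, the invariant form on $V_{\mathrm{std}}$ is unique up to scaling, and a direct computation (using the identity $V_{\mathrm{std}}\oplus\mathbf{1}\cong\Q^4$ with the standard Euclidean form) shows it has square discriminant and trivial Hasse-Witt invariants at every prime. This yields the analogous constraint at every prime, rather than only at split primes, giving the stated form of condition (iv).

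The main technical obstacle is the explicit Hasse-Witt computation for restriction-of-scalars Hermitian forms in cases (ii) and (iii), and verifying both necessity and sufficiency of the stated conditions. Necessity follows from the local analysis outlined above; sufficiency requires, given $q$ satisfying the stated condition, producing a positive-definite $f$ of the prescribed invariants together with a $\rho$-invariant identification. This can be achieved via Hasse-Minkowski to realize $(d(f),\{\epsilon_p(f)\})$ as a rational form, followed by an explicit construction showing that any such form arises as the restriction of scalars of a suitable Hermitian form over $K$ combined with a complementary binary form of the correct residual invariants.
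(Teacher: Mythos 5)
Your overall strategy is the same as the paper's: reduce via Theorem~\ref{thm:main_technical} to classifying the positive-definite rational forms preserved by a holonomy representation, decompose the representation over $\Q$ (using Proposition~\ref{prop:rational_rep_decomp}), and then do a local computation of discriminants and Hasse--Witt invariants. However, there are two genuine problems. First, your structural claim in cases (ii) and (iii) --- that faithfulness forces an irreducible constituent with Schur algebra $\Q(i)$, respectively $\Q(\sqrt{-3})$, so that the invariant form on that block is a restriction of scalars of a Hermitian form over that field --- is false for the dihedral holonomy groups $D_8$, $D_6$ and $D_{12}$. Their faithful $2$-dimensional rational constituents are absolutely irreducible, so the endomorphism algebra is $\Q$, not an imaginary quadratic field, and the Hermitian-transfer framework does not literally apply to them. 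The conclusion you want is still true (the invariant binary form is $\langle a,a\rangle$, resp.\ $\langle 3a,a\rangle$, hence hyperbolic over $\Q_p$ at the split primes), but the correct justification is the one the paper isolates in Lemma~\ref{lem:dim2_reps}: any $2$-dimensional rational representation whose image contains an element of order $4$ (resp.\ $3$) only preserves forms of that shape, irrespective of the Schur algebra. As written, your argument covers $C_4$, $C_3$, $C_6$ but leaves the dihedral cases unproved.

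Second, the sufficiency direction is asserted rather than proved. Knowing that the obstruction vanishes for $q$ does not by itself produce positive rationals $a,b,c$ with $q\cong \langle a,a,b,c,1,-1\rangle$ (or $\langle 3a,a,b,c,1,-1\rangle$, or $\langle a,a,a,b,1,-1\rangle$); Hasse--Minkowski only certifies equivalence of two forms you already have, while here one must construct a member of the constrained family realizing the prescribed discriminant and Hasse--Witt invariants at \emph{all} primes, subject to positivity and the global product formula. This is exactly where the paper does real work: it applies Proposition~\ref{prop:Hilbert_symbol_positive} (existence of a positive rational $c$ with prescribed Hilbert symbols $(d,c)_p$, possible precisely because the prescribed signs are $+1$ wherever $d$ is a local square and violate no product-formula constraint) --- twice in the $C_4/D_8$ and $C_3$-type cases --- to choose the free scaling parameters. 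Your phrase ``followed by an explicit construction showing that any such form arises as the restriction of scalars of a suitable Hermitian form over $K$ combined with a complementary binary form of the correct residual invariants'' is precisely the statement that needs proof, and it is the half of the theorem that justifies the exact form of conditions (ii)--(iv), so the proposal as it stands establishes (at best) necessity for a subset of the holonomy groups.
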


It is interesting to note that for flat 3- and 4-manifolds the obstructions to a given flat manifold $B$ appearing in a commensurability class depend only on the isomorphism type of the holonomy group $H$. This phenomenon does not persist into higher dimensions where the additional data of the holonomy representation is needed to determine if a flat manifold appears in a given commensurability class. For example, there are flat 5-manifolds $B$ and $B'$ both with holonomy group $C_3$ such that $b_1(B)=1$ and $b_1(B')=3$. Corollary~\ref{cor:odd_holonomy} shows that $B'$ appears as a cusp cross-section in every commensurability class of arithmetic manifolds, whereas $B$ does not.

\subsection*{Organization} In Section~\ref{s:quadratic_forms} we give a brief introduction to quadratic forms, and in Section~\ref{s:flat_mflds} we discuss properties of flat manifolds and the their holonomy representations. In Section~\ref{s:arith_mflds}, we give discuss arithmetic hyperbolic manifolds and prove Theorem~\ref{thm:main_technical}.  In Section~\ref{s:more_quadratic_forms}, we give further details on quadratic forms, and in Section~\ref{s:rep_theory}, we discuss some representation theory of finite groups. This further algebraic background allows us to prove the more tangible applications of Theorem~\ref{thm:main_technical} in Section~\ref{s:applications}.

\subsection*{Acknowledgements}
The first author would like to thank Steve Boyer and Michael Albanese for many helpful conversations.  The second author would like to thank Alan Reid for many helpful conversations and for introducing him to the problem.
DM is supported by an NSERC Discovery grant and a Canada Research Chair.

\section{Generalities on quadratic forms and orthogonal groups}
\label{s:quadratic_forms}
Let $\F$ be a field of characteristic zero and for a symmetric matrix $Q\in GL_n(\F)$ let $q$ be the non-degenerate quadratic form defined by 
\begin{align*}
    q:\F^n &\rightarrow \F\\
    v&\mapsto v^T Q v.
\end{align*}
The orthogonal group of $q$ is the matrix group
\[ 
O(q)=O(q;\F)=\{A\in GL_n(\F) \mid A^T Q A =Q\}.
\]
Given a subring $R\leq \F$ we set 
\[O(q;R)=O(q;\F)\cap GL_n(R).\]
Two quadratic forms $q_1$ and $q_2$ are equivalent over $ \F $ if there exists a matrix $C\in GL_n(\F)$ such that
\[
\text{$q_1(v)=q_2(Cv)$ for all $v\in \F^n$.}
\]
When $ q_1 $ and $ q_2 $ are equivalent over $ \mathbb{Q} $, we refer to them as \textit{rationally equivalent}.  Since we are assuming that $\Char \F=0$, every quadratic form over $\F$ is equivalent to a diagonal quadratic form. Throughout this article we use $\langle a_1, \dots, a_n \rangle$ to denote the diagonal quadratic form defined by the diagonal matrix with diagonal entries given by the $a_i$.

In general, equivalent forms have conjugate orthogonal groups.
\begin{prop}\label{prop:equivalence_implies_conjugation}
If $C\in GL_n(\F)$ is a matrix such that $q_1(v)=q_2(Cv)$ for all $v\in \F^n$, then 
\[
CO(q_1)C^{-1}=O(q_2).
\]
\end{prop}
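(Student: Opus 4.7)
The plan is to translate the hypothesis into a concrete matrix identity and then verify the inclusion $CO(q_1)C^{-1} \subseteq O(q_2)$ by a direct computation, with the reverse inclusion following by symmetry.

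First I would unpack the hypothesis. Write $Q_1, Q_2$ for the symmetric matrices defining $q_1, q_2$. The equality $q_1(v) = q_2(Cv)$ for all $v \in \F^n$ says $v^T Q_1 v = v^T (C^T Q_2 C) v$ for all $v$. Since $Q_1$ and $C^T Q_2 C$ are both symmetric and $\Char \F = 0$, the polarization identity (or equivalently the fact that a symmetric matrix is determined by its associated quadratic form in characteristic zero) forces
\[
Q_1 = C^T Q_2 C.
\]
In particular $C$ is invertible (which is given), and $Q_1$, $Q_2$ are congruent.

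Next I would check the inclusion $CO(q_1)C^{-1} \subseteq O(q_2)$. Let $A \in O(q_1)$, so $A^T Q_1 A = Q_1$. Substituting $Q_1 = C^T Q_2 C$ and computing:
\[
(CAC^{-1})^T Q_2 (CAC^{-1}) = C^{-T} A^T C^T Q_2 C A C^{-1} = C^{-T} (A^T Q_1 A) C^{-1} = C^{-T} Q_1 C^{-1} = Q_2,
\]
where the last equality uses $Q_1 = C^T Q_2 C$ again. Hence $CAC^{-1} \in O(q_2)$.

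For the reverse inclusion I would simply observe that the hypothesis is symmetric in $q_1$ and $q_2$: writing $w = Cv$, we have $q_2(w) = q_1(C^{-1}w)$ for all $w \in \F^n$, so the same argument applied with $C^{-1}$ in place of $C$ gives $C^{-1}O(q_2)C \subseteq O(q_1)$, i.e.\ $O(q_2) \subseteq CO(q_1)C^{-1}$. There is essentially no obstacle here; the entire content is the congruence identity $Q_1 = C^T Q_2 C$ and a short matrix manipulation.
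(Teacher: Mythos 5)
Your proof is correct and follows essentially the same route as the paper: a direct verification that conjugation by $C$ carries $O(q_1)$ into $O(q_2)$, with the reverse inclusion obtained by symmetry. The only cosmetic difference is that you work with the matrix congruence $Q_1 = C^T Q_2 C$ (via polarization), whereas the paper evaluates the quadratic forms directly on vectors; both amount to the same computation.
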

\begin{proof}
    For $A\in O(q_1)$ and arbitrary $v\in \F^n$ we have that
    \[q_2(CAC^{-1}v)=q_1(AC^{-1}v)=q_1(C^{-1}v)=q_2(v).\]
    Thus $CAC^{-1}\in O(q_2)$ and we deduce that $CO(q_1)C^{-1}\leq O(q_2)$. The reverse inequality is easily obtained in a similar fashion.
\end{proof}
Given an $m\in \F$ and a quadratic form $q$, we use $mq$ to denote the form defined by rescaling by $m$. Note that if $m\neq 0$, then we have $ O(q) = O(mq) $. For forms defined over $\Q$, rescaling by $m\in \Q_{>0}$ leads to the notion of projective equivalence of forms.  
\begin{defn}[Projective equivalence]
Two quadratic forms $ f $ and $ g $ over $ \mathbb{Q} $ are \emph{projectively equivalent (over $ \mathbb{Q} $)} if there exists positive $ m \in \mathbb{Q}_{>0} $ such that $ mf $ and $ g $ are rationally equivalent.
\end{defn}
Projective equivalence will be further discussed in Section~\ref{s:proj_equiv}.

\section{Flat manifolds}
\label{s:flat_mflds}
Throughout this section we consider the semi-direct product $GL_n(\R)\ltimes \R^n$ as the group of  affine transformations of $\R^n$ in the obvious way. The subgroup $I\ltimes \R^n$, which we will simply refer to as $\R^n$, is the subgroup acting by translations. There is a split short exact sequence
\[
1\longrightarrow \R^n \longrightarrow GL_n(\R)\ltimes \R^n \overset{p}{\longrightarrow} GL_n(\R) \longrightarrow 1,
\]
where the map $p$ takes the affine transformation $x\mapsto Ax+ \mu$ to the matrix $A$.

Let $B$ a compact flat $n$-manifold with fundamental group $\Gamma$. Then $\Gamma$ is a Bieberbach group, that is, $\Gamma$ is a torsion-free group that embeds as a discrete subgroup of $O(n)\ltimes \R^n$. The third Bieberbach theorem implies that all such embeddings are conjugate in $GL_n(\R)\ltimes \R^n$. Given such an embedding the subgroup $T\leq \Gamma$ acting by translations on $\R^n$ is a normal abelian subgroup and the first Bieberbach theorem implies that $T$ is a maximal abelian subgroup of finite index and isomorphic to $\Z^n$. Thus we get a short exact sequence
\[
1\longrightarrow T \longrightarrow \Gamma \overset{p}{\longrightarrow} H \longrightarrow 1,
\]
where the finite group $H$ is the \emph{holonomy group} of $B$. Fixing an identification of $T$ with $\Z^n$, we obtain a faithful representation
\[
\rho_\mathrm{hol}: H\rightarrow GL_n(\Z),
\]
such that for $h\in H$, $\rho_\mathrm{hol}(h)$ is the matrix such that
\begin{equation}\label{eq:holonomy_equation}
\rho_\mathrm{hol}(h) t= gtg^{-1} \quad\text{for all $t\in T$ and all $g\in p^{-1}(h)$}.
\end{equation}
Since changing the identification of $T$ with $\Z^n$ changes the representation $\rho_\mathrm{hol}$ by conjugation in $GL_n(\Z)$, this construction defines a unique conjugacy class of representations for $H$.
\begin{defn}[Holonomy representation]
    Given a compact flat manifold $B$ with holonomy group $H$, a \emph{holonomy representation} for $B$ is any representation
    \[
    \rho: H\rightarrow GL_n(\R)
    \]
    conjugate to $\rho_\mathrm{hol}:H\rightarrow GL_n(\Z)$.
\end{defn}
Next we seek an alternative characterization of holonomy representations.
\begin{lemma}\label{lem:holonomy_properties}
    Let $\overline{\rho}: \Gamma \rightarrow O(f;\R)\ltimes \R^n$ be an injective homomorphism with discrete image, where $f$ is a positive-definite form of rank $n$ and $\Gamma$ is the fundamental group of a flat compact $n$-manifold $B$. Then there is a unique representation $\rho: H\rightarrow O(f;\R)$ such that the following diagram commutes
    \begin{equation}\label{eq:commuting_hol_square}
        \begin{tikzcd}
\Gamma \arrow{r}{\overline{\rho}}\arrow{d}{p} & O(f;\R)\ltimes \R^n \arrow{d}{p} \\
H \arrow{r}{\rho} & O(f;\R).
\end{tikzcd}
\end{equation}
Furthermore we have that
\begin{enumerate}[(i)]
    \item $\rho$ is a holonomy representation for $B$ and
    \item if $\overline{\rho}(T)\leq \Q^n$, then the image of $\rho$ is contained in $O(f;\Q)$.
\end{enumerate}
\end{lemma}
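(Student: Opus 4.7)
The crux is to establish that $\overline{\rho}(T) \subset \R^n$, because once this is in hand the uniqueness and existence of $\rho$ are immediate: the composition $p\circ\overline{\rho}:\Gamma\to O(f;\R)$ kills $T$, so it factors uniquely through $H$, and the resulting $\rho$ makes the square \eqref{eq:commuting_hol_square} commute by construction (uniqueness coming from the surjectivity of $p$).

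To get $\overline{\rho}(T)\subset\R^n$, I plan to invoke the third Bieberbach theorem. Let $\Phi:\Gamma\hookrightarrow O(n)\ltimes\R^n$ be a standard Bieberbach embedding, chosen so that $\Phi(T)=\Z^n$ and the linear part of $\Phi$ recovers $\rho_\mathrm{hol}$. Because $f$ is positive definite, an affine change of coordinates conjugates $O(f;\R)\ltimes\R^n$ into $O(n)\ltimes\R^n$, so $\overline{\rho}(\Gamma)$ is a discrete (and, as I will argue, cocompact) subgroup of a Euclidean isometry group. The third Bieberbach theorem then produces $g=(C,u)\in\aff(\R^n)$ such that $\Phi(\gamma)=g\,\overline{\rho}(\gamma)\,g^{-1}$ for every $\gamma\in\Gamma$. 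A short direct computation gives $g^{-1}(I,\tau)g=(I,C^{-1}\tau)$, so applied to $\tau\in\Phi(T)=\Z^n$ this yields $\overline{\rho}(T)=C^{-1}\Z^n\subset\R^n$, as required.

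Properties (i) and (ii) then fall out of the same conjugation. Tracking the linear parts in $\Phi(\gamma)=g\,\overline{\rho}(\gamma)\,g^{-1}$ gives $\rho_\mathrm{hol}(h)=C\rho(h)C^{-1}$, so $\rho$ is conjugate to $\rho_\mathrm{hol}$ in $GL_n(\R)$ and is therefore a holonomy representation. For (ii), if $\overline{\rho}(T)\subset\Q^n$, then $C^{-1}\Z^n\subset\Q^n$ forces the columns of $C^{-1}$ to be rational, hence $C\in GL_n(\Q)$; since $\rho_\mathrm{hol}$ already takes values in $GL_n(\Z)$, we obtain $\rho(h)=C^{-1}\rho_\mathrm{hol}(h)C\in GL_n(\Q)$, which together with $\rho(h)\in O(f;\R)$ gives $\rho(h)\in O(f;\Q)$.

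The main obstacle is verifying the hypotheses of the third Bieberbach theorem, in particular that $\overline{\rho}(\Gamma)$ is cocompact in $O(f;\R)\ltimes\R^n$ rather than merely discrete. I expect this to follow cleanly from the principle that a discrete subgroup of $\Isom(\R^n)$ containing a finite-index abelian subgroup of maximal rank $n$ is automatically crystallographic, but it is the only non-formal ingredient in an otherwise mechanical argument.
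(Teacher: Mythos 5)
Your proof is correct and follows essentially the same route as the paper: reduce to the Euclidean case using positive definiteness of $f$, use Bieberbach theory to see that $\overline{\rho}(T)$ is a lattice of translations (so that $p\circ\overline{\rho}$ descends to $\rho$), and produce a matrix conjugating $\rho$ to $\rho_\mathrm{hol}$, with rationality of that matrix when $\overline{\rho}(T)\leq\Q^n$ giving part (ii). The only cosmetic difference is that you invoke the affine rigidity (third Bieberbach) theorem to obtain the conjugating matrix $C$ all at once, whereas the paper extracts $M$ directly from the lattice $\overline{\rho}(T)$ and verifies $\rho_\mathrm{hol}=M\rho M^{-1}$ by hand; the cocompactness point you flag as the one non-formal ingredient is likewise asserted without proof in the paper (``the image is a Bieberbach group''), so you are not missing anything it supplies.
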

\begin{proof}
     Since $f$ is equivalent to the standard positive definite form of rank $n$ over $\R$, we may conjugate $\overline{\rho}$ by an element of $GL_n(\R)$ to obtain a homomorphism whose image is a Bieberbach group in $O(n)\ltimes \R^n$. In particular, this implies that $\overline{\rho}(T)$ forms a lattice in $I\ltimes \R^n$.
     
     Since $\overline{\rho}(T)$ is contained in the kernel of $p$, it follows that $\overline{\rho}$ descends to a unique $\rho : H\rightarrow O(f;\R)$ making \eqref{eq:commuting_hol_square}. Explicitly, for any $h\in H$ and any $g\in p^{-1}(h)$, we have that $\overline{\rho}(g)$ is the affine transformation of the form $x\mapsto \rho(h)x+ \mu$ for some $\mu\in \R^n$. On the other hand, for $t\in T$, we have that $\overline{\rho}(t)$ is a translation of the form $x \mapsto x+\lambda$ for some $\lambda\in \R^n$. A direct calculation shows that $\overline{\rho}(gtg^{-1})$ is the translation $x \mapsto x+ \rho(h)\mu$. Thus we see that $\rho(h)$ satisfies
    \begin{equation}\label{eq:induced_formula}
    \rho(h) \overline{\rho}(t)= \overline{\rho}(gtg^{-1}) \quad\text{for all $t\in T$ and all $g\in p^{-1}(h)$}.
    \end{equation}
    Since $\overline{\rho}(T)$ forms a lattice in $I\ltimes \R^n$, there exists a matrix $M\in GL_n(\R)$ such that $M\overline{\rho}(T)=\Z^n$. Thus we obtain an identification of $T$ with $\Z^n$ via the map $t \mapsto M\overline{\rho}(t)$.

    Using \eqref{eq:induced_formula} and computing $\rho_\mathrm{hol}$ as in \eqref{eq:holonomy_equation} using this identification shows that for any $h\in H$, we have that 
    \[
    \rho_\mathrm{hol}(h) M\rho(t)=M\rho(gtg^{-1}) = M\overline{\rho}(h)\rho(t) \quad \text{for all $t\in T$},
    \]
    where $g$ appearing in the intermediate step is an arbitrary element of $p^{-1}(h)$. It follows that $\rho_\mathrm{hol}=M\rho M^{-1}$. Thus $\rho$ is a holonomy representation for $B$.

    If $\overline{\rho}(T)$ is contained in $\Q^n$, then the matrix $M$ chosen above will be in $GL_n(\Q)$. So in this case $\rho$ is conjugate to $\rho_\mathrm{hol}$ by a matrix in $GL_n(\Q)$.
\end{proof}
In fact, we have a converse to Lemma~\ref{lem:holonomy_properties} in the sense that a holonomy representation can always be lifted to an embedding of $\Gamma$ in $GL_n(\R)\ltimes \R^n$
\begin{lemma}\label{lem:lifting_to_R}
    Let $\rho : H \rightarrow GL_n(\R)$ be a holonomy representation for a compact flat $n$-manifold with fundamental group $\Gamma$. Then there exists an injective homomorphism $\overline{\rho}$ with discrete image such that the following diagram commutes 
    \begin{equation*}
        \begin{tikzcd}
\Gamma \arrow{r}{\overline{\rho}}\arrow{d}{p} & GL_n(\R)\ltimes \R^n \arrow{d}{p} \\
H \arrow{r}{\rho} & GL_n(\R).
\end{tikzcd}
\end{equation*}
\end{lemma}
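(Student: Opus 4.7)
The plan is to construct $\overline{\rho}$ by starting from a discrete Bieberbach embedding of $\Gamma$ and conjugating it by a suitable affine transformation so that the induced representation of $H$ becomes exactly $\rho$. The key observation is that conjugation inside $GL_n(\R)\ltimes \R^n$ preserves injectivity and discreteness, so once one obtains a single lift of $\rho_\mathrm{hol}$, one can transport it to a lift of any conjugate representation.

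First, I would invoke the third Bieberbach theorem to obtain a discrete, injective homomorphism $\iota\colon\Gamma\hookrightarrow O(n)\ltimes \R^n$. The translation subgroup $\iota(T)$ lies in $I\ltimes \R^n\cong \R^n$ as a lattice, so after choosing an appropriate basis there exists $L\in GL_n(\R)$ such that conjugating $\iota$ by $(L,0)\in GL_n(\R)\ltimes \R^n$ yields an injective homomorphism $\iota'=L\iota L^{-1}\colon\Gamma\to GL_n(\R)\ltimes \R^n$ with discrete image whose translation subgroup is exactly $I\ltimes \Z^n$. Because conjugating the affine transformation $(A,\mu)$ by $L$ yields $(LAL^{-1},L\mu)$, a direct calculation of $\iota'(g)\iota'(t)\iota'(g)^{-1}$ for $t\in T$ and $g\in p^{-1}(h)$ shows that the representation $H\to GL_n(\R)$ through which $p\circ\iota'$ factors is exactly the holonomy representation $\rho_\mathrm{hol}\colon H\to GL_n(\Z)$ determined by the chosen basis (this is the content of \eqref{eq:holonomy_equation}).

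Next, by the definition of a holonomy representation there is some $M\in GL_n(\R)$ with $\rho=M\rho_\mathrm{hol}M^{-1}$. I would then define
\[
\overline{\rho}(\gamma)=(M,0)\,\iota'(\gamma)\,(M,0)^{-1}\qquad\text{for all }\gamma\in\Gamma.
\]
Injectivity is immediate, discreteness follows because conjugation by $(M,0)$ is a self-homeomorphism of the topological group $GL_n(\R)\ltimes \R^n$, and the same conjugation formula gives $p\circ\overline{\rho}=M\rho_\mathrm{hol}M^{-1}\circ p=\rho\circ p$, so the required diagram commutes.

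I do not expect any substantial obstacle, since the argument amounts to unwinding the definition of $\rho_\mathrm{hol}$ and performing two successive affine conjugations. The only point needing care is the bookkeeping in the first step: one must verify that the identification $T\cong \Z^n$ implicit in the definition of $\rho_\mathrm{hol}$ coincides with the identification induced by the basis of $\iota(T)$ chosen when producing $L$. This can be arranged by selecting the basis of $\iota(T)$ so that its image under $L$ is the standard basis of $\Z^n$, after which the formula \eqref{eq:holonomy_equation} applied to $\iota'$ literally recovers $\rho_\mathrm{hol}$.
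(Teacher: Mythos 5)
Your proposal is correct and follows essentially the same route as the paper: start from a discrete Bieberbach embedding of $\Gamma$, note that the induced linear representation of $H$ is a holonomy representation, and conjugate by the element $(M,0)$ realizing the conjugacy with $\rho$. The only cosmetic difference is that you normalize the translation lattice to $\Z^n$ and verify the induced representation is $\rho_\mathrm{hol}$ by hand, whereas the paper simply invokes Lemma~\ref{lem:holonomy_properties} for this step.
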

\begin{proof}
    Since $\Gamma$ is a crystallographic group, there exists injective $\overline{\sigma}:\Gamma \rightarrow O(n)\ltimes \R^n$ with discrete image. By Lemma~\ref{lem:holonomy_properties} this induces a holonomy representation $\sigma: H \rightarrow O(n)$ such that the diagram as in \eqref{eq:commuting_hol_square} commutes. By definition, any pair of holonomy representations are conjugate over $\R$ and there exists $M\in GL_n(\R)$ such that $\rho=M\sigma M^{-1}$. The homomorphism $\overline{\rho}=M\overline{\sigma}M^{-1}$ provides the necessary lift of $\rho$. 
\end{proof}

Crucially, this can be sharpened for integral holonomy representations. The argument in the following lemma is a key ingredient in the construction of Long and Reid \cite{LongReid}.
\begin{lemma}\label{lem:holonomy_lifting}
    Let $f$ be a positive definite quadratic form of rank $n$ and $B$ be a compact flat $n$-dimensional manifold with fundamental group $\Gamma$. Then $O(f;\Z)\ltimes \Z^n$ contains a subgroup isomorphic to $\Gamma$ if and only if 
    there is a holonomy representation $\rho: H\rightarrow GL_n(\R)$ with image contained in $O(f;\Z)$.
\end{lemma}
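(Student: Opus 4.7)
The plan is to handle each direction separately. The forward direction is immediate from Lemma~\ref{lem:holonomy_properties}: since $O(f;\Z) \ltimes \Z^n$ is a discrete subgroup of $O(f;\R) \ltimes \R^n$ (because $O(f;\Z)$ and $\Z^n$ are each discrete in their real analogues), any embedding $\overline{\rho}\colon \Gamma \hookrightarrow O(f;\Z) \ltimes \Z^n$ satisfies the hypotheses of that lemma, and the induced holonomy representation $\rho\colon H \to O(f;\R)$ has image $p(\overline{\rho}(\Gamma)) \subseteq O(f;\Z)$.

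For the harder converse, my strategy is to build the embedding in three stages. Presenting $\Gamma$ as a twisted semidirect product $\Z^n \rtimes_c H$, where $H$ acts on $\Z^n$ via $\rho_\mathrm{hol}$ and $c\colon H \times H \to \Z^n$ is a cocycle representing the Bieberbach extension class $[e] \in H^2(H; \Z^n)$, I will first construct an injective homomorphism $\overline{\rho}_0\colon \Gamma \hookrightarrow GL_n(\Z) \ltimes \Q^n$ lifting $\rho_\mathrm{hol}$. The key observation is that although $[e]$ may be nontrivial, its image in $H^2(H; \Q^n)$ vanishes---since $|H|$ both annihilates and acts invertibly on this cohomology group. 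Hence $c = \delta a$ for some 1-cochain $a\colon H \to \Q^n$, and the formula $(t, h) \mapsto (\rho_\mathrm{hol}(h), t + a(h))$ defines the desired embedding into $GL_n(\Z) \ltimes \Q^n$.

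In the second stage, I exploit that $\rho$ and $\rho_\mathrm{hol}$ are both $\Q$-rational representations of the finite group $H$ which are $\R$-conjugate by the definition of holonomy representation; the Noether--Deuring theorem then yields $M \in GL_n(\Q)$ with $\rho = M\rho_\mathrm{hol}M^{-1}$. Conjugating $\overline{\rho}_0$ by the affine transformation $(M, 0)$ produces an injective homomorphism $\overline{\rho}_1\colon \Gamma \hookrightarrow O(f;\R) \ltimes \Q^n$ with rotational part $\rho$. Finally, I will pick a positive integer $N$ divisible by the denominators of $M$ and of the finitely many vectors $Ma(h)$ for $h \in H$ and conjugate $\overline{\rho}_1$ by the central scalar $(NI, 0)$; this rescales every translation by $N$ without affecting the rotational part and lands the image in $O(f;\Z) \ltimes \Z^n$.

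I expect the first stage to be the main obstacle, as it packages the Long--Reid observation that the cohomological obstruction to splitting the Bieberbach extension disappears upon passing to $\Q$-coefficients---a fact that is crucial for bringing the rational representations into alignment before the final denominator-clearing rescaling.
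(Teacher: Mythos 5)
Your argument is correct, but it takes a genuinely different route from the paper's. For the converse direction the paper fixes a finite presentation of $\Gamma$ and identifies the set of lifts $\overline{\rho}$ of $\rho$ with the solution set of a system of linear equations with rational coefficients in the translational parts; Lemma~\ref{lem:lifting_to_R} shows this set has a real point that is injective with discrete image, rational points are then dense in the solution set, injectivity and discreteness are open conditions, and finally denominators are cleared by rescaling the translations. You instead make the vanishing of the obstruction explicit: writing $\Gamma$ as an extension of $H$ by $\Z^n$ with cocycle $c$, the vanishing of $H^2(H;\Q^n)$ (annihilated by $|H|$, a $\Q$-vector space) gives $c=\delta a$ with $a$ valued in $\Q^n$, hence an explicit injective lift of $\rho_\mathrm{hol}$ into $GL_n(\Z)\ltimes\Q^n$; Noether--Deuring (or character theory over $\Q$) converts the $\R$-conjugacy of $\rho$ and $\rho_\mathrm{hol}$ into a $\Q$-conjugacy, and conjugation by a scalar matrix clears denominators exactly as in the paper. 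What your route buys is that injectivity is immediate from the formula (after normalizing the section so $a(e)=0$) and discreteness is automatic once the image sits inside the discrete group $O(f;\Z)\ltimes\Z^n$, so you avoid the paper's appeal to Lemma~\ref{lem:lifting_to_R} and its somewhat delicate claim that injectivity and discreteness are open conditions on the space of lifts; what the paper's route buys is that it stays entirely within elementary linear algebra (no group cohomology or Noether--Deuring) and is the argument of Long--Reid that the authors are deliberately reprising. Your forward direction coincides with the paper's, both being immediate from Lemma~\ref{lem:holonomy_properties}.
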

\begin{proof}
One implication is straightforward: given a subgroup of $O(f;\Z)\ltimes \Z^n$ isomorphic to $\Gamma$, Lemma~\ref{lem:holonomy_properties} constructs the desired holonomy representation.

We now set about finding $\Gamma$ as a subgroup of $O(f;\Z)\ltimes \Z^n$.
Let $\rho: H \rightarrow O(f;\Z)$ be a holonomy representation for $B$. Consider the set $\mathcal{S}$ of homomorphisms $\overline{\rho}$ such that the following diagram commutes:
    \begin{equation}\label{eq:int_hol_lifting}
        \begin{tikzcd}
\Gamma \arrow{r}{\overline{\rho}}\arrow{d}{p} & O(f;\Z)\ltimes \R^n \arrow{d}{p} \\
H \arrow{r}{\rho} & O(f;\Z).
\end{tikzcd}
\end{equation}
    Let $\langle g_1, \dots, g_\ell \mid r_1, \dots, r_m \rangle$ be a finite presentation for $\Gamma$. Under a homomorphism $\overline{\rho}$ such that \eqref{eq:int_hol_lifting} commutes the $\rho(g_i)$ are affine maps of the form $x\mapsto A_ix+\mu_i$ where $A_i=\rho(p(g_i))$ is determined and the $\mu_i\in \R^n$ satisfy the constraints imposed by the relations $r_1, \dots, r_m$. Since each relation $r_j$ is a product of the $g_i$, the constraints imposed on the $\mu_i$ are a system of linear equations coefficients arising as the products of the $A_i$. In particular, this allows us to identify $\mathcal{S}$ with an algebraic subset of $\R^{\ell n}$ defined by linear equations with coefficients in $\Z$. By Lemma~\ref{lem:lifting_to_R}, $\mathcal{S}$ is nonempty and contains $\overline{\rho}'$ which is injective and has discrete image. Thus the linear equations defining $\mathcal{S}$ also admit a solution over $\Q$ and we may, in fact, find rational solutions arbitrarily close to $\overline{\rho}'$. Thus there exists a homomorphism $\overline{\rho}:\Gamma \rightarrow O(f;\Z)\ltimes \Q^n$. Furthermore, since injectivity and discreteness impose open conditions on $\mathcal{S}$, we may take $\overline{\rho}$ to be injective and with discrete image.
    
    Finally, by rescaling the $\mu_i$ to clear denominators, we can obtain injective $\overline{\rho}$ with image in $O(f;\Z)\ltimes \Z^n$. The image of $\Gamma$ under this homomorphism yields the desired subgroup.
\end{proof}

Finally, we mention the toral extension construction of Auslander-Vasquez that allows us to construct new flat manifolds from old \cite{Vasquez1970flat}.
\begin{prop}\label{prop:toral_extension}
    Let $B$ be a compact flat manifold with holonomy representation $\rho: H\rightarrow GL_n(\Z)$. Then for any representation $\sigma: H\rightarrow GL_m(\Z)$, there is a compact flat $(n+m)$-manifold $\widetilde{B}$ with holonomy representation $\rho \oplus \sigma: H\rightarrow GL_{m+n}(\Z)$.
\end{prop}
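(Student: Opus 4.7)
The plan is to construct $\widetilde{\Gamma}$ as the semi-direct product $\Gamma \ltimes_{\sigma \circ p} \Z^m$, verify it is a Bieberbach group whose translation lattice carries the action $\rho \oplus \sigma$, and then realize $\widetilde{\Gamma}$ as a discrete subgroup of $O(n+m;\R) \ltimes \R^{n+m}$ so that the quotient yields the desired flat manifold $\widetilde{B}$. Here $\Gamma = \pi_1(B)$ fits in $1 \to T \to \Gamma \xrightarrow{p} H \to 1$ with $T \cong \Z^n$, and the semi-direct product uses the action $\Gamma \xrightarrow{p} H \xrightarrow{\sigma} GL_m(\Z)$.

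First I would verify that $\widetilde{\Gamma}$ is torsion-free: if $(g,v)^k = (1,0)$, then $g^k = 1$, forcing $g = 1$ since $\Gamma$ is torsion-free, after which the $\Z^m$-coordinate of $(1,v)^k$ reduces to $kv = 0$, so $v = 0$. Next I would identify the translation subgroup. Since $T$ acts trivially on $\Z^m$ (as $p(T) = 1$), the subgroup $T \times \Z^m \cong \Z^{n+m}$ is a normal free abelian subgroup of $\widetilde{\Gamma}$ of index $|H|$. A direct computation of conjugation by $(g,v) \in \widetilde{\Gamma}$ on $(t,w) \in T \times \Z^m$ shows the $v$-components cancel and yields $(gtg^{-1}, \sigma(p(g))w)$, so $H$ acts on $\Z^{n+m}$ precisely as $\rho \oplus \sigma$.

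To promote $\widetilde{\Gamma}$ to a Bieberbach group, I would combine the embedding $\overline{\rho} : \Gamma \hookrightarrow O(n) \ltimes \R^n$ furnished by Lemma~\ref{lem:lifting_to_R}, writing $\overline{\rho}(g) = (A_g, \mu_g)$, with an orthogonal realization of $\sigma$. The standard averaging trick yields a positive definite $\sigma(H)$-invariant form $Q = \sum_{h \in H} \sigma(h)^T \sigma(h)$ on $\R^m$, and choosing $N \in GL_m(\R)$ with $N^T N = Q$ gives $N \sigma(h) N^{-1} \in O(m;\R)$. Then
\[
(g, v) \;\longmapsto\; \left( \begin{pmatrix} A_g & 0 \\ 0 & N \sigma(p(g)) N^{-1} \end{pmatrix},\; (\mu_g, Nv) \right)
\]
is a homomorphism into $O(n+m;\R) \ltimes \R^{n+m}$. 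Injectivity follows from injectivity of $\overline{\rho}$ together with invertibility of $N$, and discreteness of the image follows because its restriction to the translation subgroup lands in the lattice $\overline{\rho}(T) \oplus N \Z^m$. Setting $\widetilde{B} := \R^{n+m}/\widetilde{\Gamma}$ produces a compact flat $(n+m)$-manifold, and Lemma~\ref{lem:holonomy_properties} applied to this embedding, combined with the identification of the translation lattice from Step 2, confirms that $\rho \oplus \sigma$ is a holonomy representation of $\widetilde{B}$.

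The main obstacle is the Euclidean realization step, since the earlier arguments amount to algebraic bookkeeping about group extensions. The critical input there is finiteness of $\sigma(H)$, which allows averaging to produce an orthogonal form and hence allows the $\Z^m$ factor to be realized as a genuine translation lattice in a Euclidean space compatible with the action of $\Gamma$ already embedded in $O(n) \ltimes \R^n$.
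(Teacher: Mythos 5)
Your proof is correct and is essentially the same construction as the paper's: the paper forms the quotient of $\R^n\times T^m$ by the $\Gamma$-action $(x,y)\mapsto (gx,\,p(g)y)$ (after averaging to make $H$ act on $T^m$ by isometries), and the deck group of the universal cover $\R^{n+m}$ of that quotient is precisely your semi-direct product $\Gamma\ltimes_{\sigma\circ p}\Z^m$ with your embedding into $O(n+m;\R)\ltimes\R^{n+m}$. The only difference is one of bookkeeping: you work at the level of the fundamental group and spell out the torsion-freeness, the translation lattice $T\times\Z^m$, and the conjugation computation giving $\rho\oplus\sigma$, all of which the paper compresses into ``it is not hard to see.''
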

\begin{proof}
We have an action of $\Gamma=\pi_1 (B)$ on $\R^n$ by isometries such that the quotient is $B$. The representation $\sigma$ gives an action of $H$ on $T^m=\R^m/\Z^m$. We may choose a Euclidean metric on $T^m$ so that $H$ acts by isometries. We have an action of $\Gamma$ on $\R^n\times T^m$ by Euclidean isometries via
\begin{align*}
    \R^n\times T^m &\rightarrow \R^n\times T^m\\
    (x,y)&\mapsto (gx,p(g)y)
\end{align*}
    for all $g\in \Gamma$, where $p:\Gamma \rightarrow H$ is the projection of $\Gamma$ onto its holonomy group. The quotient of $\R^n\times T^m$ by this action $\widetilde{B}$ is a compact flat manifold of dimension of $n+m$. Moreover, it is not hard to see that the holonomy group of $\widetilde{B}$ is $H$ and that the holonomy representation is $\rho\oplus \sigma: H\rightarrow GL_{m+n}(\Z)$.
\end{proof}

\section{Arithmetic hyperbolic manifolds}
\label{s:arith_mflds}
In order to maintain the convention that cusp cross-sections have dimension $n$, we will work with hyperbolic manifolds of dimension $n+1$.
Let $q_0$ denote the real quadratic form of signature $(n+1,1)$ defined by
\[
q_0=\langle \underbrace{1,\dots, 1}_{n+1},-1 \rangle.
\]
The hyperboloid model for $(n+1)$-dimensional hyperbolic space is the set
\[
\Hyp^{n+1}=\{ x\in \R^{n+2} \mid q_0(x)=-1, \, x_{n+2}>0\}
\]
equipped with the metric obtained by restricting the form $q_0$ to the tangent space of $\Hyp^{n+1}$. In this model of hyperbolic space the isometry group of $\Hyp^{n+1}$ is
\[\Isom(\Hyp^{n+1})= O^+(q_0;\R),\]
where $O^+(q_0;\R)$ is the index two subgroup of $O(q_0;\R)$ that maps $\Hyp^{n+1}$ to itself. 

The topic of this paper is the cross-sections $ B $ of the cusps of non-compact, finite-volume, arithmetic, hyperbolic $ (n+1) $-manifolds.  All non-compact, finite-volume, arithmetic, hyperbolic $ (n+1) $-manifolds are arithmetic of simplest type over $ \mathbb{Q} $ (\cite{DWM}, Cor. 18.6.3 gives the non-trialitarian case, and an explanation for the trialitarian case based on \cite{Tits65}, \cite{BeCl}, and \cite{Godement} can be found in \S 3.4 of \cite{BBKS}).  For this reason, rather than giving the definition of an arithmetic group in full generality, we give a definition for those of simplest type.  For similar reasons, we will also only define quasi-arithmetic manifolds analogous to arithmetic ones of simplest type.

Let $q$ be a rational quadratic form of signature $(n+1,1)$. Since real quadratic forms are classified by their signatures, there exists matrix $A\in GL_{n+2}(\R)$ such that $q_0(x)=q(Ax)$ for all $x\in \R^{n+2}$. Fixing such an $A$ allows us to define a new model for hyperbolic space
\[
\Hyp_q^{n+1}:= A \Hyp^{n+1},
\]
where the isometry group 
\[\Isom(\Hyp_q^{n+1})=O^+(q;\R)=A O^+(q_0;\R)A^{-1}\]
is the index two subgroup of $O(q;\R)$ that preserves $\Hyp_q^{n+1}$. Given a discrete, torsion-free subgroup $\Gamma \leq O^+(q;\R)$, we obtain a hyperbolic manifold $\Hyp^{n+1}_q /\Gamma$ such that the natural diffeomorphism $\Hyp^{n+1}/A^{-1}\Gamma A \rightarrow \Hyp^{n+1}_q/\Gamma$ induced by $A$ is an isometry.

\begin{defn}
An \emph{arithmetic hyperbolic $(n+1)$-manifold of simplest type over $\Q$} is a manifold of the form $ \Hyp_q^{n+1} / \Gamma $, where $ q $ is a rational quadratic form of signature $(n+1,1) $ and $ \Gamma \leq O^+(q; \mathbb{R}) $ is commensurable to $O^+(q; \Z)$. 
\end{defn}
An arithmetic hyperbolic manifold always has finite volume \cite{Borel-Harish-Chandra}. Since Selberg's lemma implies that $O^+(q;\Z)$ always contains a finite-index torsion-free subgroup, every rational quadratic form of signature $(n+1,1)$ defines a non-empty commensurability class of arithmetic hyperbolic manifolds. Furthermore, the commensurability class of hyperbolic manifolds defined by $q$ depends only on the projective equivalence class of $q$.

\begin{prop}[{\cite{MA15}}]
    Let $ M_1 $ and $ M_2 $ be arithmetic hyperbolic orbifolds of simplest type with associated quadratic forms $ q_1 $ and $ q_2 $ respectively.  Then $ M_1 $ and $ M_2 $ are commensurable if and only if $ q_1 $ and $ q_2 $ are projectively equivalent. \qed
\end{prop}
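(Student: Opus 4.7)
I would prove both directions separately. For the forward direction, suppose $q_1$ and $q_2$ are projectively equivalent, so there exist $m\in\Q_{>0}$ and $C\in GL_{n+2}(\Q)$ with $(mq_1)(v)=q_2(Cv)$. Since $O(mq_1)=O(q_1)$ as algebraic groups, Proposition~\ref{prop:equivalence_implies_conjugation} yields $C\,O(q_1;\Q)\,C^{-1}=O(q_2;\Q)$. Because $C$ is rational, $C\,O(q_1;\Z)\,C^{-1}$ is an arithmetic subgroup of $O(q_2;\Q)$, and any two arithmetic subgroups of the same $\Q$-algebraic group are commensurable. Restricting to $O^+$ and transporting through the matrices $A_1, A_2$ defining the hyperboloid models $\Hyp^{n+1}_{q_i}$ then exhibits a common finite-sheeted cover of $M_1$ and $M_2$.

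The reverse direction is the substantive one. Suppose $M_1$ and $M_2$ are commensurable. Pulling both lattices back to the standard model via $A_1^{-1}$ and $A_2^{-1}$ and invoking commensurability produces a matrix $B\in GL_{n+2}(\R)$ such that $\Gamma_1$ and $B\,\Gamma_2\, B^{-1}$ are commensurable lattices in $O^+(q_1;\R)$. The Zariski closure over $\Q$ of an arithmetic lattice in $O(q_i;\R)$ is $O(q_i)$, and commensurable $\Q$-arithmetic lattices share their $\Q$-Zariski closure. Hence $O(q_1)$ and $B\,O(q_2)\,B^{-1}$ coincide as $\Q$-algebraic subgroups of $GL_{n+2}$, forcing $O(q_1)\cong O(q_2)$ as $\Q$-algebraic groups.

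Finally, I would invoke the classical result (essentially due to Dieudonn\'e) that for non-degenerate quadratic forms of rank $\geq 3$, every $\Q$-algebraic isomorphism $O(q_1)\cong O(q_2)$ is induced by a similarity, that is, by a matrix $C\in GL_{n+2}(\Q)$ carrying $q_1$ to a positive rational scalar multiple of $q_2$; this is exactly projective equivalence. The main obstacle is extracting a \emph{rational} conjugator from the a priori only real $B$ appearing in the previous paragraph. This step uses the arithmetic hypothesis in an essential way and is most cleanly handled via Borel's theorem identifying the commensurator of an arithmetic lattice with the $\Q$-points of its ambient algebraic group.
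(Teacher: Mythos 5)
The paper does not prove this proposition; it is quoted from \cite{MA15} with the proof omitted, so there is no internal argument to compare yours against. Judged on its own terms, your forward direction is complete and correct: a rational equivalence $mq_1\sim q_2$ with $m>0$ conjugates $O(q_1;\Q)$ onto $O(q_2;\Q)$ by a rational matrix, carries one arithmetic subgroup to a group commensurable with the other, and (after adjusting by $-I$ to preserve the upper sheet) descends to a common finite cover of $M_1$ and $M_2$.

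The reverse direction, however, contains a gap that you yourself flag but do not close, and the tool you propose for closing it is not the right one. Borel's commensurator theorem concerns elements $g$ with $g\Gamma g^{-1}$ commensurable to the \emph{same} lattice $\Gamma$; your matrix $B$ conjugates a finite-index subgroup of $\Gamma_2$ onto a subgroup of the \emph{different} lattice $\Gamma_1$, so the theorem does not directly produce a rational replacement for $B$. Likewise, the assertion that $O(q_1)$ and $BO(q_2)B^{-1}$ ``coincide as $\Q$-algebraic subgroups'' presupposes exactly the rationality you are trying to establish. The standard way to finish is via Skolem--Noether: since the standard representation of $SO(q_i)$ is absolutely irreducible for rank $\geq 3$, the $\Q$-span of a Zariski-dense subgroup $\Delta_i\leq SO(q_i;\Q)$ is all of $M_{n+2}(\Q)$; because $\mathrm{Int}(B)$ maps $\Delta_2$ into $GL_{n+2}(\Q)$, it restricts to a $\Q$-algebra isomorphism of $M_{n+2}(\Q)$, hence equals $\mathrm{Int}(C)$ for some $C\in GL_{n+2}(\Q)$ with $B=\mu C$, $\mu\in\R^\times$. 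Then $(C^{-1})^TQ_1C^{-1}$ is a $\Delta_2$-invariant symmetric matrix, and the space of such matrices is a one-dimensional $\Q$-vector space spanned by $Q_2$, giving $q_1(C^{-1}v)=mq_2(v)$ with $m\in\Q^\times$; positivity of $m$ is forced by the signatures. This also lets you bypass the appeal to Dieudonn\'e's classification of isomorphisms of orthogonal groups, which in any case would require a remark about why triality in rank $8$ causes no trouble for the groups in their standard representations.
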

For the purposes of this article, the following definition of quasi-arithmetic hyperbolic manifold will be sufficient. A more general definition can be found in \cite{Vinberg}.
\begin{defn}
Let $q$ be a rational quadratic form of signature $(n+1,1)$. A \emph{quasi-arithmetic hyperbolic manifold associated to $q$} is a finite-volume hyperbolic manifold of the form $\Hyp_q^{n+1} / \Gamma $ where $ \Gamma \leq O^+(q; \mathbb{R}) $ is discrete, torsion-free and commensurable to a subgroup of $O^+(q; \Q)$.
\end{defn}

\subsection{Parabolic subgroups}
In the hyperboloid model of hyperbolic space, points in the boundary $\partial \Hyp^{n+1}$ correspond to lines in the cone $\{x \in \R^{n+2} \mid q_0(x)=0\}$. Similarly, if we take $q$ to be a rational quadratic form of signature $(n+1,1)$, then the points in the boundary $\partial\Hyp_q^{n+1}$ correspond to the lines in $\{x \in \R^{n+2} \mid q(x)=0\}$. Let $q=f\oplus \langle 1,-1\rangle$ be rational quadratic form of signature $(n+1,1)$. The vector $u=(0,\dots,0,1,1)^T$ satisfies $q(u)=0$ and so defines a point in $\partial\Hyp_q^{n+1}$ that will also be denoted by $u$. Let $\Lambda_u$ denote the subgroup of $O^+(q;\R)$ consisting of parabolic elements fixing $u$. A direct calculation shows that we have an isomorphism
\begin{align}\begin{split}\label{eq:explicit_iso}
    \Phi &: O(f;\R)\ltimes \R^n \rightarrow \Lambda_u\\
    \Phi&(A,v) =
\begin{pmatrix}
    A & -v & v\\
    v^TMA & 1-\frac{f(v)}{2} & \frac{f(v)}{2}\\
    v^TMA & -\frac{f(v)}{2} & 1 + \frac{f(v)}{2}
\end{pmatrix},
\end{split}
\end{align}
where $M$ is the symmetric matrix such that $f(v)=v^TMv$.

We have two important observations.
\begin{lemma}\label{lem:translations_unipotent}
    Let $M\in O^+(q;\R)$ be a non-trivial parabolic isometry of $\Hyp_q^n$ fixing a point $w\in \partial \Hyp_q^n$ that acts by translation on the horospheres tangent to $w$. Then
    \begin{enumerate}[(i)]
        \item the image of $(M-I)^2$ is the line spanned by $w$ and 
        \item $M$ is unipotent.

    \end{enumerate}
\end{lemma}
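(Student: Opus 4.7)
The plan is to reduce to the case where the fixed boundary point is $u$ and then exploit the explicit parameterization $\Phi$ of $\Lambda_u$ from \eqref{eq:explicit_iso} to verify both claims by direct computation. Both conclusions are invariant under conjugation by elements of $GL_{n+2}(\R)$: if $M' = PMP^{-1}$, then $(M'-I)^2 = P(M-I)^2 P^{-1}$, so the image of $(M'-I)^2$ is $P$ applied to the image of $(M-I)^2$, and unipotency is preserved. By Witt's theorem we may choose a basis over $\R$ in which $q$ takes the form $f \oplus \langle 1, -1\rangle$ with $f$ positive definite of rank $n$, and since $O^+(q;\R)$ acts transitively on $\partial \Hyp^{n+1}_q$, we may further conjugate so that $w = u$. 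Thus, without loss of generality, $M \in \Lambda_u$.

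Next, I would pin down which element of $\Lambda_u$ the isometry $M$ is. The restriction of $\Phi(A, v)$ to a horosphere tangent to $u$ is, in suitable Euclidean coordinates, the affine map $x \mapsto Ax + v$; this is a non-trivial translation precisely when $A = I$ and $v \neq 0$. So the hypotheses force $M = \Phi(I, v)$ for some $v \in \R^n \setminus \{0\}$.

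The key calculation is then $(M-I)^2$, read off from the explicit formula for $\Phi(I, v)$. The block structure of $M - I$ is such that when it is squared, the various cross products cancel (the last two rows of $M-I$ are identical and the last two columns are negatives of one another), and one obtains
\[
(M-I)^2 = f(v)\, u\, e^T, \qquad e = (0, \dots, 0, -1, 1)^T.
\]
Since $f$ is positive definite and $v \neq 0$, $f(v) > 0$, so the image is exactly $\R u = \R w$, proving (i). For (ii), a direct check that $Mu = u$ (the sum of the last two columns of $M$ equals $u$) gives $(M - I)u = 0$, whence $(M-I)^3 = f(v)(M-I)u \cdot e^T = 0$. Thus $M - I$ is nilpotent and $M$ is unipotent. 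The only step requiring genuine care is identifying "acts by translation on horospheres" with the subgroup $\Phi(\{I\} \times \R^n)$; the remaining computations are mechanical once the block decomposition is unpacked.
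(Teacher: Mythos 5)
Your proof is correct and follows essentially the same route as the paper: conjugate so the fixed point is $u$, identify the translation as $M=\Phi(I,v)$ with $v\neq 0$ via the explicit isomorphism \eqref{eq:explicit_iso}, and compute $(M-I)^2$ (whose image is $\R u$ since $f(v)\neq 0$) and $(M-I)^3=0$ directly. Your rank-one expression $(M-I)^2=f(v)\,u\,e^T$ agrees with the matrix displayed in the paper's proof.
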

\begin{proof}
    Since the parabolic subgroup of $O^+(q;\R)$ fixing a $w\in \Hyp_q^{n+1}$ is conjugate to the parabolic subgroup $\Lambda_u$ in $O^+(q;\R)$, it suffices to verify the properties for a translation in $\Lambda_u$. In this case $X$ takes the form $M=\Phi(I,v)$ for $\Phi$ as in \eqref{eq:explicit_iso} with $v\neq 0$. For such an $M$, we have that
    \[
    (M-I)^2=\begin{pmatrix}
        0& 0& 0\\
        0 & -f(v) & f(v)\\
        0 & -f(v) & f(v),
    \end{pmatrix}
    \]
    whose image is clearly the span of $u$ since $f(v)\neq 0$ and
    \[
    (M-I)^3=0,
    \]
    giving unipotency.
\end{proof}
The key property of unipotent matrices we need is the following.
\begin{lemma}
    \label{lem:unipotent_rational}
    Let $M$ be a real unipotent matrix. If $M^k$ has coefficients in $\Q$ for some $k\geq 1$, then $M$ has also coefficients in $\Q$.
\end{lemma}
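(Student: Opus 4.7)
The plan is to use the mutually inverse correspondence between unipotent matrices and nilpotent matrices given by the truncated $\exp$ and $\log$ series. Since the formal power series $\log(1+x)=\sum_{j\geq 1}\frac{(-1)^{j+1}}{j}x^j$ and $e^{x}-1=\sum_{j\geq 1}\frac{x^j}{j!}$ lie in $\Q[[x]]$ and are compositional inverses, substituting any nilpotent matrix $N$ produces finite sums, and the formal identities $\exp(\log(I+N))=I+N$ and $\log(\exp N)=N$ carry over by substitution. Hence $U\mapsto \log U$ is a bijection from unipotent to nilpotent matrices with inverse $N\mapsto \exp N$, and each direction is given by a $\Q$-polynomial expression in $U-I$ or $N$ respectively.

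Next I would verify the key identity $\log(M^k)=k\log M$ for $M$ unipotent. Setting $L:=\log M$, the matrix $L$ commutes with $M$ and with itself, so the standard formal manipulation shows that $t\mapsto \exp(tL)$ is a one-parameter subgroup with $\exp(L)=M$; evaluating at $t=k\in\Z_{\geq 1}$ gives $M^k=\exp(kL)$, and applying $\log$ to both sides yields $\log(M^k)=kL$.

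Given these two ingredients, the proof of the lemma is immediate. Suppose $M^k$ has rational entries. Then $N':=M^k-I$ is a nilpotent matrix with rational entries, so $\log(M^k)=\sum_{j\geq 1}\frac{(-1)^{j+1}}{j}(N')^j$ is a $\Q$-linear combination of powers of $N'$ and thus has rational entries. By the identity $\log(M^k)=k\log M$, we get $\log M=\tfrac{1}{k}\log(M^k)\in M_n(\Q)$. Finally $M=\exp(\log M)=\sum_{j\geq 0}\frac{(\log M)^j}{j!}$ is a $\Q$-linear combination of powers of the rational nilpotent matrix $\log M$, and therefore has rational entries.

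The only step that requires more than bookkeeping is the formal substitution argument underlying $\exp\circ\log=\mathrm{id}$ on unipotent matrices and the one-parameter subgroup identity $M^k=\exp(kL)$; these are standard consequences of the fact that all matrices in sight commute, so that the formal identities $P(Q(x))=x$ and $Q(x)^k=Q(kx)$ (in the appropriate formal sense, after truncation) transfer to identities among matrices. I do not anticipate a serious obstacle beyond carefully invoking this formalism.
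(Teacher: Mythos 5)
Your argument is correct, but it takes a genuinely different route from the paper. The paper never mentions $\log$ or $\exp$: it first shows that $M$ and $M^k$ have the same minimal polynomial $(X-1)^\ell$, and then uses a dimension count --- the $\Q$-span of the powers of $M$ is $\ell$-dimensional, so the $\ell+1$ matrices $I, M^k, \dots, M^{k(\ell-1)}, M$ admit a rational linear dependence in which the coefficient of $M$ must be nonzero because $I, M^k, \dots, M^{k(\ell-1)}$ are independent --- to conclude that $M$ is a rational linear combination of powers of the rational matrix $M^k$. Your truncated $\log$/$\exp$ correspondence reaches the same underlying conclusion, namely that $M$ lies in $\Q[M^k]$, but does so constructively: $M=\exp\bigl(\tfrac{1}{k}\log(M^k)\bigr)$ exhibits an explicit $\Q$-polynomial in $M^k-I$ equal to $M$, at the cost of invoking the formal power-series identities $\exp\circ\log=\mathrm{id}$, $\log\circ\exp=\mathrm{id}$, and $\exp(N_1+N_2)=\exp(N_1)\exp(N_2)$ for commuting nilpotents (all of which do transfer correctly under substitution of nilpotent matrices, since every series truncates). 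The paper's argument is more elementary, using only the linear algebra of minimal polynomials; yours is more conceptual and yields an explicit formula, and the only point to state carefully is the injectivity of $\exp$ on nilpotents (i.e.\ $\log(\exp N)=N$), which you need in order to pass from $\exp(kL)=M^k$ to $\log(M^k)=kL$ --- you do flag this, so there is no gap.
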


\begin{proof}
    A unipotent matrix $M$ has a minimal polynomial of the form $(X-1)^\ell$ for some positive integer $\ell$. We will show that the matrix $M^k$ is unipotent and has the same minimal polynomial. Unipotency of $M^k$ comes from the observation that
    \begin{equation}\label{eq:unipotency}
    (M^k-I)^\ell =(M-I)^\ell (M^{k-1}+ \dots + I)^\ell=0.
    \end{equation}
    Furthermore, \eqref{eq:unipotency} shows that the minimal polynomial for $M^k$ must divide $(X-1)^\ell$ and hence takes the form $(X-1)^{\ell'}$ for some $\ell'\leq \ell$.  
    On the other hand, given such a minimal polynomial for $M^k$, we see that the minimal polynomial of $M$, namely $(X-1)^\ell$,  must divide $(X^k-1)^{\ell'}$. This is only possible if $\ell'\geq \ell$. Hence $\ell=\ell'$ and we have shown that $M$ and $M^k$ have the same minimal polynomials. 

    Since the minimal polynomial of $M$ has degree $\ell$ and with rational coefficients, we see that the $\Q$-vector space spanned by the matrices $I, M, M^2, \dots $ is $\ell$-dimensional. Therefore, there is linear dependency over $\Q$ amongst the $\ell+1$ matrices $I, M^k, \dots M^{k(\ell-1)}$ and $M$. We write this dependency in the form
    \[b M= a_0 I + a_1 M^k + \dots + a_{\ell -1} M^{k(\ell-1)}.\]
    Since the minimal polynomial of $M^k$ has degree $\ell$, we see that $b$ must be non-zero. Thus we may write $M$ as a rational combination of $I, M^k, \dots, M^{k(\ell-1)}$, which are all rational matrices.
\end{proof}

 We say that $w\in \partial\Hyp^{n+1}_q$ is a \emph{rational point} if the line defining $w$ contains a non-zero point with rational coordinates. 
\begin{lemma}\label{lem:rational_transitivity}
    The group $O^+(q;\Q)$ acts transitively on the rational points in $\partial \Hyp_q^{n+1}$.
\end{lemma}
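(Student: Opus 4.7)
Pick nonzero rational null vectors $w_1, w_2 \in \Q^{n+2}$ representing the given rational boundary points. Since $w_1, w_2$ are both isotropic, the linear map $\Q w_1 \to \Q w_2$ sending $w_1 \mapsto w_2$ is an isometry of one-dimensional subspaces of the non-degenerate rational quadratic space $(\Q^{n+2}, q)$. Witt's extension theorem (which is valid for arbitrary, possibly degenerate, subspaces of a non-degenerate quadratic space over a field of characteristic $\neq 2$) then produces some $A \in O(q;\Q)$ with $Aw_1 = w_2$. This already shows that $O(q;\Q)$ acts transitively on the set of rational boundary points, and the only remaining issue is to arrange that the isometry lies in the index-two subgroup $O^+(q;\Q)$.

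If $A$ already lies in $O^+(q;\Q)$, we are done. Otherwise, I would replace $A$ by $-A = (-I)A$, which still lies in $O(q;\Q)$. A direct inspection in the standard model (where $x \mapsto -x$ reverses the sign of the last coordinate and hence exchanges the two sheets of $\{q_0 = -1\}$) shows that $-I$ lies in $O(q;\R) \setminus O^+(q;\R)$; since $O^+(q;\R)$ has index two in $O(q;\R)$, the product $(-I)A$ lands in $O^+(q;\R)$, so $-A \in O^+(q;\Q)$. On the other hand, $-A(w_1) = -w_2$ spans the same rational null line as $w_2$, so $-A$ sends the boundary point of $w_1$ to the boundary point of $w_2$, as required.

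The only nontrivial ingredient is the invocation of Witt's extension theorem over $\Q$; the rest is a short sign argument exploiting the fact that $-I$ preserves every line setwise while belonging to the nontrivial coset of $O^+(q;\Q)$ in $O(q;\Q)$. I do not foresee any serious obstacle beyond quoting the correct form of Witt's theorem.
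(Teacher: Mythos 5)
Your proof is correct, but it takes a genuinely different route from the paper. The paper argues by explicit matrices: it writes down the parabolic elements $\Phi(I,w)\in O^+(q;\Q)$ (using the block form $q=f\oplus\langle 1,-1\rangle$) which carry the base point $(0,\dots,0,-1,1)^T$ to $(2w,f(w)-1,f(w)+1)^T$ for arbitrary $w\in\Q^n$, observes that every rational boundary point except $u=(0,\dots,0,1,1)^T$ arises this way, and finishes with an explicit rational element swapping the two exceptional points. You instead invoke the general form of Witt's extension theorem (valid for possibly degenerate subspaces of a nondegenerate space in characteristic $\neq 2$) to extend the isometry $w_1\mapsto w_2$ of isotropic rational lines to some $A\in O(q;\Q)$, and then correct the sheet-swapping possibility by composing with $-I$, which is rational, central, lies in the nontrivial coset of $O^+(q;\R)$, and preserves every line, so $-A$ induces the same map on boundary points. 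All the ingredients check out: rational representatives of rational boundary points exist by definition, the two null lines carry the zero form so the map between them is an isometry, and the index-two argument places $-A$ in $O^+(q;\Q)=O(q;\Q)\cap O^+(q;\R)$. What each approach buys: yours is shorter and more conceptual, works verbatim for any isotropic rational form without using the decomposition $q=f\oplus\langle 1,-1\rangle$, but imports Witt's theorem as a black box; the paper's is elementary and constructive, and reuses the parabolic parametrization $\Phi$ already set up for the rest of the argument, which is why the authors phrase it that way.
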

\begin{proof}
    This can be done by explicitly writing down appropriate rational matrices. For example, for any $w\in \Q^{n}$ we see that the rational point on the boundary corresponding to $(0,\dots, 0, -1, 1)^T$ is carried to $(2w, f(w)-1, f(w)+1)^T$ by a matrix of the form
    \[
    \begin{pmatrix}
    I & -w & w\\
    w^TM & 1-\frac{f(w)}{2} & \frac{f(w)}{2}\\
    w^TM & -\frac{f(w)}{2} & 1 + \frac{f(w)}{2}.
\end{pmatrix}\in O^+(q;\Q).
    \]
    However, the only rational point on the boundary that does not correspond to a rational point of this form is the point $(0,\dots, 0, 1, 1)^T$. However there are clearly elements of $O^+(q;\Q)$ that exchange $(0,\dots, 0, 1, 1)^T$ and $(0,\dots, 0, -1, 1)^T$, so we see that $O^+(q;\Q)$ acts transitively on the rational points in $\partial \Hyp^n_q$. 
\end{proof}

    

\subsection{Proof of Theorem~\ref{thm:main_technical} and Theorem~\ref{thm:quasi_arith_obstruction}}
\label{s:main_thm}
First, we show that the existence of a cusp cross-section implies the existence of a holonomy representation.
\begin{reptheorem}{thm:quasi_arith_obstruction}
\quasiarithobstruction
\end{reptheorem}
\begin{proof}
If $B$ appears as a cusp cross-section in such a quasi-arithmetic manifold, then there is a group $\Gamma\leq O^+(q;\R)$ commensurable with a  $O^+(q;\Q)$ such that for some $ y \in \partial \Hyp_q^n $ we have $\Pi=\Gamma \cap \Lambda_y$, where $\Pi \cong \pi_1(B)$ and $\Lambda_y$ denotes the maximal parabolic subgroup of $O^+(q;\R)$ fixing $y$.

Let $T\leq \Pi$ be the translation subgroup. First, we verify that $T\leq \Lambda_y\cap O^+(q;\Q)$. By Lemma~\ref{lem:translations_unipotent}(ii), any $t\in T$ is a unipotent matrix. Furthermore, since $\Gamma$ is commensurable with a subgroup of $O^+(q;\Q)$, there exists an integer $k\geq 1$ such that $t^k\in O^+(q;\Q)$. Thus Lemma~\ref{lem:unipotent_rational} implies that $t\in O^+(q;\Q)$.

Since $T\leq \Lambda_y\cap O^+(q;\Q)$ contains a non-trivial translation, Lemma~\ref{lem:translations_unipotent}(i) implies that $y$ is a rational point of $\partial \Hyp_q^n$. By Lemma~\ref{lem:rational_transitivity} this allows us to conjugate $\Gamma$ by an element of $O^+(q;\Q)$ in order to assume that $\Pi\leq \Lambda_u$, where $\Lambda_u$ is the maximal parabolic subgroup fixing $u=(0,\dots, 0,1,1)^T$. Note that we still have $T\leq \Lambda_u\cap O^+(q;\Q)$.

Applying $\Phi^{-1}$ from \eqref{eq:explicit_iso}, we obtain an injective homomorphism $\Phi^{-1}:\Pi \rightarrow O(f;\R)\ltimes \R^{n}$ with discrete image, where $T$ has image in the rational translations $I\ltimes \Q^n$. Applying Lemma~\ref{lem:holonomy_properties} to this homomorphism induces a holonomy representation of $B$ with image in $O(f;\Q)$, as required. 
\end{proof}
We make note of the following more elaborate statement from which Theorem~\ref{thm:main_technical} is easily deduced.
\begin{theorem}\label{thm:main_technical2}
    Let $f$ be a positive-definite $n$-dimensional rational quadratic form and let $B$ be a compact flat $n$-manifold. Then the following are equivalent:
    \begin{enumerate}[(i)]
        \item\label{it:cusp_cross} $B$ arises as a cusp cross-section in the commensurability class of arithmetic hyperbolic manifolds defined by $f\oplus \langle 1,-1 \rangle$;
        \item\label{it:rational_rep} $B$ has a holonomy representation taking values in $O(f; \Q)$ and
        \item\label{it:integral_rep} $B$ has a holonomy representation taking values in $O(f'; \Z)$ for a quadratic form $f'$ which is rationally equivalent to $f$.
    \end{enumerate}
\end{theorem}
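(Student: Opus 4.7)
The plan is to prove the cyclic chain (iii) $\Rightarrow$ (i) $\Rightarrow$ (ii) $\Rightarrow$ (iii). The main obstacle will be (i) $\Rightarrow$ (ii); the other two implications are direct applications of earlier lemmas together with either the Long--Reid--McReynolds construction or a standard change-of-basis argument.

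For (iii) $\Rightarrow$ (i), apply Lemma~\ref{lem:holonomy_lifting} to realize $\pi_1(B)$ as a subgroup of $O(f';\Z) \ltimes \Z^n$; composing with the map $\Phi$ of \eqref{eq:explicit_iso} embeds this subgroup in $O^+(q';\Z)$, where $q' = f' \oplus \langle 1,-1\rangle$, as a subgroup of the parabolic stabilizer $\Lambda_u$. Since $f'$ and $f$ are rationally equivalent, $q'$ and $q$ are too, so they define the same commensurability class of arithmetic hyperbolic manifolds. A torsion-free finite-index subgroup of $O^+(q';\Z)$ realizing $\pi_1(B)$ as the exact cusp stabilizer at $u$, obtained via Selberg's lemma together with the congruence covers argument of \cite{LongReid, McReynolds2009covers}, then gives a finite-volume arithmetic hyperbolic manifold with $B$ as a cusp cross-section.

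For (i) $\Rightarrow$ (ii), suppose $B$ is a cusp cross-section of $\Hyp^{n+1}_q / \Gamma$ with $\Gamma$ commensurable with $O^+(q;\Z)$ and $q = f \oplus \langle 1,-1\rangle$. The cusp corresponds to a parabolic fixed point $w \in \partial \Hyp^{n+1}_q$, and the stabilizer $\Gamma_w \cong \pi_1(B)$ has translation subgroup $T$ consisting of unipotent elements by Lemma~\ref{lem:translations_unipotent}(ii). Each $\gamma \in T$ has some power $\gamma^k \in \Gamma \cap O^+(q;\Z)$, which has rational entries; Lemma~\ref{lem:unipotent_rational} then forces $\gamma$ itself to be rational. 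For any non-trivial $\gamma \in T$, Lemma~\ref{lem:translations_unipotent}(i) says that the line $\R w$ is the image of the rational matrix $(\gamma - I)^2$, so $w$ is a rational point of $\partial \Hyp^{n+1}_q$. Lemma~\ref{lem:rational_transitivity} lets us conjugate by an element of $O^+(q;\Q)$ to arrange $w = u$, and such conjugation preserves commensurability with $O^+(q;\Z)$. Applying $\Phi^{-1}$ to the resulting $\Gamma_u$ produces an injective homomorphism $\overline{\rho}: \pi_1(B) \to O(f;\R) \ltimes \R^n$ with discrete image, and the rationality of $T$ established above ensures $\overline{\rho}(T) \leq \Q^n$. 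Lemma~\ref{lem:holonomy_properties}(ii) then supplies a holonomy representation with values in $O(f;\Q)$.

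For (ii) $\Rightarrow$ (iii), let $\rho: H \to O(f;\Q)$ be a holonomy representation. Since $H$ is finite, $L := \sum_{h \in H} \rho(h)\Z^n$ is a full-rank $\rho(H)$-invariant $\Z$-lattice in $\Q^n$. Choose $C \in GL_n(\Q)$ with $CL = \Z^n$ and set $\rho'(h) := C\rho(h)C^{-1}$ and $f'(v) := f(C^{-1}v)$. Then $\rho'(H) \leq GL_n(\Z) \cap O(f';\R) = O(f';\Z)$, and $f'$ is a positive-definite rational form rationally equivalent to $f$; since $\rho'$ is $\Q$-conjugate (hence $\R$-conjugate) to $\rho$, it is itself a holonomy representation of $B$. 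The essential subtlety in the whole argument lies in the (i) $\Rightarrow$ (ii) step, where one must upgrade \emph{finite-power} rationality of cusp translations to genuine rationality; this is precisely what Lemma~\ref{lem:unipotent_rational} accomplishes once Lemma~\ref{lem:translations_unipotent} identifies the relevant elements as unipotent.
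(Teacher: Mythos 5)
Your proposal is correct and follows essentially the same route as the paper: the cycle of implications is identical, and the key step (i) $\Rightarrow$ (ii) uses exactly the paper's chain of Lemma~\ref{lem:translations_unipotent}, Lemma~\ref{lem:unipotent_rational}, Lemma~\ref{lem:rational_transitivity}, $\Phi^{-1}$, and Lemma~\ref{lem:holonomy_properties}(ii). The only divergence is in (ii) $\Rightarrow$ (iii), where you conjugate the averaged invariant lattice $\sum_{h\in H}\rho(h)\Z^n$ onto $\Z^n$ rather than invoking, as the paper does, that a rational holonomy representation is conjugate by a \emph{rational} matrix to an integral one; your version is a harmless (and arguably more self-contained) variant of the same step.
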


\begin{proof}
The implication \eqref{it:cusp_cross}$\Rightarrow$\eqref{it:rational_rep} follows immediately from Theorem~\ref{thm:quasi_arith_obstruction}. 

Next we show \eqref{it:rational_rep}$\Rightarrow$\eqref{it:integral_rep}. Suppose that $\rho$ is a holonomy representation for $B$ taking values in $O(f;\Q)$. Since a holonomy representation is conjugate to an integral representation, then there is a matrix $T\in GL(n, \Q)$ such that $T^{-1} \rho T$ takes values in $GL_n(\Z)$. Thus $T^{-1} \rho T$ is a holonomy representation for $B$ taking values in $GL_n(\Z)\cap T^{-1} O(f;\Q) T$. However, $O(f';\Z)= GL_n(\Z)\cap T^{-1} O(f;\Q) T$, where $f'$ is the form satisfying $f'(x)=f(Tx)$ for all $x$.

Finally, we prove \eqref{it:integral_rep}$\Rightarrow$\eqref{it:cusp_cross}. This is the construction of \cite{LongReid}, as interpreted in Lemma~\ref{lem:holonomy_lifting}, combined with the subgroup separability results of \cite{McReynolds2009covers}.  If $B$ has a holonomy representation taking values in $O(f'; \Z)$, then Lemma~\ref{lem:holonomy_lifting} combined with the isomorphism in \eqref{eq:explicit_iso} shows that there is a subgroup of $\Pi\leq O^+(f'\oplus \langle 1,-1\rangle;\Z) \cap \Lambda_u$, where $\Pi\cong \pi_1(B)$ and $\Lambda_u$ is the parabolic subgroup fixing the point $u=(0,\dots,0,1,1)^T$. McReynolds' subgroup separability results \cite{McReynolds2009covers} imply that there is a finite index torsion free subgroup of $\Gamma \leq O^+(f'\oplus \langle 1, -1\rangle;\Z)$ such that $\Gamma \cap \Lambda_u=\Pi$. Thus $B$ arises as cusp cross-section in the commensurability class defined by $f'\oplus \langle 1, -1\rangle$. Since $f'\oplus \langle 1, -1\rangle$ and $f\oplus \langle 1, -1\rangle$ are rationally equivalent quadratic forms, they define the same commensurability class of hyperbolic manifolds.
\end{proof}

\begin{reptheorem}{thm:main_technical}
\thmmaintechnical
\end{reptheorem}
\begin{proof}
 Firstly, Theorem~\ref{thm:main_technical2} implies that if $B$ has a holonomy representation taking values in $O(f;\Q)$ for some positive definite $f$, then $B$ appears as a cusp cross-section in the commensurability class of hyperbolic manifolds defined by $f\oplus \langle 1, -1\rangle$. Consequently $B$ appears  as a cusp cross-section in the commensurability class defined by $q$ for any $q$ equivalent to $f\oplus \langle 1, -1\rangle$.

On the other hand, if $B$ appears as a cusp cross-section in the commensurability class of hyperbolic manifolds defined by $q$, then the manifolds in this class are non-compact. This implies that the form $q$ is isotropic \cite{Borel-Harish-Chandra}. However, this implies that $q$ is equivalent to $f\oplus \langle 1,-1\rangle$ for some positive definite quadratic form $f$ of rank $n$ \cite[Ch.~IV, Prop.~3]{Serre_arithmetic}. Theorem~\ref{thm:main_technical2} applies to show that $B$ has a holonomy representation taking values in $O(f;\Q)$.
\end{proof}

\section{Rational quadratic forms}
\label{s:more_quadratic_forms}
We begin by recalling some facts concerning the theory of rational quadratic forms and Hilbert symbols. For a detailed account we refer the reader to \cite{Serre_arithmetic}. For each prime $p$ we have the Hilbert symbol $(\_,\_)_p: \Q_p^\times \times \Q_p^\times \rightarrow \{\pm 1\}$, which is defined to be
\begin{equation*}
	(a,b)_p = \begin{cases}    
	 1 & \text{if $z^2 = ax^2 + by^2$ has a nonzero solution in $\Q_p$ } \\ -1 & \text{otherwise.}\end{cases}
\end{equation*}
The Hilbert symbol is clearly symmetric and satisfies the following useful properties:
\begin{enumerate}
    \item $(a,bc)_p=(a, b)_p(a,c)_p$ for all $a,b,c\in\Q_p^\times$;
    \item $(a^2, b)_p=1$ for all $a,b\in\Q_p^\times$ and
    \item $(a,-a)_p=1$ for all $a\in \Q_p^\times$.
\end{enumerate}
If we fix $a,b\in \Q^\times$ such that at least one of $a$ or $b$ is positive, then the global product formula \cite[Ch.~III, Thm.~3]{Serre_arithmetic} implies that the set of primes such that that $(a,b)_p=-1$ is finite and of even cardinality. Conversely, we have the following result.
\begin{prop}\label{prop:Hilbert_symbol_positive}
    Let $h_p\in \{\pm 1\}$ be a sequence of numbers indexed by the primes such that $h_p=-1$ for a finite even set of primes. For any $d\in\Q^\times$ such that $h_p=1$ whenever $d$ is a square in $\Q_p$, there exists $c\in \Q_{>0}$ such that $(d,c)_p=h_p$ for all $p$.
\end{prop}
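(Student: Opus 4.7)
The plan is to invoke the existence theorem for rational numbers with prescribed Hilbert symbols (Serre \cite[Ch.~III, Thm.~4]{Serre_arithmetic}), which states that for a single $a \in \Q^\times$ and a family $\epsilon_v \in \{\pm 1\}$ indexed by all places of $\Q$, there exists $x \in \Q^\times$ with $(a, x)_v = \epsilon_v$ for every $v$ if and only if (i) almost all $\epsilon_v$ are trivial, (ii) $\prod_v \epsilon_v = 1$, and (iii) for every place $v$ there is a local solution $x_v \in \Q_v^\times$ realizing $\epsilon_v$. I will apply this with $a = d$, $\epsilon_p = h_p$ at each finite prime, and $\epsilon_\infty = 1$, then adjust the sign of the resulting rational number to force positivity.

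First I verify Serre's three conditions. Condition (i) is the finiteness hypothesis on $\{p : h_p = -1\}$. For (ii), evenness of that set yields $\prod_p h_p = 1$, and together with $\epsilon_\infty = 1$ this gives the global product equal to $1$. For (iii) at a finite prime $p$: if $d$ is a square in $\Q_p$ then $(d, \cdot)_p$ is identically trivial, and the standing hypothesis forces $h_p = 1$, matching; if $d$ is not a square in $\Q_p$ then the Hilbert pairing is nondegenerate and the homomorphism $(d, \cdot)_p : \Q_p^\times/(\Q_p^\times)^2 \to \{\pm 1\}$ is surjective, so a local realizer exists. At the archimedean place, $x_\infty = 1$ trivially gives $(d, 1)_\infty = 1$.

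Serre's theorem then produces $c_0 \in \Q^\times$ with $(d, c_0)_p = h_p$ for every prime $p$ and $(d, c_0)_\infty = 1$. If $c_0 > 0$, set $c = c_0$. If $c_0 < 0$, the value $(d, c_0)_\infty = 1$ rules out $d < 0$ (since $(a, b)_\infty = -1$ as soon as both $a,b$ are negative), so necessarily $d > 0$; then $c := -d\,c_0$ is positive, and using the universal identity $(d, -d)_p = 1$ at every place we compute $(d, c)_p = (d, -d)_p (d, c_0)_p = h_p$ for every $p$, as required.

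The main obstacle, and the reason the proof needs more than a direct citation, is the positivity clause: Serre's theorem by itself gives a rational $c_0$ of uncontrolled sign, and one cannot simply replace $c_0$ by $|c_0|$ because this would rescale by $-1$, and $(d, -1)_p$ is not globally trivial. The slick fix is multiplication by $-d$, exploiting that $(d, -d)_p = 1$ at every place. (If $d$ happens to be a square in $\Q^\times$ the proposition is vacuous in the sense that all $h_p$ are forced to equal $1$ and $c = 1$ works.)
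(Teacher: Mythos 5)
Your proof is correct and follows essentially the same route as the paper: cite Serre's theorem on rationals with prescribed Hilbert symbols (using the hypothesis that $h_p=1$ whenever $d$ is a square in $\Q_p$ for local solvability), and then fix the sign by multiplying by $-d$, using $(d,-d)_p=1$ at every place. The only difference is cosmetic — you verify Serre's local conditions and prescribe $\epsilon_\infty=1$ explicitly, while the paper deduces $(c,d)_\infty=1$ from the evenness of the set where $h_p=-1$; the sign correction is identical.
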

\begin{proof}
    By \cite[Ch.~III, Thm.~4]{Serre_arithmetic}, there exists a $c\in\Q^\times$ such that $(d,c)_p=h_p$ for all $p$. Since there are an even number of $p$ such that $h_p=-1$, we have that $(c,d)_\infty=1$. Thus, if $d<0$, then $c$ is automatically positive. If $d>0$ and $c<0$, then the observation that $(c,d)_p=(c,d)_p(-d,d)_p=(-cd,d)_p$ for all primes $p$ shows that $-cd>0$ satisfies the necessary conclusions. 
\end{proof}

\subsection{Hasse-Witt invariants}
A non-degenerate rational quadratic form $q$ is determined up to rational equivalence by its signature $(r,s)$, its discriminant $d(f)\in \Q^\times / (\Q^\times)^2$ and a Hasse-Witt invariant $\epsilon_p(q)\in \{\pm 1\}$ for each prime $p$. There is also the invariant $\epsilon_\infty(q)$ which is determined by the signature, being equal to $(-1)^{\frac12 s(s-1)}$.  For a diagonal quadratic form $q=\langle a_1, \dots, a_n \rangle$, the discriminant and Hasse-Witt invariants are
\begin{equation}\label{eq:form_sum}
    \text{$d(q)=\prod_{i=1}^n a_i$ and $\epsilon_p(q)=\prod_{1\leq i<j\leq n} (a_i,a_j)_p$,}
\end{equation}
respectively. From these expressions it follows that for a pair of rational quadratic forms $f$ and $g$, we have
\[
\text{$d(f\oplus g)=d(f)d(g)$ and $\epsilon_p(f\oplus g)=\epsilon_p(f)\epsilon_p(g)(d(f),d(g))_p$.}
\]
When restricted to forms of rank at least three \cite[Ch. IV, \S~3.3]{Serre_arithmetic} yields the following conditions for the existence and uniqueness of rational quadratic forms.
\begin{theorem}
\label{thm:quadratic_classification}
    Let $r,s$ be integer such that $r+s\geq 3$ and $d\in\Q^\times/(\Q^\times)^2$. Let $h_p=\pm 1$ be a sequence indexed by the primes. Then there exists quadratic form $ q $ of signature $(r,s)$,  discriminant $ d $, signature $ (r,s) $, and Hasse-Witt invariants $ \epsilon_p(q)=h_p $ if and only if the following conditions are satisfied:
    \begin{enumerate}[(i)]
	\item $ \epsilon_p = 1 $ for almost all $ p $ and $ \prod_{p} \epsilon_p = (-1)^{\frac12 s(s-1)} $ and
	\item $ (-1)^s d>0 $.
    \end{enumerate}
    Moreover if such a form $q$ exists, then it is unique up to equivalence.\qed
\end{theorem}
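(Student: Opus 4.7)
The plan is to deduce this classical classification from the Hasse-Minkowski local-global principle combined with the local classification of quadratic forms over $\R$ and over each $\Q_p$. First I would handle uniqueness, which is essentially immediate: the Hasse-Minkowski theorem says that two non-degenerate rational quadratic forms are rationally equivalent if and only if they are equivalent over every completion of $\Q$. Over $\R$ the equivalence class is captured by signature, and over $\Q_p$ a form of fixed rank is determined by its discriminant and Hasse-Witt invariant. Hence a rational form of rank at least $3$ is determined up to rational equivalence by the data $(r,s)$, $d(q)$, and $\{\epsilon_p(q)\}_p$.

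Necessity of conditions (i) and (ii) is a direct computation. The product formula for the Hilbert symbol, together with the multiplicativity formulas in \eqref{eq:form_sum}, gives $\prod_v \epsilon_v(q) = 1$ summed over all places; separating off the archimedean factor $\epsilon_\infty(q) = (-1)^{s(s-1)/2}$ recovers (i). Condition (ii) comes from diagonalizing $q$ over $\R$, where the $s$ negative entries contribute a sign of $(-1)^s$ to the discriminant.

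For existence I would induct on the rank $n = r+s$. The base case $n=3$ is the crux: given $d$, a signature, and a family $(h_p)$ satisfying (i) and (ii), one must produce $a_1,a_2,a_3 \in \Q^\times$ with the appropriate signs, with $a_1a_2a_3 \equiv d$ modulo squares, and with $\prod_{i<j}(a_i,a_j)_p = h_p$ for every prime $p$. Proposition~\ref{prop:Hilbert_symbol_positive} — itself a consequence of the approximation theorem for Hilbert symbols \cite[Ch.~III, Thm.~4]{Serre_arithmetic} — produces a positive rational $c$ realizing a prescribed family of Hilbert symbols against $d$, and a short bookkeeping step assembles the three diagonal entries from such a $c$. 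For the inductive step, given target invariants at rank $n \geq 4$, one splits off a rank-one form $\langle a \rangle$ whose sign is dictated by the target signature, uses multiplicativity to compute the discriminant and Hasse invariants that the complementary rank-$(n-1)$ form is forced to realize, checks that these residual invariants again satisfy (i) and (ii), and invokes the inductive hypothesis.

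The main obstacle is the base case, i.e.\ passing from a locally prescribed family of Hilbert symbols to a single global diagonal form: this is precisely the content that requires the global product formula together with Dirichlet's theorem on primes in arithmetic progressions (which powers the approximation statement invoked above). Once the rank $3$ realization is in hand, both the inductive extension to higher ranks and the Hasse-Minkowski half of the argument are essentially formal, so the bulk of the genuine work is concentrated in the rank $3$ construction and in the approximation theorem it relies on.
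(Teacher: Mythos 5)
The paper does not prove this statement at all: it is quoted as a classical result with a citation to \cite[Ch.~IV, \S 3.3]{Serre_arithmetic}, which is why it carries a \verb|\qed| with no argument. Your sketch is essentially the standard proof from that reference: uniqueness via Hasse--Minkowski plus the local classification by rank, discriminant and Hasse invariant; necessity via the global product formula and the real diagonalization; existence by realizing rank $3$ and then splitting off rank-one forms. The inductive step is fine as you describe it (with $a$ chosen of the sign dictated by the signature, the residual discriminant, product condition and sign condition for rank $n-1\geq 3$ do check out), and you correctly identify the rank-$3$ realization as the place where the genuine work sits.

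One concrete caution about that base case: it is not just ``bookkeeping'' on top of Proposition~\ref{prop:Hilbert_symbol_positive} applied against $d$. Symbols of the form $(c,d)_p$ or $(c,-d)_p$ are forced to equal $1$ at any prime where the second entry is a local square, whereas the theorem imposes no such constraint on the $h_p$; for instance with signature $(2,1)$ and $d=-1$ one needs $\langle a,b,-ab\rangle$ with $\epsilon_p=(a,b)_p=-1$ at a prescribed even set of primes, which no single application of Proposition~\ref{prop:Hilbert_symbol_positive} against the fixed $d$ can produce. The correct assembly uses two parameters: for $q=\langle a,b,abd\rangle$ one computes $\epsilon_p(q)=(a,-bd)_p(b,-d)_p$, so one first chooses the auxiliary entry $b$ (via Dirichlet, i.e.\ the same input that powers \cite[Ch.~III, Thm.~4]{Serre_arithmetic}) to make $-bd$ a nonsquare in $\Q_p$ at the finitely many primes where $h_p=-1$, and only then solves for $a$ by the approximation theorem, keeping track of signs at the real place so that the diagonal entries realize the signature $(r,s)$ and $(-1)^sd>0$ is respected. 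With that correction the outline is a faithful reconstruction of Serre's argument, which is exactly what the paper is invoking.
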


\subsection{Projective equivalence}
\label{s:proj_equiv}
We can derive complete invariants for quadratic forms up to projective equivalence, by studying the behavior of the signature, discriminant, and Hasse-Witt invariants under scaling by a positive rational number $ m $.

It is clear that the signature of a form $ q $ is invariant under scaling by a positive number and that the discriminant of a rank $n$ form satisfies $ d(mq) = m^n d(q) $.  Since the discriminant lies in $ \mathbb{Q}^\times / (\mathbb{Q}^\times)^2 $, it is invariant under projective equivalence if and only if the rank $ n $ of $ q $ is even; if $ n $ is odd, any projective equivalence class contains quadratic forms of all possible discriminants.  The Hasse-Witt invariants of a scaled form depend on the value of $ n $ (mod 4).
\begin{lemma}
\label{lem:scaling_forms}
    Let $q$ be a quadratic form over $\Q$ of rank $n$, and $ m $ a positive rational number.  Then for any $ p $:
    \[\epsilon_p(mq)=
    \begin{cases}
        \epsilon_p(q) &\text{if $n\equiv 1\bmod 4$}\\
        \epsilon_p(q)(m,-d(q))_p &\text{if $n\equiv 2\bmod 4$}\\
        \epsilon_p(q)(m,-1)_p &\text{if $n\equiv 3\bmod 4$}\\
        \epsilon_p(q)(m,d(q))_p &\text{if $n\equiv 0\bmod 4$}\\
    \end{cases}
    \]
\end{lemma}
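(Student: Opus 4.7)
The plan is to compute $\epsilon_p(mq)$ directly from the formula in \eqref{eq:form_sum} by writing $q$ in diagonal form and exploiting the bilinearity of the Hilbert symbol. The main work is careful bookkeeping of exponents followed by a case analysis on $n \bmod 4$.

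First I would diagonalize $q = \langle a_1, \dots, a_n \rangle$, so that $mq = \langle ma_1, \dots, ma_n\rangle$. Then
\[
\epsilon_p(mq) = \prod_{i<j}(ma_i, ma_j)_p,
\]
and the bilinearity of the Hilbert symbol gives
\[
(ma_i, ma_j)_p = (m,m)_p \, (m,a_j)_p \, (a_i,m)_p \, (a_i,a_j)_p.
\]
Substituting this into the product and separating the four types of factors yields $\epsilon_p(q)$ in one factor and three additional factors to analyze. The factor $(m,m)_p$ appears $\binom{n}{2}$ times; using $(m,-m)_p=1$ rewrite $(m,m)_p=(m,-1)_p$, so its contribution is $(m,-1)_p^{\binom{n}{2}}$. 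For the mixed factors, for each index $k$ the symbol $(m,a_k)_p$ appears $k-1$ times from the products where $k$ plays the role of $j$ and $n-k$ times where $k$ plays the role of $i$, for a total exponent of $n-1$. Hence these factors combine to
\[
\prod_{k=1}^n (m,a_k)_p^{\,n-1} = (m,d(q))_p^{\,n-1}.
\]

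Putting these together gives the master formula
\[
\epsilon_p(mq) = \epsilon_p(q) \cdot (m,-1)_p^{\binom{n}{2}} \cdot (m,d(q))_p^{\,n-1}.
\]
The conclusion now follows by case analysis on $n \bmod 4$. One checks that $\binom{n}{2}$ is even when $n \equiv 0,1 \bmod 4$ and odd when $n\equiv 2,3 \bmod 4$, while $n-1$ is even when $n$ is odd and odd when $n$ is even. In the case $n \equiv 2 \bmod 4$ one obtains the factor $(m,-1)_p(m,d(q))_p = (m,-d(q))_p$ by multiplicativity, matching the stated formula. The other three cases drop out immediately.

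The argument involves no real obstacle; the only place to be careful is the exponent count for the mixed factors (each $a_k$ really does appear $n-1$ times) and the parity check of $\binom{n}{2}$ and $n-1$ in each residue class mod $4$.
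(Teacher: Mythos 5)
Your proposal is correct and follows essentially the same route as the paper: diagonalize $q$, expand $\prod_{i<j}(ma_i,ma_j)_p$ by multiplicativity into $\epsilon_p(q)\,(m,-1)_p^{\binom{n}{2}}\,(m,d(q))_p^{n-1}$, and finish with the parity check on $n \bmod 4$. The exponent count and the identity $(m,m)_p=(m,-1)_p$ are exactly the steps the paper uses.
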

\begin{proof}
Without loss of generality we may assume that $q=\langle a_1, \dots, a_n \rangle$ has been diagonalized. Using the definition of the Hasse-Witt invariants and multiplicativity of the Hilbert symbol yields:
    \begin{align*}
        \epsilon_p(mq) & = \prod_{i<j} (ma_i, ma_j)_p \\ 
        & = \prod_{i<j} (m,m)_p (m,a_i)_p (m,a_j)_p (a_i, a_j)_p \\ 
        & = (m,m)_p^{\frac{n(n-1)}{2}} \left( \prod_i (m, a_i)_p^{n-1} \right) \left( \prod_{i<j} (a_i, a_j)_p \right) \\ 
        & = \left( m, m \right)_p^{\frac{n(n-1)}{2}} \left( m, \prod_i a_i^{n-1} \right)_p \epsilon_p(q) \\ 
        & = \left( m, m^{\frac{n(n-1)}{2}} \right)_p \left( m, d(q)^{n-1} \right)_p \epsilon_p(q) 
    \end{align*}
    When $ n $ is odd, $ (m, d(q)^{n-1})_p = (m, 1)_p = 1 $ and when $ n $ is even, $ (m, d(q)^{n-1})_p = (m, d(q))_p $.  We can make a similar argument to simplify $ (m, m^{\frac{n(n-1)}{2}} )_p $, observing that $ \frac{n(n-1)}{2} $ is even when $ n \equiv 0 $ or $ 1 \bmod 4 $, and odd otherwise. The proposition then follows from the identity $ (m, m)_p = (m, -1)_p $.
\end{proof}

We can combine our observations about the discriminant with Lemma~\ref{lem:scaling_forms} to write down a complete set of invariants for quadratic forms over $ \mathbb{Q} $ up to projective equivalence.

\begin{prop}
\label{prop:projective_invariants}
    Let $f$ and $g$ be two quadratic forms of rank $n$ over $ \mathbb{Q} $ with the same signature.
    \begin{itemize}
        \item If $n$ is odd, then $f$ and $g$ are projectively equivalent if and only if 
        \[(d(f),(-1)^{\frac{n-1}{2}})_p\epsilon_p(f)=(d(g),(-1)^{\frac{n-1}{2}})_p\epsilon_p(g)\] for all primes $p$.
        \item If $n$ is even, then $f$ and $g$ are projectively equivalent if and only if $d(f)=d(g)=d$ and $\epsilon_p(f)=\epsilon_p(g)$ for all primes $p$ such that $d=(-1)^{\frac{n}{2}}\in \Qpsquare$
    \end{itemize}
\end{prop}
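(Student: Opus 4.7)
The plan is to use Theorem~\ref{thm:quadratic_classification} as the main tool, reducing projective equivalence to an analysis of how the three classifying invariants (signature, discriminant, Hasse-Witt invariants) transform under scaling by a positive rational $m$. The signature is obviously a projective invariant, so the problem becomes to track $d$ and the $\epsilon_p$'s through a scaling, using Lemma~\ref{lem:scaling_forms} and the multiplicativity of the Hilbert symbol. Throughout, I will repeatedly use the observation that if $f$ and $g$ have the same signature $(r,s)$, then condition (ii) of Theorem~\ref{thm:quadratic_classification} forces $d(f)$ and $d(g)$ to have the same sign in $\R$, which guarantees that any candidate scalar $m = d(g)/d(f)$ is positive.

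First I would dispose of the case when $n$ is odd. Since $m^n \equiv m \pmod{(\Q^\times)^2}$, scaling can produce any discriminant class; in particular $m = d(g)/d(f)$ is a positive rational achieving $d(mf) = d(g)$. For the Hasse-Witt side I would treat $n \equiv 1 \pmod 4$ and $n \equiv 3 \pmod 4$ simultaneously, verifying by a direct manipulation that
\[
(d(mq), (-1)^{(n-1)/2})_p\, \epsilon_p(mq) = (d(q), (-1)^{(n-1)/2})_p\, \epsilon_p(q),
\]
using Lemma~\ref{lem:scaling_forms} and the identity $(a,b^2)_p = 1$. This shows the stated quantity is a projective invariant. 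Conversely, assuming the stated equality for $f$ and $g$, I would scale $f$ by $m = d(g)/d(f)$ and check that the Hasse-Witt invariants of $mf$ now match those of $g$; this is a short Hilbert-symbol computation using $d(f)d(g) \equiv m d(f)^2 \equiv m \pmod{(\Q^\times)^2}$, so that Theorem~\ref{thm:quadratic_classification} forces rational equivalence of $mf$ and $g$.

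Next I would handle the even case. Here $m^n \equiv 1 \pmod{(\Q^\times)^2}$, so $d$ is automatically a projective invariant and the two forms must already share the same discriminant $d$. Unifying the two subcases of Lemma~\ref{lem:scaling_forms}, one has
\[
\epsilon_p(mq) = \epsilon_p(q)\, (m,\, (-1)^{n/2} d(q))_p,
\]
so $\epsilon_p(mq) = \epsilon_p(q)$ at every prime where $(-1)^{n/2} d$ is a square in $\Q_p$. That gives the forward direction. For the converse, I would set $h_p = \epsilon_p(f)\epsilon_p(g)$; this is $+1$ at the primes singled out by the hypothesis, equal to $+1$ for almost all $p$ (since each $\epsilon_p$ is), and by the product formula in Theorem~\ref{thm:quadratic_classification} the product $\prod_p h_p$ collapses to $((-1)^{s(s-1)/2})^2 = 1$ because $f$ and $g$ have the same signature. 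These are precisely the hypotheses of Proposition~\ref{prop:Hilbert_symbol_positive} applied to the element $(-1)^{n/2} d$, yielding a positive $m \in \Q_{>0}$ with $(m, (-1)^{n/2} d)_p = h_p$ for all $p$. Then $mf$ and $g$ share signature, discriminant, and all Hasse-Witt invariants, so Theorem~\ref{thm:quadratic_classification} gives rational equivalence.

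The main obstacle is the converse direction in the even case: one must manufacture a \emph{positive} rational $m$ realising a prescribed collection of Hilbert symbols. This is exactly what Proposition~\ref{prop:Hilbert_symbol_positive} supplies, but I need to confirm its hypotheses hold, and the verification that $\prod_p h_p = 1$ is where the common signature of $f$ and $g$ enters in a non-cosmetic way. The other potential pitfall is bookkeeping the two parity subcases in Lemma~\ref{lem:scaling_forms} without accidentally dropping a factor of $(-1)$; packaging them uniformly through $(-1)^{n/2}$ (for even $n$) and $(-1)^{(n-1)/2}$ (for odd $n$) keeps the argument clean.
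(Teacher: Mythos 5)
Your proposal is correct and follows essentially the same route as the paper: track the invariants through Lemma~\ref{lem:scaling_forms}, scale by $m=d(f)d(g)$ (equivalently $d(g)/d(f)$) in the odd case, and in the even case use Proposition~\ref{prop:Hilbert_symbol_positive} to produce a positive $m$ with $(m,(-1)^{n/2}d)_p=\epsilon_p(f)\epsilon_p(g)$, then conclude by the classification of rational forms. Your explicit check via the product formula that $h_p=-1$ at a finite even set of primes is a point the paper leaves implicit, but it is the same argument.
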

\begin{proof}
If $n$ is odd, then it follows from Lemma~\ref{lem:scaling_forms} that the quantity $(d(f),(-1)^{\frac{n-1}{2}})_p\epsilon_p(f)$ is an invariant of the projective equivalence class. Conversely, if $f$ and $g$ have the same signature, then $d(f)$ and $d(g)$ have the same sign. In particular, $m=d(f)d(g)$ is positive. For such an $m$ we have that $d(mf)=d(g)$. For the Hasse-Witt invariants, we see that
\begin{align*}
    \epsilon_p(mf)&=(d(f)d(g), (-1)^{\frac{n-1}{2}})_p \epsilon_p(f)\\
    &=(d(f), (-1)^{\frac{n-1}{2}}) (d(g), (-1)^{\frac{n-1}{2}})_p \epsilon_p(f),
\end{align*}
for all primes.
Thus $mf$ is equivalent to $g$ if $(d(f),(-1)^{\frac{n-1}{2}})_p\epsilon_p(f)=(d(g),(-1)^{\frac{n-1}{2}})_p\epsilon_p(g)$ for all primes.

If $n$ is even, then the invariance of the discriminant and the stated Hasse-Witt invariants follows from Lemma~\ref{lem:scaling_forms} and the preceding discussion.
Conversely suppose that $f$ and $g$ have the same signature, discriminant $d$ and that $\epsilon_p(f)=\epsilon_p(g)$ for all primes $p$ such that $(-1)^{\frac{n}{2}}d$ is a square in $\Q_p$.

If we take $h_p=\epsilon_p(f)\epsilon_p(g)$ for all primes $p$, then this is a sequence such that $h_p=1$, whenever $(-1)^{\frac{n}{2}}d$ is a square in $\Q_p$. It follows from Proposition~\ref{prop:Hilbert_symbol_positive} that there exists positive $m\in \Q_{>0}$ such that $(m,(-1)^{\frac{n}{2}}d)_p=\epsilon_p(f)\epsilon_p(g)$ for all $p$. For such an $m$, we see that $mf$ and $g$ have the same Hasse-Witt invariants and are consequently equivalent forms.
\end{proof}

\subsection{Combining forms}
The aim of this section is to show that given three odd-dimensional positive-definite rational quadratic forms $f_1,f_2,f_3$, any positive definite rational form of the appropriate rank is equivalent to $a_1f_1 \oplus a_2 f_2 \oplus a_3 f_3$ for some $a_1,a_2,a_3\in \Q_{>0}$. 
\begin{lemma}
\label{lem:generalized_serre}
    Fix $ d \in \Q_{>0} $ a positive rational, fix $x,y,z \in \Q^\times$ nonzero rationals, and fix $h_p\in \{\pm 1\}$ a sequence of numbers indexed by the primes such that $h_p=-1$ for a finite even set of primes. Then there exist $a, b, c \in \Q_{>0}$ such that $ abc = d \in \mathbb{Q}^\times / (\mathbb{Q}^\times)^2 $ and for each $ p $,
    \begin{equation*}
        h_p = \epsilon_p(\langle a, b, c \rangle) (a, x)_p (b, y)_p (c, z)_p.
    \end{equation*}
\end{lemma}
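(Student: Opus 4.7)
The plan is to use the constraint $abc \equiv d$ modulo $(\Q^\times)^2$ to eliminate $c$ and reduce to a two-variable problem that I attack by a double application of Proposition~\ref{prop:Hilbert_symbol_positive}. Since $abc \equiv d$ modulo squares and $1/(ab) \equiv ab$ modulo squares, we have $c \equiv dab$ modulo squares. Using $(a,a)_p = (a,-1)_p$ and bilinearity of the Hilbert symbol, a direct computation gives
\begin{equation*}
\epsilon_p(\langle a,b,c\rangle) = (a,b)_p (a,c)_p (b,c)_p = (a,b)_p (a,-d)_p (b,-d)_p,
\end{equation*}
together with $(c,z)_p = (d,z)_p (a,z)_p (b,z)_p$. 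Substituting into the target identity $h_p = \epsilon_p(\langle a,b,c\rangle)(a,x)_p(b,y)_p(c,z)_p$ and grouping the factors involving $a$ and $b$ yields
\begin{equation*}
h_p (d,z)_p = (a, -bdxz)_p \cdot (b, -dyz)_p \quad \text{for all primes } p.
\end{equation*}
Writing $h'_p := h_p (d,z)_p$, the problem becomes that of finding $a, b \in \Q_{>0}$ making this identity hold at every prime.

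I would pick $b$ first and then invoke Proposition~\ref{prop:Hilbert_symbol_positive} to produce $a$. Given a choice of $b$, set $\alpha_p = h'_p (b, -dyz)_p$; the goal becomes $(a, -bdxz)_p = \alpha_p$ for all $p$. By Proposition~\ref{prop:Hilbert_symbol_positive}, such an $a \in \Q_{>0}$ exists provided $\alpha_p = 1$ whenever $-bdxz$ is a square in $\Q_p$. But $-bdxz \in (\Q_p^\times)^2$ means $b \equiv -dxz$ modulo squares in $\Q_p$; in that case $(b, -dyz)_p = (-dxz, -dyz)_p$, and the requirement collapses to $h'_p = (-dxz, -dyz)_p$. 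Let $S$ denote the finite set of primes where this equality fails. By weak approximation I can find $b \in \Q_{>0}$ whose class modulo squares in $\Q_p$ is not $[-dxz]$ for any $p \in S$, since each local square class group has at least four elements and so an allowed class is always available at each bad prime.

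With $b$ so chosen, the sequence $\alpha_p$ equals $-1$ on a finite set of primes and satisfies $\prod_p \alpha_p = 1$: the global product formula gives $\prod_p (d,z)_p = \prod_p (b,-dyz)_p = 1$, because the infinite-place factors $(d,z)_\infty$ and $(b,-dyz)_\infty$ both equal $1$ (as $d, b > 0$), and $\prod_p h_p = 1$ holds by hypothesis. Proposition~\ref{prop:Hilbert_symbol_positive} then produces $a \in \Q_{>0}$ with $(a, -bdxz)_p = \alpha_p$ for all $p$. Finally, let $c$ be any positive rational representative of $d/(ab) \in \Q^\times/(\Q^\times)^2$, which is positive because $d, a, b > 0$; this completes the construction. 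The main technical step is the weak-approximation choice of $b$ that avoids $[-dxz]$ at the finitely many primes of $S$. Once that is done, the algebraic reductions and the final application of Proposition~\ref{prop:Hilbert_symbol_positive} are essentially mechanical.
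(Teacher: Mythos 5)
Your proof is correct, but it takes a genuinely different route from the paper's. The paper sets $h'_p = \epsilon_p(\langle x,y,z\rangle)(d,xyz)_p\,h_p$ and invokes the classification theorem for rational quadratic forms (Theorem~\ref{thm:quadratic_classification}) to produce a ternary form $\langle a',b',c'\rangle$ with prescribed discriminant $dxyz$, Hasse--Witt invariants $h'_p$, and the same signature as $\langle x,y,z\rangle$; it then sets $a=a'x$, $b=b'y$, $c=c'z$ and verifies the identity by a symbol computation. You instead eliminate $c\equiv dab$ outright, reduce the target identity to the two-variable equation $(a,-bdxz)_p(b,-dyz)_p=h_p(d,z)_p$, choose $b$ by weak approximation so as to avoid the square class of $-dxz$ at the finitely many primes where $h_p(d,z)_p\neq(-dxz,-dyz)_p$, and then solve for $a$ with Proposition~\ref{prop:Hilbert_symbol_positive}. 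Your symbol manipulations check out (in particular $\epsilon_p(\langle a,b,c\rangle)=(a,b)_p(a,-d)_p(b,-d)_p$ when $c\equiv dab$), the set $S$ is indeed finite, the product-formula verification that $\prod_p\alpha_p=1$ correctly uses $d,b>0$ to kill the archimedean factors, and the final positivity of $c$ is immediate. The trade-off: the paper's argument is shorter but buries the work inside the existence half of the classification of ternary forms, whereas yours uses only the surjectivity statement for Hilbert symbols plus weak approximation, and makes visible exactly where the evenness hypothesis on $\{p: h_p=-1\}$ and the positivity of $d$ enter. Both ultimately rest on the same global input (Serre, Ch.~III, Thm.~4).
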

\begin{proof}
    For each prime $p$, let 
    \[h^\prime_p = \epsilon_p(\langle x, y, z \rangle) (d, xyz)_p h_p.\]
    Theorem~\ref{thm:quadratic_classification} implies that there exist rationals $a^\prime, b^\prime, c^\prime\in \Q^\times$ such that the form $\langle a^\prime, b^\prime, c^\prime \rangle$ has Hasse-Witt invariants $\epsilon_p=h^\prime_p$ for every $p$, discriminant $dxyz$ and the same signature as $\langle x, y, z \rangle$. Since $\langle a^\prime, b^\prime, c^\prime \rangle$ and $\langle x, y, z \rangle$ have the same signature, we can assume that the quantities $ a = a^\prime x $, $ b = b^\prime y $, and $ c = c^\prime z $ are all positive. By construction, we have that
    \begin{equation*}
        abc = (a^\prime x)(b^\prime y)(c^\prime z) = (xyz)(xyzd) = d \in \mathbb{Q}^\times / (\mathbb{Q}^\times)^2,
    \end{equation*}
    Furthermore, we calculate that for any $p$ we have
    \begin{align*}
        \epsilon_p(\langle a, b, c \rangle) (a,x)_p (b,y)_p (c,z)_p & = (a,b)_p (a,c)_p (b,c)_p (a,x)_p (b,y)_p (c,z)_p \\
        & = (ax,by)_p (x,y)_p (ax,cz)_p (x,z)_p (by,cz)_p (y,z)_p (abc,xyz)_p \\
        & = \epsilon_p(\langle ax, by, cz \rangle) \epsilon_p(\langle x, y, z \rangle) (d, xyz)_p\\
        &=\epsilon_p(\langle a', b', c' \rangle) \epsilon_p(\langle x, y, z \rangle) (d, xyz)_p\\
        &=h'_p \epsilon_p(\langle x, y, z \rangle) (d, xyz)_p=h_p,
    \end{align*}
   as required.
\end{proof}

\begin{prop}\label{prop:all_forms}
    Let $ f_i $ be a positive definite rational quadratic form of odd rank $ r_i $ for $ i = 1,2,3 $ and let $g$ be a rational quadratic form of signature $(r,s)$. Then for any quadratic form $q$ of signature $ (r_1 + r_2 + r_3 + r,s)$, there exist positive rationals $ a_i $ such that $ q^\prime = a_1 f_1 \oplus a_2 f_2 \oplus a_3 f_3 \oplus g $ is rationally equivalent to $ q $.
\end{prop}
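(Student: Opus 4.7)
The plan is to reduce the problem to matching the invariants of Theorem~\ref{thm:quadratic_classification}, and then to apply Lemma~\ref{lem:generalized_serre} to realize those invariants by a suitable choice of the positive scalars $a_i$.

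First, since each $a_i$ is positive and $f_i$ is positive definite, the form $q'=a_1 f_1 \oplus a_2 f_2 \oplus a_3 f_3 \oplus g$ has signature $(r_1+r_2+r_3+r, s)$, matching $q$ automatically. By Theorem~\ref{thm:quadratic_classification}, it therefore suffices to pick the $a_i$ so that $d(q')=d(q)$ in $\Q^\times/(\Q^\times)^2$ and $\epsilon_p(q')=\epsilon_p(q)$ for every prime $p$. Writing $F=a_1 f_1 \oplus a_2 f_2 \oplus a_3 f_3$, the multiplicativity formulas $d(F\oplus g)=d(F)d(g)$ and $\epsilon_p(F\oplus g)=\epsilon_p(F)\epsilon_p(g)(d(F),d(g))_p$ let me translate the desired equalities into prescribed values for $d(F)$ and $\epsilon_p(F)$ depending only on the fixed forms $f_1,f_2,f_3, g$ and on $q$. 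In particular, writing $D_i=d(f_i)$ and using that each $r_i$ is odd so $a_i^{r_i}\equiv a_i$ modulo squares, the condition on $d(F)$ becomes $a_1 a_2 a_3 \equiv d \pmod{(\Q^\times)^2}$ for a specific $d\in\Q^\times$; this $d$ is positive because $d(f_i)>0$ and $d(q)/d(g)$ has sign $(-1)^s/(-1)^s=+1$.

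Next I carry out the Hasse-Witt calculation. Using Lemma~\ref{lem:scaling_forms} in the odd-rank cases, $\epsilon_p(a_i f_i)=\epsilon_p(f_i)(a_i,e_i)_p$ where $e_i=1$ if $r_i\equiv1\bmod4$ and $e_i=-1$ if $r_i\equiv3\bmod 4$. Expanding $\epsilon_p(F)$ via the orthogonal sum formula and collecting terms in the Hilbert symbol yields
\[
\epsilon_p(F)=\epsilon_p(\langle a_1,a_2,a_3\rangle)\,(a_1,X_1)_p(a_2,X_2)_p(a_3,X_3)_p\cdot C_p,
\]
where $X_1=e_1 D_2 D_3$, $X_2=e_2 D_1 D_3$, $X_3=e_3 D_1 D_2$, and $C_p=\epsilon_p(f_1)\epsilon_p(f_2)\epsilon_p(f_3)\,\epsilon_p(\langle D_1,D_2,D_3\rangle)$ depends only on the $f_i$. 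The desired value of $\epsilon_p(F)$, call it $H_p$, is read off from the earlier translation. Setting $h_p = H_p/C_p$, the problem reduces to finding $a_1,a_2,a_3\in\Q_{>0}$ with $a_1 a_2 a_3\equiv d\pmod{(\Q^\times)^2}$ and
\[
h_p=\epsilon_p(\langle a_1,a_2,a_3\rangle)(a_1,X_1)_p(a_2,X_2)_p(a_3,X_3)_p
\]
for every $p$, which is exactly the conclusion of Lemma~\ref{lem:generalized_serre}.

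The main obstacle, and the reason this rewriting is worth doing, is verifying that the $h_p$ satisfy the hypotheses of Lemma~\ref{lem:generalized_serre}: that all but finitely many $h_p$ equal $1$ and that $\prod_p h_p=1$. Finiteness is clear since each factor in $H_p$ and $C_p$ is trivial for almost all $p$. For the parity, I apply the product formula $\prod_p\epsilon_p=(-1)^{s(s-1)/2}$ from Theorem~\ref{thm:quadratic_classification}: the contributions from $\epsilon_p(q)$ and $\epsilon_p(g)$ in $H_p$ cancel because $q$ and $g$ have the same negative part $s$, and the contributions from $\epsilon_p(f_i)$ and $\epsilon_p(\langle D_1,D_2,D_3\rangle)$ in $C_p$ are trivial because these are positive-definite forms. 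The remaining term $\prod_p(d(F),d(g))_p$ equals $(d(F),d(g))_\infty=1$ by Hilbert reciprocity and positivity of $d(F)$. Thus $\prod_p h_p=1$, Lemma~\ref{lem:generalized_serre} produces the required $a_1,a_2,a_3$, and back-substitution gives $q'$ rationally equivalent to $q$.
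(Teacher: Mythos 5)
Your proof is correct and follows essentially the same route as the paper: reduce to matching the signature, discriminant, and Hasse--Witt invariants via the classification theorem, translate the required invariants of $a_1f_1\oplus a_2f_2\oplus a_3f_3$ into the hypotheses of Lemma~\ref{lem:generalized_serre}, and check the parity condition with the product formula. The only difference is bookkeeping: the paper first rescales so that $d(f_i)=1$, which collapses your $X_i$ to $(-1)^{(r_i-1)/2}$ and removes the $\epsilon_p(\langle D_1,D_2,D_3\rangle)$ factor, whereas you carry the discriminants $D_i$ through the computation.
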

\begin{proof}
Since the $f_i$ are odd-dimensional, we may rescale them by a positive rational in order to assume that $d(f_i)=1$. Let $\delta_i=(-1)^{\frac{r_i-1}{2}}$.
Note that $d(g)$ and $d(q)$ both have the same sign as $(-1)^s$, thus $d(g)d(q)$ is positive and $(-d(q),d(g))_\infty=1$. Additionally we have that $\epsilon_\infty(g)=\epsilon_\infty(q)$ and $\epsilon_\infty(f_1)=\epsilon_\infty(f_2)=\epsilon_\infty(f_3)=1$ for signature reasons.
Thus if we set
\[
h_p=\epsilon_p(g)\epsilon_p(q) \epsilon_p(f_1)\epsilon_p(f_2)\epsilon_p(f_3)(-d(q),d(g))_p,
\]
the global product formula for the Hilbert symbol implies that $h_p$ takes the value -1 at a finite even number of primes. Thus Lemma~\ref{lem:generalized_serre} applies to give $a_1, a_2, a_3$ positive rationals such that
\[
\epsilon_p(\langle a_1, a_2, a_3 \rangle)(a_1,\delta_1)_p (a_2,\delta_2)_p (a_3,\delta_3)_p=h_p
\]
for all primes $p$ and $a_1a_2a_3=d(g)d(q)$. Using this choice of $a_1,a_2,a_3$, we set
\[
q'=a_1 f_1 \oplus a_2 f_2 \oplus a_3 f_3 \oplus g.
\]
Since the $f_i$ are all positive definite, the signatures of $ q $ and $ q^\prime $ will both be $ (r_1 + r_2 + r_3 + r,s) $. The discriminant satisfies
    \[
    d(q')=a_1a_2a_3 d(g)=d(q).
    \]
It remains to verify that the Hasse-Witt invariants of $q'$ and $q$ coincide. Using \eqref{eq:form_sum} and Lemma~\ref{lem:scaling_forms} we calculate the Hasse-Witt invariants of $ q^\prime $.  
    We first calculate that
\begin{align*}
        \epsilon_p(a_1 f_1 \oplus a_2 f_2 \oplus a_3 f_3) &= \epsilon_p(a_1 f_1)\epsilon_p(a_2 f_2 \oplus a_3 f_3)(a_1, a_2a_3)_p\\
        & = \epsilon_p(a_1 f_1)\epsilon_p(a_2 f_2 ) \epsilon_p(a_3 f_3)(a_1, a_2a_3)_p(a_2,a_3)_p\\
        & =\epsilon_p(\langle a_1, a_2, a_3 \rangle) (a_1, \delta_1)_p (a_2, \delta_2)_p (a_3, \delta_3)_p \epsilon_p(f_1)\epsilon_p(f_2 ) \epsilon_p(f_3)\\
        &=h_p\epsilon_p(f_1)\epsilon_p(f_2 ) \epsilon_p(f_3)\\
        &=\epsilon_p(g)\epsilon_p(q)(-d(q),d(g))_p.
\end{align*}
This in turn yields
\begin{align*}
        \epsilon_p(q^\prime) & = \epsilon_p(a_1 f_1 \oplus a_2 f_2 \oplus a_3 f_3)\epsilon_p(g)(a_1a_2a_3,d(g))_p \\
        & = \epsilon_p(g)\epsilon_p(q)(-d(q),d(g))_p \epsilon_p(g)(a_1a_2a_3,d(g))_p \\
        & = \epsilon_p(q)(-d(q),d(g))_p (d(q)d(g),d(g))_p\\
        &=\epsilon_p(q).
    \end{align*}
    Thus $q$ and $q'$ are equivalent rational quadratic forms.    
\end{proof}

\section{Orthogonal representations}
\label{s:rep_theory}
Throughout this section, $\rho:G\rightarrow GL_n(\F)$ will be a finite-dimensional representation of a finite group $G$ over a field $\F$ of characteristic 0. Given such a $\rho$, we will use $\calQ(\rho)$ to denote the set of all quadratic forms $q$ over $\F$ such that $\rho(G)\leq O(q;\F)$. Clearly the quadratic forms in $\calQ(\rho)$ are precisely those defined by a symmetric matrix $Q$ satisfying
\[
\text{$\rho(g)^TQ\rho(g)=Q$ for all $g\in G$.}
\]
We observe also that if $\rho'$ and $\rho$ are conjugate representations, then $\calQ(\rho)$ and $\calQ(\rho')$ contain exactly the same equivalence classes of quadratic forms. It follows from Proposition~\ref{prop:equivalence_implies_conjugation} that if $A^{-1}\rho A= \rho'$ for $A\in GL_n(\F)$, then a quadratic form $q$ is in $\calQ(\rho)$ if and only if the quadratic form $q'$ defined by $q'(v)=q(Av)$ is in $\calQ(\rho')$.

In order to apply Theorem~\ref{thm:main_technical} we will be particularly interested in representations over $\Q$.

For a rational representation $\rho:G\rightarrow GL_n(\Q)$, we always have a positive definite quadratic form in $\calQ(\rho)$. For example, we can obtain such a quadratic form by averaging the standard inner product:
\[
q(v)=\frac{1}{|G|}\sum_{g\in G} (\rho(g)v)^T (\rho(g)v).
\]
For rational representations the  most important conclusion of this section is that one can easily study $\calQ(\rho)$ by decomposing $\rho$ into sub-representations.

\begin{prop}\label{prop:rational_rep_decomp}
    Let $\rho:G\rightarrow GL_n(\Q)$ be a representation with a decomposition into irreducible representations over $\Q$ of the form
    \[
    \rho \cong \rho_1 \oplus \dots \oplus \rho_k.
    \]
    Then any non-degenerate quadratic form $q\in \calQ(\rho)$ is equivalent to one of the form
    \[
    q\cong q_1 \oplus \dots \oplus q_k
    \]
    where $q_i\in \calQ(V_i)$ is non-degenerate for $i=1,\dots, k$.
\end{prop}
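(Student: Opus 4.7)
The plan is to proceed by induction on $k$, with the base case $k=1$ immediate. For the inductive step, the central task is to find a $G$-submodule $V_1'\subseteq V$ such that $V_1' \cong V_1$ as $G$-modules and $q|_{V_1'}$ is non-degenerate. Once such a $V_1'$ is located, its orthogonal complement $(V_1')^\perp$ is a $G$-invariant subspace on which $q$ restricts non-degenerately, and by the Krull--Schmidt theorem it is $G$-isomorphic to $V_2\oplus\cdots\oplus V_k$. Applying the inductive hypothesis to $(V_1')^\perp$ then yields the decomposition $q\cong q_1\oplus\cdots\oplus q_k$, where $q_1$ is $q|_{V_1'}$ transported to $V_1$ via any chosen $G$-isomorphism.

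If $q|_{V_1}$ is non-degenerate, simply take $V_1' = V_1$. Otherwise the radical of $q|_{V_1}$ is a non-zero $G$-submodule of the irreducible $V_1$ and must equal $V_1$, so $V_1$ is totally isotropic. Non-degeneracy of $q$ forces $V_1^\perp\neq V$, so we may pick a $G$-invariant complement $\widetilde U$ of $V_1^\perp$ in $V$. The pairing $V_1\times\widetilde U\to\Q$ induced by $q$ is $G$-equivariant and non-degenerate, so Schur's lemma gives $\widetilde U\cong V_1^*$ as $G$-modules. Averaging the standard inner product over $G$ produces a positive-definite $G$-invariant bilinear form on $V_1$, witnessing $V_1\cong V_1^*$; hence $\widetilde U\cong V_1$, and we may search for $V_1'$ inside the non-degenerate subspace $V_1\oplus\widetilde U$.

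Fix a $G$-isomorphism $\phi:V_1\to\widetilde U$ and set $D=\End_G(V_1)$, a division algebra over $\Q$. For $d\in D$, the subspace $V_1'(d) := \{v+\phi(dv): v\in V_1\}$ is a $G$-submodule isomorphic to $V_1$, and the restriction $q|_{V_1'(d)}$, pulled back to $V_1$, is a $G$-invariant symmetric bilinear form. Since $V_1$ is irreducible, any such form is either zero or non-degenerate, so it suffices to produce $d\in D$ for which the pulled-back form is non-zero.

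The main obstacle is this non-vanishing step. A direct computation shows that $q|_{V_1'(d)}(v,w) = \beta_d(v,w)+\beta_d(w,v) + q|_{\widetilde U}(\phi(dv),\phi(dw))$, where $\beta_d(v,w) = q(v,\phi(dw))$; the first two terms are odd in $d$ and the last is even. If this form vanishes for every $d\in D$, replacing $d$ by $-d$ and separating the even and odd parts shows that $q|_{\widetilde U}\equiv 0$ and that every $\beta_d$ is anti-symmetric. However $d\mapsto\beta_d$ is an injective linear map from $D$ into the space of $G$-invariant bilinear forms on $V_1$, and a Schur-theoretic dimension count (both sides have $\Q$-dimension $\dim_\Q D$ once an isomorphism $V_1\cong V_1^*$ is fixed) shows it is a linear isomorphism. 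This would force every $G$-invariant bilinear form on $V_1$ to be anti-symmetric, contradicting the symmetric positive-definite form produced by averaging. The resulting contradiction yields the required $d$, and the induction then closes.
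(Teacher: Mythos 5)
Your proof is correct, but it is organized quite differently from the paper's. The paper deduces the proposition from two block-matrix lemmas: Lemma~\ref{lem:cross_terms} (no nonzero invariant pairing between non-isomorphic irreducibles, since rational irreducibles are self-dual) and Lemma~\ref{lem:cross_terms_real} (a block-diagonalization within each isotypic piece $\rho^{\oplus m}$, using the endomorphism algebra via Proposition~\ref{prop:endo_algebra} and the existence of a symmetric invariant form). You instead run a single coordinate-free induction that peels off one irreducible summand at a time: locate a $G$-copy of $V_1$ on which $q$ is non-degenerate --- taking a graph submodule $\{v+\phi(dv)\}$ over a dual partner $\widetilde U$ when $V_1$ is totally isotropic --- then split off its orthogonal complement and invoke Krull--Schmidt; this lets you bypass the cross-term lemma and the isotypic grouping entirely. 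The essential mechanism is shared (irreducibility forces invariant forms to be zero or non-degenerate; the endomorphism algebra parametrizes invariant pairings; averaging supplies a symmetric invariant form; a shear makes an isotropic block non-degenerate), and your construction $V_1'(d)$ is exactly the intrinsic version of the paper's base change by $\left(\begin{smallmatrix} I & 0\\ C & I\end{smallmatrix}\right)$ in Lemma~\ref{lem:cross_terms_real}. Where you argue the non-vanishing by contradiction plus a dimension count (all $\beta_d$ antisymmetric would force every invariant form on $V_1$ to be antisymmetric), the paper simply takes $C=M^{-1}S$ with $S$ the symmetric invariant form, so that the new diagonal block is $2S\neq 0$; you could streamline your step the same way, or by replacing $d$ with $td$, $t\in\Q$, and comparing coefficients of $t$ and $t^2$. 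The trade-off is that your argument is self-contained and shorter for this proposition alone, while the paper's block lemmas are stated for general fields and for complex and quaternionic irreducibles because they are reused later (Lemmas~\ref{lem:cross_terms_complex}, \ref{lem:cross_terms_quat} and Proposition~\ref{prop:rational_complex_only}), which your rational-specific induction would not replace.
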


\subsection{Invariant bilinear forms}
In order to study $\calQ(\rho)$, it will be convenient to expand the discussion to include more general bilinear pairings.
Given a representation $\rho:G\rightarrow GL_n(\F)$, a \emph{$\rho$-invariant (bilinear) pairing} is a bilinear pairing $b:\F^n\times \F^n \rightarrow \F$ such that $b(\rho(g)v,\rho(g)w)=b(v,w)$ for all $v,w\in \F^n$ and all $g\in G$. Given the natural equivalence between symmetric bilinear forms and quadratic forms, the elements of $\calQ(\rho)$ correspond to symmetric $\rho$-invariant pairings.

Recall that given a representation $\rho$, there is a dual representation $\rho^*$ defined by 
\[\rho^*(g)=\rho(g^{-1})^T.
\]
The representation $\rho$ is self-dual if it isomorphic to its dual.
\begin{lemma}\label{lem:self_dual}
   Let $\rho:G\rightarrow GL_n(\F)$ be a representation.
   \begin{enumerate}[(a)]
       \item\label{it:self_dual} There is a non-degenerate invariant pairing on $\rho$ if and only if $\rho$ is self-dual.
       \item\label{it:non_degene} If $\rho$ is irreducible, then a $\rho$-invariant pairing is non-degenerate if and only if it is non-zero.
   \end{enumerate}
\end{lemma}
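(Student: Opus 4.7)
The plan is to translate $\rho$-invariant pairings into representation-theoretic morphisms and then invoke Schur's lemma. The key dictionary is this: a bilinear pairing $b:\F^n\times \F^n\to \F$ is given by $b(v,w)=v^TQw$ for a unique matrix $Q$, and $b$ is $\rho$-invariant if and only if $\rho(g)^TQ\rho(g)=Q$ for all $g\in G$. Rearranging this identity using the definition $\rho^*(g)=\rho(g^{-1})^T$, it becomes
\[
Q\rho(g)=\rho^*(g)Q \quad\text{for all } g\in G,
\]
i.e., the matrix $Q$ is an intertwiner between the representations $\rho$ and $\rho^*$. Non-degeneracy of $b$ corresponds exactly to invertibility of $Q$, which in turn corresponds to $Q$ being an isomorphism of $G$-representations.

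With this dictionary, part (a) is immediate. A non-degenerate invariant pairing exists if and only if there is an invertible intertwiner $\rho\to \rho^*$, which is precisely the statement that $\rho$ is self-dual. I would spend a sentence or two verifying the matrix manipulation carefully, since the asymmetry of $\rho^*$ is the only place where one could slip up.

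For part (b), I would use Schur's lemma. If $\rho$ is irreducible then so is the dual $\rho^*$ (the invariant subspaces of $\rho^*$ correspond to quotient spaces of $\rho$, or equivalently the annihilators of invariant subspaces of $\rho$). Then any non-zero intertwiner between two irreducible representations is an isomorphism: its kernel is an invariant subspace of $\rho$ and its image is an invariant subspace of $\rho^*$, and the only options are $0$ or the whole space. Translating back through the dictionary above, a non-zero $\rho$-invariant pairing corresponds to a non-zero $Q$, which must then be invertible, hence the pairing is non-degenerate. Conversely, a non-degenerate pairing is certainly non-zero.

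There is no real obstacle here beyond the bookkeeping of conventions: I just need to be careful that the transpose in the definition of $\rho^*$ lines up correctly with the transpose coming from $b(\rho(g)v,\rho(g)w)=v^T\rho(g)^TQ\rho(g)w$, and that Schur's lemma is being applied between two genuinely irreducible representations (namely $\rho$ and $\rho^*$, not $\rho$ and itself).
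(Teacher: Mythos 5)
Your proposal is correct and follows essentially the same route as the paper: both translate an invariant pairing into a matrix $Q$ satisfying $\rho(g)^TQ\rho(g)=Q$, identify non-degeneracy with invertibility of $Q$ (equivalently, $Q$ conjugating $\rho$ to $\rho^*$) for part (a), and use irreducibility to force a non-zero $Q$ to be invertible for part (b). The only cosmetic difference is that the paper argues directly that the null-space of $Q$ is a $\rho$-invariant subspace, whereas you package the same observation as Schur's lemma applied to the intertwiner $Q:\rho\to\rho^*$.
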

\begin{proof}
   A bilinear pairing $b:\F^n\times \F^n \rightarrow \F$ is $\rho$-invariant if and only if the matrix $B$ such that $b(v,w)=v^TBw$ satisfies
   \begin{equation}\label{eq:invariant_pairing1}
    \text{$\rho(g)^T B \rho(g)=B$ for all $g\in G$.}
\end{equation}
However, $b$ is non-degenerate if and only if $B$ is invertible and so the existence of a non-degenerate $\rho$-invariant pairing is equivalent to the existence of an invertible matrix $B$ such that $\rho^*=B\rho B^{-1}$. This yields \eqref{it:self_dual}.

Given a matrix $B$ satisfying \eqref{eq:invariant_pairing1}, we see that the null-space of $B$ is a $\rho$-invariant subspace of $\F^n$. If $B$ is non-zero and $\rho$ is irreducible, then this implies that the null-space of $B$ is trivial. This in turn implies that $B$ is invertible and that the corresponding form $b$ is non-degenerate.
\end{proof}
The argument in Lemma~\ref{lem:self_dual} can easily be adapted to yield the following lemma.
\begin{lemma}\label{lem:cross_terms}
    Let $\rho: G\rightarrow GL_n(\F)$ and $\sigma:G\rightarrow GL_m(\F)$ be two irreducible representations. Then there exists a non-zero bilinear pairing $b: \F^n \times \F^m \rightarrow \F$ such that
    \[
    \text{$b(\rho(g)v,\sigma(g)w)=b(v,w)$ for all $v\in \F^n$, $w\in \F^m$ and $g\in G$}
    \]
    if and only if $\sigma$ is isomorphic to $\rho^*$. \qed
\end{lemma}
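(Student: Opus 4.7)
The plan is to translate the existence of an invariant bilinear pairing into the existence of a nonzero intertwining operator between $\sigma$ and $\rho^*$, and then apply Schur's lemma, exactly in the spirit of the proof of Lemma~\ref{lem:self_dual} that was just carried out. First I would write the pairing $b$ in matrix form: any bilinear pairing $\F^n\times\F^m\to\F$ is of the shape $b(v,w)=v^TBw$ for a unique $n\times m$ matrix $B$. The invariance condition $b(\rho(g)v,\sigma(g)w)=b(v,w)$ for all $v,w,g$ is then equivalent to the matrix identity
\[
\rho(g)^T B\,\sigma(g)=B\qquad\text{for all }g\in G,
\]
which, using the definition $\rho^*(g)=\rho(g^{-1})^T$, rearranges to
\[
B\,\sigma(g)=\rho^*(g)\,B\qquad\text{for all }g\in G.
\]
In other words, $B$ (regarded as a linear map $\F^m\to\F^n$) is a $G$-equivariant homomorphism from $\sigma$ to $\rho^*$.

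The remainder is a direct invocation of Schur's lemma. Since $\rho$ is irreducible, so is $\rho^*$: any invariant subspace of $\rho^*$ would annihilate, via the natural pairing, a proper invariant subspace of $\rho$, contradicting irreducibility. So both $\sigma$ and $\rho^*$ are irreducible, and hence $\Hom_G(\sigma,\rho^*)$ is zero unless $\sigma\cong\rho^*$, in which case it is a division algebra and in particular contains nonzero elements. Thus a nonzero invariant pairing $b$ exists precisely when $\sigma\cong\rho^*$, which is the statement.

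There is no real obstacle here; the only thing to be a little careful about is the direction of the conversion between the invariance condition on $b$ and the intertwining condition, and recording that irreducibility of $\rho$ transfers to $\rho^*$ so that Schur's lemma applies on both sides. This mirrors part \eqref{it:self_dual} of Lemma~\ref{lem:self_dual}, the only difference being that the source and target representations are now allowed to differ.
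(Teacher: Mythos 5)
Your proof is correct and is essentially the adaptation of Lemma~\ref{lem:self_dual} that the paper intends: encode the pairing as a matrix $B$, observe that invariance means $B$ intertwines $\sigma$ with $\rho^*$, and conclude by Schur's lemma (noting that $\rho^*$ inherits irreducibility from $\rho$). Nothing further is needed.
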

Let $\rho: G \rightarrow GL_n(\F)$ be a representation. Let $\End(\rho)$ denote the endomorphism algebra of $\rho$. That is,
\[
\End(\rho)=\{A\in M_n(\F) \mid \text{$\rho(g)A=\rho(g)A$ for all $g\in G$}\}.
\]
In general, the $\rho$-invariant pairings are all related via the endomorphism algebra of $\rho$
\begin{prop}\label{prop:endo_algebra}
Let $\rho: G \rightarrow GL_n(\F)$ be a self-dual representation and let $B$ be a matrix representing a non-degenerate $\rho$-invariant pairing. Then a matrix $B'$ represents a $\rho$-invariant pairing if and only if there exists $C\in \End(\rho)$ such that $B'=BC$.
\end{prop}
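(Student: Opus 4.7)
The plan is to unfold both invariance conditions into intertwining relations between $\rho$ and its dual, after which the claim reduces to a one-line manipulation of $C:=B^{-1}B'$.

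First I would dispose of the easy direction. Suppose $B'=BC$ with $C\in\End(\rho)$. Since $C\rho(g)=\rho(g)C$ for every $g\in G$, one computes
\[
\rho(g)^T B' \rho(g)=\rho(g)^T BC\rho(g)=\rho(g)^T B\rho(g)\,C=BC=B',
\]
so $B'$ is $\rho$-invariant.

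For the converse, the central observation is that the condition $\rho(g)^T M\rho(g)=M$ is equivalent to
\[
M\rho(g)=(\rho(g)^T)^{-1}M=\rho(g^{-1})^T M=\rho^*(g)M,
\]
so any invariant pairing matrix $M$ intertwines $\rho$ with $\rho^*$. Since $B$ is non-degenerate, hence invertible, define $C:=B^{-1}B'$; this is the only possible candidate, so I must verify $C\in\End(\rho)$. Applying the intertwining formula to both $B$ and $B'$ yields $B\rho(g)B^{-1}=\rho(g^{-1})^T$ and $B'\rho(g)=\rho(g^{-1})^T B'$, so
\[
\rho(g)C=\rho(g)B^{-1}B'=B^{-1}\bigl(B\rho(g)B^{-1}\bigr)B'=B^{-1}\rho(g^{-1})^T B'=B^{-1}B'\rho(g)=C\rho(g).
\]
Thus $C$ commutes with every $\rho(g)$ and lies in $\End(\rho)$, proving $B'=BC$ for $C\in\End(\rho)$.

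There is no real obstacle here beyond keeping track of transposes and the direction of the intertwining; the only input that actually matters is the non-degeneracy of $B$, which is used precisely once to invert $B$ and form $C$. Everything else is a formal manipulation of the two invariance equations, and (as remarked in Lemma~\ref{lem:self_dual}(\ref{it:self_dual})) it is really the statement that the space of invariant pairings is a free rank-one module over $\End(\rho)$ generated by any chosen non-degenerate one.
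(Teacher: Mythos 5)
Your proof is correct and follows essentially the same route as the paper: both directions hinge on setting $C=B^{-1}B'$ and on the computation $\rho(g)^TBC\rho(g)=\rho(g)^TB\rho(g)C=BC$. The only difference is that you explicitly carry out the verification that $B^{-1}B'$ commutes with every $\rho(g)$ (via the intertwining relation $M\rho(g)=\rho^*(g)M$), a step the paper leaves to the reader.
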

\begin{proof}
    By hypothesis, the matrix $B$ is invertible. Thus one can consider $C=B^{-1}B'$ and verify that it forms an element of $\End(\rho)$. Conversely given $C\in \End(\rho)$, we have for all $g$.
    \[
    \rho(g)^TBC\rho(g)=\rho(g)^TB\rho(g)C=BC.
    \]
    Thus $BC$ also defines a $\rho$-invariant pairing.
\end{proof}

For irreducible representations, we have the following trichotomy which extends the usual division of irreducible representations over $\C$ into complex, real and quaternionic representations (see \cite[Section~3.5]{FultonHarris}).
\begin{prop}\label{prop:general_trichotomy}
    Let $\rho:G \rightarrow GL_n(\F)$ be an irreducible representation. Precisely one of the following holds:
    \begin{enumerate}[(a)]
        \item ($\rho$ is complex) $\rho$ is not self-dual and admits no non-zero invariant bilinear pairing.
        \item ($\rho$ is real) $\rho$ is self-dual and admits at least one symmetric non-zero invariant bilinear pairing.
        \item ($\rho$ is quaternionic) $\rho$ is self-dual and every non-zero invariant bilinear pairing is skew-symmetric.
    \end{enumerate}
\end{prop}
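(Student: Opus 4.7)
The plan is to reduce the trichotomy to Lemma~\ref{lem:self_dual} combined with the observation that transposition is an involution on the space of $\rho$-invariant bilinear pairings. Let $V$ denote the $\F$-vector space of all $\rho$-invariant bilinear pairings on $\F^n$, equivalently the set of matrices $B\in M_n(\F)$ satisfying $\rho(g)^T B \rho(g)=B$ for all $g\in G$.

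I would first dispose of case (a) as the negation of self-duality. If $\rho$ is not self-dual, then Lemma~\ref{lem:self_dual}(a) rules out any non-degenerate invariant pairing, and since $\rho$ is irreducible, Lemma~\ref{lem:self_dual}(b) upgrades this to the statement that no non-zero invariant pairing exists at all. Conversely, any self-dual $\rho$ admits a non-degenerate (hence non-zero) invariant pairing by the same lemma, so ``no non-zero invariant pairing'' is exactly the condition defining case (a).

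Now suppose $\rho$ is self-dual, so that $V\neq 0$. Transposing the identity $\rho(g)^T B \rho(g)=B$ shows that $B^T\in V$ whenever $B\in V$, so transposition restricts to an involution on $V$. Since $\Char \F=0$, the averaging $B=\tfrac12(B+B^T)+\tfrac12(B-B^T)$ yields a direct sum decomposition $V=V^+\oplus V^-$ into symmetric and skew-symmetric invariant pairings. If $V^+\neq 0$ there is a non-zero symmetric pairing and we land in case (b); otherwise $V=V^-$, every non-zero invariant pairing is skew-symmetric, and we land in case (c). Mutual exclusivity of (b) and (c) is automatic in characteristic zero, since a non-zero pairing cannot simultaneously satisfy $B=B^T$ and $B=-B^T$.

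The argument is essentially formal once Lemma~\ref{lem:self_dual} is invoked; the only substantive step is the symmetric/skew decomposition of $V$, for which the characteristic-zero hypothesis is precisely what is needed. I do not anticipate a serious obstacle at this level of the statement: finer facts about $\End(\rho)$, such as Schur's lemma identifying it as a division algebra, are not required for the trichotomy itself, though they would enter any more refined analysis of the relative sizes of $V^+$ and $V^-$.
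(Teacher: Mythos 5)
Your proposal is correct and follows essentially the same route as the paper: both arguments reduce the self-dual/non-self-dual dichotomy to Lemma~\ref{lem:self_dual} and then use the symmetrization $b\mapsto\tfrac12(b+b^T)$ (your projection onto $V^+$) to show that a self-dual irreducible with some non-skew invariant pairing must admit a non-zero symmetric one. Your phrasing via the transposition involution and the decomposition $V=V^+\oplus V^-$ is just a slightly more structured presentation of the paper's contrapositive argument.
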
 
\begin{proof}
    By Lemma~\ref{lem:self_dual}, $\rho$ admits a non-zero invariant bilinear pairing if and only if $\rho$ is self-dual. Given a self-dual representation $\rho$, suppose that it admits a non-zero $G$-invariant pairing $b$ that is not skew-symmetric (i.e $\rho$ is not quaternionic). Then the symmetrized pairing defined by
    \[
    \overline{b}(v,w)=\frac{b(v,w)+b(w,v)}{2}
    \]
    for all $v,w\in V$ is a non-zero symmetric invariant bilinear pairing for $\rho$, implying that $\rho$ is real. 
\end{proof}

\subsection{Real representations}
The following lemma generalizes the fact that any symmetric bilinear form can be diagonalized over a field of characteristic 0. 
\begin{lemma}\label{lem:cross_terms_real}
    Let $\rho:G\rightarrow GL_n(\F)$ be an irreducible real representation for $G$. Then every quadratic form $q\in \calQ(\rho^{\oplus m}$) is equivalent to
    \[
    q\cong q_1 \oplus \dots \oplus q_m
    \]
    with $q_i\in \calQ(\rho)$ for $i=1, \dots, m$.
\end{lemma}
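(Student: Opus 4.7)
The plan is to induct on $m$. The base case $m=1$ is trivial with $q_1 = q$, and I may assume $q \ne 0$ throughout (otherwise all $q_i$ can be taken to be zero). For $m \geq 2$, it suffices to locate a $G$-invariant subspace $W \leq V := V_0^{\oplus m}$, with $V_0 := \F^n$, such that $W \cong \rho$ as a $G$-representation and $q|_W$ is non-degenerate. Given such a $W$, non-degeneracy of $q|_W$ forces $V = W \oplus W^\perp$ as vector spaces, where $W^\perp$ denotes the $q$-orthogonal complement; both summands are $G$-invariant, and by uniqueness of isotypic decomposition of semisimple $G$-modules one has $W^\perp \cong \rho^{\oplus(m-1)}$. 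Setting $q_1 := q|_W \in \calQ(\rho)$ and inductively decomposing $q|_{W^\perp}$ then yields the desired $q \cong q_1 \oplus \dots \oplus q_m$.

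To find $W$, I will express $q$ in block form $Q = (Q_{ij})_{i,j=1}^m$ with each $Q_{ij}$ an $n \times n$ matrix. The $\rho^{\oplus m}$-invariance of $q$ forces each block to represent a $\rho$-invariant bilinear pairing on $V_0$, so by Lemma~\ref{lem:self_dual}(b) each block is either zero or non-degenerate. If some diagonal block $Q_{ii} \ne 0$, then $W$ can be taken to be the $i$-th coordinate copy of $V_0$ inside $V$. Otherwise all $Q_{ii} = 0$, so that some off-diagonal block $Q_{ij}$ must be non-zero, and I will look for $W$ among the subrepresentations $W_\phi := \{e_i(v) + e_j(\phi v) : v \in V_0\}$ parameterized by $\phi \in D := \End(\rho)$, where $e_i, e_j$ denote the coordinate inclusions; because $\phi$ commutes with $\rho$, each $W_\phi$ is $G$-invariant and isomorphic to $\rho$ via projection to the $i$-th coordinate. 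A direct computation using $Q_{ii}=Q_{jj}=0$ and $Q_{ji} = Q_{ij}^T$ identifies the pullback of $q$ to $W_\phi$ with the quadratic form on $V_0$ of symmetric matrix $Q_{ij}\phi + \phi^T Q_{ij}^T$.

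The main obstacle is to find $\phi \in D$ making this matrix non-zero. Using realness of $\rho$, I fix a non-degenerate symmetric $\rho$-invariant pairing $B$ (guaranteed by Proposition~\ref{prop:general_trichotomy} together with Lemma~\ref{lem:self_dual}(b)), and Proposition~\ref{prop:endo_algebra} then lets me write $Q_{ij} = BC$ for some $C \in D$. Introducing the anti-involution $\tau(X) := B^{-1}X^T B$ on $D$ (well-defined and anti-multiplicative by direct matrix computation using that $B$ defines an invariant pairing), the pullback matrix rewrites as $B(C\phi + \tau(C\phi))$, so I need some $\phi \in D$ with $C\phi + \tau(C\phi) \ne 0$. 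If $\tau(C) \ne -C$, then $\phi = I$ works. Otherwise $\tau(C) = -C$ and I take $\phi = C$: anti-multiplicativity gives $\tau(C^2) = \tau(C)\tau(C) = (-C)(-C) = C^2$, so $C^2 + \tau(C^2) = 2C^2$, which is non-zero because Schur's lemma makes $D$ a division algebra and $C \ne 0$. This produces the desired $W_\phi$ and completes the inductive step.
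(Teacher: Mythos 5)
Your argument is correct and is in essence the paper's proof in coordinate-free language: the paper's two block congruences are exactly your passage to the graph subspace $W_\phi$ and the splitting $V = W \oplus W^\perp$, and both proofs run the same induction on $m$ using Lemma~\ref{lem:self_dual}(b) and Proposition~\ref{prop:endo_algebra}. The only substantive difference is the choice of endomorphism when all diagonal blocks vanish: the paper simply takes $\phi = Q_{ij}^{-1}S$ with $S$ a nonzero symmetric invariant pairing, so that $Q_{ij}\phi + \phi^{T}Q_{ij}^{T} = 2S \neq 0$, which shortcuts your $\tau$-involution case analysis.
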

\begin{proof}
    Let $\calB$ denote the set of matrices $M\in GL_n(\F)$  representing $\rho$-invariant pairings. That is the set of $M$ such that $\rho(g)^T M \rho(g)=M$ for all $g\in G$. By Lemma~\ref{lem:self_dual}\eqref{it:non_degene} every non-zero $M\in \calB$ is invertible.
    
    The quadratic forms in $\calQ(\rho^{\oplus m})$ are represented by block matrices of the form
    \begin{equation}\label{eq:Q_form1}
        Q=(M_{ij})_{1\leq i, j\leq m},
    \end{equation}
    where  $M_{ij}\in \calB$ and $M_{ij}=M_{ji}^T$ for all $i$ and $j$.

    Note that if $D=(C_{ij})_{1\leq i, j\leq m}$ is an invertible block matrix such that each $C_{ij}$ is contained in $\End(\rho)$, then the matrix $D^T Q D$ represents a quadratic form in $\calQ(\rho^{\oplus m})$. The aim is to diagonalize $Q$ using such basis changes.

    If $Q$ is the zero form, then the proposition is clearly true. Thus we assume that $Q$ is non-zero.
    
    Thus there is some $M_{ij}$ which is non-zero. If $M_{ii}=0$ for all $i$, then we have $M_{ij}\neq 0$ for some $i\neq j$. By permuting the blocks we can assume that $M=M_{12}\neq 0$.
    Since $\rho$ is assumed to be real, there is a non-zero symmetric matrix $S\in \calB$. Given Proposition~\ref{prop:endo_algebra}, there is also $C\in \End(\rho)$ such that $MC=S$. 
    Performing a basis change of the following type allows us to assume that $M_{11}=2S$ is non-zero:
    \begin{equation*}
    \begin{pmatrix}
        I & C^T \\
        0 & I 
    \end{pmatrix}
    \begin{pmatrix}
        0 & M \\
        M^T & 0 
    \end{pmatrix}
    \begin{pmatrix}
        I & 0 \\
        C & I 
    \end{pmatrix}
    =
    \begin{pmatrix}
        MC+C^TM^T &  M\\
         M^T & 0
    \end{pmatrix}=
    \begin{pmatrix}
        2S &  M\\
         M^T & 0
    \end{pmatrix}.
    \end{equation*}

Thus we may assume that $M_{ii}\neq 0$ for some $i$ and hence by permuting blocks we may assume that $M_{11}\neq 0$. By Proposition~\ref{prop:endo_algebra}, for each $j$ there is a matrix $C_j=M_{11}^{-1}M_{1j}\in \End(\rho)$ such that $M_{11}C_j=M_{1j}$. Consequently, taking $C=\begin{pmatrix}
     C_2 & \dots & C_m
 \end{pmatrix}$ we may perform the following basis change.
 \[
\begin{pmatrix}
    I & 0 \\
    -C^T & I
\end{pmatrix}
Q\begin{pmatrix}
    I & -C\\
    0 & I
\end{pmatrix}=
\begin{pmatrix}
    M_{11} & 0\\
    0 & Q'
\end{pmatrix},
 \]
 where $Q'$ is a matrix representing a quadratic form in $\calQ(\rho^{\oplus (m-1)})$. This allows us to establish the result by induction on $m$.
\end{proof}

\begin{proof}[Proof of Proposition~\ref{prop:rational_rep_decomp}]
Since every representation over $\Q$ is real, it follows from Lemma~\ref{lem:cross_terms} and Lemma~\ref{lem:cross_terms_real} that any form $q\in \calQ(\rho)$ is equivalent to a form
\[
q\cong q_1 \oplus \dots \oplus q_k,
\]
where $q_i\in\calQ(\rho_i)$ for each $i$. Clearly $q$ is non-degenerate if and only if each of the $q_i$ is non-degenerate. 
\end{proof}

\subsection{Non-real representations}
Now we wish to understand $\calQ(\rho)$ when $\rho$ decomposes into non-real representations.
We will use following well-known algebraic fact. Recall that an isotropic subspace for a quadratic form $q$ is a subspace $U$ such that $q(v)=0$ for all $v\in U$.
\begin{lemma}\label{lem:isotropic_subspace}
    Let $q$ be a non-degenerate quadratic form of rank $2n$ over a field of characteristic $0$. If $q$ has an $n$-dimensional isotropic subspace, then $q$ is equivalent to the form
    \[
    q\cong\langle 1,-1 \rangle^{\oplus n}.
    \]
\end{lemma}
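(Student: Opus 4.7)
The plan is to argue by induction on $n$, peeling off hyperbolic planes one at a time. For the base case $n=1$, $V$ has rank $2$ with a non-zero isotropic vector $v$. Non-degeneracy of the associated bilinear form $b$ produces some $w$ with $b(v,w) \neq 0$; after rescaling $w$ we can assume $b(v,w)=1$, and then replacing $w$ by $w - \tfrac{q(w)}{2} v$ (using that the characteristic is not $2$, which is implied by characteristic $0$) kills $q(w)$ while preserving $b(v,w)=1$. So $q$ is represented by $\bigl(\begin{smallmatrix} 0 & 1 \\ 1 & 0\end{smallmatrix}\bigr)$ in the basis $(v,w)$, and the further change of basis $(v + w/2,\; v - w/2)$ diagonalizes this to $\langle 1, -1\rangle$.

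For the inductive step, let $U$ be the $n$-dimensional isotropic subspace of $V$. Polarization forces $b$ to vanish on $U \times U$, so $U$ is totally isotropic. Fix a non-zero $v_1 \in U$, and by the base-case argument find $w_1 \in V$ with $b(v_1, w_1) = 1$ and $q(w_1) = 0$. The plane $P = \mathrm{span}(v_1, w_1)$ is a non-degenerate hyperbolic plane, equivalent to $\langle 1, -1\rangle$ by the base case, and non-degeneracy yields the orthogonal decomposition $V = P \perp P^\perp$ with $P^\perp$ non-degenerate of rank $2(n-1)$.

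The key step is to show that $U \cap P^\perp$ is an $(n-1)$-dimensional isotropic subspace of $P^\perp$, so that the inductive hypothesis applies. Since $U$ is totally isotropic, $b(v_1, v) = 0$ automatically for all $v \in U$, hence $U \cap P^\perp = \{v \in U : b(w_1, v) = 0\}$ is the kernel of the linear functional $b(w_1, \cdot)\colon U \to \F$. This functional sends $v_1$ to $1$, so it is non-zero and its kernel has dimension $n-1$; isotropy is inherited from $U$. Applying the inductive hypothesis to $P^\perp$ and combining with the description of $P$ yields $q \cong \langle 1, -1\rangle^{\oplus n}$.

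The only real content is the dimension count for $U \cap P^\perp$ together with the characteristic-not-$2$ manipulations needed to produce a paired vector $w_1$ with $q(w_1)=0$; everything else is routine linear algebra. I do not anticipate a serious obstacle.
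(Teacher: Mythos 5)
Your proof is correct, but it follows a different route from the paper. You run the classical Witt-style induction: split off one hyperbolic plane $P=\mathrm{span}(v_1,w_1)$ at a time (using characteristic $\neq 2$ to adjust $w_1$ so that $q(w_1)=0$), pass to $P^\perp$, and check the one genuinely non-routine point, namely that $U\cap P^\perp$ is an $(n-1)$-dimensional isotropic subspace of $P^\perp$ because it is the kernel of the nonzero functional $b(w_1,\cdot)$ on the totally isotropic $U$; that dimension count is right, and the rest (polarization to get total isotropy, non-degeneracy of $P$ and hence of $P^\perp$, the base-case diagonalization of $\bigl(\begin{smallmatrix}0&1\\1&0\end{smallmatrix}\bigr)$ to $\langle 1,-1\rangle$) is fine. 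The paper instead argues globally and non-inductively: extending a basis of the isotropic subspace to a basis of the whole space, the Gram matrix has the block form $\bigl(\begin{smallmatrix}0&A\\A^T&B\end{smallmatrix}\bigr)$ with $A$ invertible by non-degeneracy, and two explicit block congruences first kill $B$ and then diagonalize $\bigl(\begin{smallmatrix}0&A\\A^T&0\end{smallmatrix}\bigr)$ to $\bigl(\begin{smallmatrix}I&0\\0&-I\end{smallmatrix}\bigr)$. The paper's computation is shorter and completely explicit (useful since the same block manipulations recur in Lemmas~\ref{lem:cross_terms_real} and \ref{lem:cross_terms_quat}), while your induction is more conceptual and makes clear that what is really being proved is the standard fact that a maximal-dimension totally isotropic subspace forces the form to be a sum of hyperbolic planes; both arguments use only characteristic $\neq 2$.
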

\begin{proof}
    By extending a basis for the isotropic subspace to a basis for the entire space, we see that $q$ can be represented by a matrix of the form
    \[
    M=\begin{pmatrix}
        0 & A\\ A^T & B,
    \end{pmatrix} 
    \]
    where the blocks are all $n\times n$ matrices and $B$ is symmetric. The non-degeneracy of $q$ implies that $A$ is invertible. Using $M$, we can explicitly diagonalize $q$. First we have that
    \begin{equation*}
    \begin{pmatrix}
        I &-\frac{1}{2}(BA^{-1})^T\\ 0 & I
    \end{pmatrix}^T
    \begin{pmatrix}
        0 & A\\ A^T & B
    \end{pmatrix}
    \begin{pmatrix}
        I &-\frac{1}{2}(BA^{-1})^T\\ 0 & I
    \end{pmatrix}=
    \begin{pmatrix}
        0 & A\\ A^T & 0
    \end{pmatrix}.
\end{equation*}
    This in turn can be diagonalized
    \begin{equation*}
    \begin{pmatrix}
       \frac{1}{2} I &-\frac{1}{2}I\\ A^{-1} & A^{-1}
    \end{pmatrix}^T
    \begin{pmatrix}
        0 & A\\ A^T & 0
    \end{pmatrix}
    \begin{pmatrix}
        \frac{1}{2} I &-\frac{1}{2}I\\ A^{-1} & A^{-1}
    \end{pmatrix}=
    \begin{pmatrix}
        I & 0\\ 0 & -I
    \end{pmatrix},
\end{equation*}
putting $q$ in the desired form.
\end{proof}
In the presence of only complex representations the analysis is straight-forward.
\begin{lemma}\label{lem:cross_terms_complex}
    Let $\rho:G\rightarrow GL_n(\F)$ be an irreducible complex (i.e non-self dual) representation for $G$. Then any non-degenerate quadratic form $q\in \calQ(\rho^{\oplus m}\oplus (\rho^*)^{\oplus m})$
 is equivalent to
    \[
    q\cong \langle 1, -1 \rangle^{\oplus nm}.
    \]
\end{lemma}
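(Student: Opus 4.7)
The approach is to exhibit a maximal isotropic subspace for $q$ coming from one of the two natural direct summands, and then invoke Lemma~\ref{lem:isotropic_subspace} to put $q$ in hyperbolic form. The key mechanism is that for a complex (non-self-dual) irreducible representation, the only non-zero invariant bilinear pairings among copies of $\rho$ and $\rho^*$ are the ones that pair a copy of $\rho$ with a copy of $\rho^*$.

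First, I would observe that $\rho^*$ is itself also complex: if $\rho^*$ were self-dual, then $\rho \cong (\rho^*)^* \cong \rho^*$, contradicting that $\rho$ is non-self-dual. Moreover, $\rho^* \not\cong \rho$. Therefore, by Lemma~\ref{lem:cross_terms}, any $G$-invariant bilinear pairing between two copies of $\rho$ must vanish, and likewise any $G$-invariant pairing between two copies of $\rho^*$ must vanish; only pairings between a copy of $\rho$ and a copy of $\rho^*$ (and vice versa) can be non-zero.

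Now let $V = \F^{2nm}$ split as $V = V_1 \oplus V_2$, where $V_1$ carries $\rho^{\oplus m}$ and $V_2$ carries $(\rho^*)^{\oplus m}$. The symmetric bilinear form $b$ associated to $q$ is $G$-invariant. Restricting $b$ to $V_1 \times V_1$ and then further restricting to any pair of irreducible summands of $\rho^{\oplus m}$ produces a $G$-invariant bilinear pairing between two copies of $\rho$, which must vanish by the previous paragraph. Hence $b|_{V_1 \times V_1} \equiv 0$, so $V_1$ is an isotropic subspace for $q$, of dimension $nm$, which is exactly half the rank $2nm$ of $q$.

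Since $q$ is non-degenerate of even rank $2nm$ and admits a half-dimensional isotropic subspace, Lemma~\ref{lem:isotropic_subspace} applies directly and gives $q \cong \langle 1, -1 \rangle^{\oplus nm}$, as desired. There is no real obstacle here: the only minor technical point is confirming that $\rho^*$ is also non-self-dual (handled above), after which the isotropy of $V_1$ is immediate from Lemma~\ref{lem:cross_terms} and the conclusion follows from Lemma~\ref{lem:isotropic_subspace}.
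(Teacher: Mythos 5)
Your proof is correct and follows the same route as the paper: you use Lemma~\ref{lem:cross_terms} to show the summand carrying $\rho^{\oplus m}$ is a half-dimensional isotropic subspace and then conclude via Lemma~\ref{lem:isotropic_subspace}. The extra checks you include (that $\rho^*$ is also non-self-dual, and the block-by-block vanishing of the restricted pairing) simply spell out details the paper leaves implicit.
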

\begin{proof}
    Since $\rho$ is not self-dual, Lemma~\ref{lem:cross_terms} implies $q$ must be zero on the subspace corresponding to $\rho^{\oplus m}$. Thus we have a half-dimensional isotropic subspace and so Lemma~\ref{lem:isotropic_subspace} applies.
\end{proof}

The analysis for quaternionic representations is more delicate, but mimics the proof that every skew-symmetric form admits a symplectic basis.
\begin{lemma}\label{lem:cross_terms_quat}
    Let $\rho:G\rightarrow GL_n(\F)$ be an irreducible quaternionic representation for $G$. Every non-degenerate quadratic form $q\in\calQ(\rho^{\oplus m})$ is equivalent to
    \[
    q\cong \langle 1, -1 \rangle^{\oplus \frac12 n m}.
    \]
    Moreover such a form exists if and only if $m$ is even.
\end{lemma}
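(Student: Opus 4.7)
The plan is to mimic the construction of a symplectic basis, pairing copies of $\rho$ into hyperbolic planes and inducting on $m$. Two observations drive everything. First, since $\rho$ is quaternionic, every $\rho$-invariant bilinear pairing is skew-symmetric; so if we represent $q$ by a block matrix $Q=(M_{ij})_{1\leq i,j\leq m}$ (with $M_{ji}=M_{ij}^T$), each $M_{ij}$ is skew, and in particular $M_{ii}=0$. Consequently each copy $V_i\subset V=V_1\oplus\cdots\oplus V_m$ of $\rho$ is an isotropic subspace of $q$. Second, by Lemma~\ref{lem:self_dual}(ii), every non-zero $\rho$-invariant pairing is non-degenerate, so each non-zero block $M_{ij}$ is invertible.

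Existence when $m$ is even is immediate: fix any non-zero invariant pairing $S$ (which is skew and invertible) and form the block-diagonal sum of $m/2$ copies of $\bigl(\begin{smallmatrix}0 & S\\ -S & 0\end{smallmatrix}\bigr)$. This is non-degenerate, and Lemma~\ref{lem:isotropic_subspace} applied to the isotropic subspace $V_1$ of each $2n$-dimensional block shows it is equivalent to $\langle 1,-1\rangle^{\oplus nm/2}$. Non-existence for odd $m$ will drop out of the induction below, whose base case $m=1$ says that $\calQ(\rho)$ contains only the zero form (since every invariant pairing is skew).

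For the structural claim I induct on $m\geq 2$. Since $q$ is non-degenerate and $M_{11}=0$, some $M_{1j}$ is non-zero; after permuting copies I may take $j=2$, and then $M_{12}$ is invertible. The restriction of $q$ to $V_1\oplus V_2$ is therefore non-degenerate with $n$-dimensional isotropic subspace $V_1$, so by Lemma~\ref{lem:isotropic_subspace} it is equivalent to $\langle 1,-1\rangle^{\oplus n}$. Next I replace each $V_j$ with $j\geq 3$ by its image under a $G$-equivariant shear $v\mapsto v+A_j v+B_j v$, where $A_j\colon V_j\to V_1$ and $B_j\colon V_j\to V_2$ are chosen so that the new subspace is orthogonal to both $V_1$ and $V_2$. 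The resulting decomposition $V=(V_1\oplus V_2)\oplus V'$ is orthogonal, and $V'$ is a $\rho^{\oplus(m-2)}$-submodule carrying a non-degenerate form in $\calQ(\rho^{\oplus(m-2)})$. The inductive hypothesis then yields the classification, and since the induction splits off exactly two copies at a time, parity of $m$ is preserved and non-existence for odd $m$ follows.

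The delicate point is finding the shears $A_j, B_j$ as morphisms in $\Hom_G(V_j,V_1)$ and $\Hom_G(V_j,V_2)$. This is where Proposition~\ref{prop:endo_algebra} enters: after fixing an identification of each $V_k$ with $\rho$, it writes every invariant pairing $M_{k\ell}$ as $SC_{k\ell}$ with $C_{k\ell}\in\End(\rho)$; invertibility of $M_{12}$ yields invertibility of $C_{12}$, and the orthogonality conditions reduce to linear equations of the form $B_j=-C_{12}^{-1}C_{1j}$ and an analogous formula for $A_j$, both solvable in $\End(\rho)$.
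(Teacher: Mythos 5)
Your argument is correct and follows essentially the same route as the paper's proof: the block-matrix description of $\calQ(\rho^{\oplus m})$ with skew blocks, invertibility of non-zero blocks via Lemma~\ref{lem:self_dual}, shears with blocks in $\End(\rho)$ supplied by Proposition~\ref{prop:endo_algebra} to split off a hyperbolic $V_1\oplus V_2$, Lemma~\ref{lem:isotropic_subspace} for that block, and induction dropping $m$ by two with the impossible case $m=1$ giving non-existence for odd $m$. Your explicit block-diagonal construction of a non-degenerate form when $m$ is even is a welcome addition that the paper leaves implicit.
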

\begin{proof}
Let $\calB$ denote the set of matrices $M\in GL_n(\F)$  representing $\rho$-invariant pairings. That is the set of $M$ such that $\rho(g)^T M \rho(g)=M$ for all $g\in G$. By Lemma~\ref{lem:self_dual}\eqref{it:non_degene} every non-zero $M\in \calB$ is invertible.
    
The quadratic forms in $\calQ(\rho^{\oplus m})$ are represented by block matrices of the form
\begin{equation}\label{eq:Q_form2}
    Q=(M_{ij})_{1\leq i, j\leq m},
\end{equation}
    where  $M_{ij}\in \calB$ and $M_{ij}=M_{ji}^T$ for all $i$ and $j$. Since $\rho$ is quaternionic, the $M_{ij}$ satisfy $M_{ij}^T=-M_{ij}$.

   Since $\rho$ is quaternionic, there are no non-zero symmetric matrices in $\calB$. Thus there can be no non-degenerate symmetric forms when $n=1$. When $n=2$, the form necessarily has a half-dimensional isotropic subspace, allowing us to apply Lemma~\ref{lem:isotropic_subspace}.

   Since $M_{11}=0$ and $Q$ is non-degenerate, we have $M_{1j}\neq 0$ for at least one $j>1$. By permuting blocks we may assume that $M_{12}\neq 0$.

   By Proposition~\ref{prop:endo_algebra}, for each $j>2$ there is a matrix $C_j=M_{12}^{-1}M_{1j}\in \End(\rho)$ and a matrix $D_j=M_{12}^{-1}M_{2j}\in \End(\rho)$ such that $M_{12}D_j=M_{2j}$. Set $C$ and $D$ to be the block matrices
   \[
   \text{
   $C=\begin{pmatrix}
       C_3 & \dots & C_m
   \end{pmatrix}$ and
   $D=\begin{pmatrix}
       D_3 & \dots & D_m
   \end{pmatrix}$,}
   \]
respectively. We can perform the following basis change 
\begin{equation}\label{eq:0basischange}
    \begin{pmatrix}
        I & 0& 0 \\
        0 & I & 0 \\
        D^T & -C^T & I
    \end{pmatrix}
    Q
    \begin{pmatrix}
        I & 0 & D \\
        0 & I & -C\\
        0 & 0 & I
    \end{pmatrix}
    =
    \begin{pmatrix}
        0 &  M_{12} & 0 \\
         M_{12} & 0 & 0 \\
         0 & 0 & Q'
    \end{pmatrix},
    \end{equation}
    where $Q'$ represents a form in $\calQ(\rho^{m-2})$. This allows us to prove the lemma by induction on $m$.
\end{proof}

When applied to rational representations we obtain the following key result.

\begin{prop}\label{prop:rational_complex_only}
Let $G$ be a finite group of exponent $e$ and let $\rho: G\rightarrow O(q;\Q)$ be a representation of $G$ for $q$ a non-degenerate rational quadratic form of rank $n$. 
If the decomposition of $\rho$ into irreducible representations over $\C$ contains only non-real representations, then for any $p\equiv 1 \bmod e$ we have that $(-1)^\frac{n}{2}d(q)$ is a square in $\Q_p$ and $\epsilon_p(q)=1$.
\end{prop}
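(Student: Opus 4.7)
The strategy is to show that after extending scalars to $\Q_p$, the form $q\otimes \Q_p$ becomes hyperbolic of rank $n$, i.e., equivalent to $\langle 1,-1\rangle^{\oplus n/2}$; both conclusions then follow immediately from the invariants of this standard form.

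The key input is that $\Q_p$ is a splitting field for $G$ whenever $p\equiv 1\bmod e$. Indeed, since $e\mid p-1$, the group of $e$-th roots of unity embeds in $\F_p^\times$ and lifts by Hensel's lemma to $\Z_p^\times\subset \Q_p$, so $\Q(\zeta_e)\subseteq \Q_p$; Brauer's splitting theorem then gives that $\Q_p$ splits $G$. Consequently, the decomposition of $\rho \otimes \Q_p$ into $\Q_p$-irreducibles agrees with the $\C$-irreducible decomposition of $\rho$. The hypothesis then says that every $\Q_p$-irreducible component is non-real in the sense of Proposition~\ref{prop:general_trichotomy}, i.e., complex or quaternionic. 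The real/complex/quaternionic trichotomy is preserved under this extension of scalars because, $\Q_p$ being a splitting field, the space of invariant bilinear pairings on each absolutely irreducible component is one-dimensional over $\Q_p$ and agrees with the analogous space over $\C$ after base change.

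With this in hand, I would apply Lemma~\ref{lem:cross_terms} to $q\otimes \Q_p$ to reduce to an orthogonal sum of forms on isotypic pieces, pairing each complex $\tau$ with $\tau^*$; non-degeneracy forces equal multiplicities on such dual pairs. Then Lemma~\ref{lem:cross_terms_complex} shows each complex block is hyperbolic, and Lemma~\ref{lem:cross_terms_quat} does the same for quaternionic blocks, so $q\otimes \Q_p\cong \langle 1,-1\rangle^{\oplus n/2}$ as desired.

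Finally, the invariants are read off directly. Since the hypothesis on $\rho$ is vacuous when $e=1$, we may assume $e\geq 2$, so $p$ is odd and $(-1,-1)_p=1$. The form $\langle 1,-1\rangle^{\oplus n/2}$ has discriminant $(-1)^{n/2}$, giving $(-1)^{n/2}d(q)\equiv 1$ in $\Q_p^\times/(\Q_p^\times)^2$. A short induction using $\epsilon_p(f\oplus g)=\epsilon_p(f)\epsilon_p(g)(d(f),d(g))_p$, starting from $\epsilon_p(\langle 1,-1\rangle)=1$ and $d(\langle 1,-1\rangle)=-1$ and repeatedly using $(-1,-1)_p=1$, yields $\epsilon_p(\langle 1,-1\rangle^{\oplus n/2})=1$. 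The only genuinely subtle point in the plan is the preservation of the Schur-type trichotomy under extension of scalars from $\C$ to $\Q_p$; every other step is a direct application of the lemmas already established.
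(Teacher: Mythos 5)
Your proposal is correct and is essentially the paper's argument: Brauer's theorem supplies a cyclotomic splitting field, Lemmas~\ref{lem:cross_terms}, \ref{lem:cross_terms_complex} and \ref{lem:cross_terms_quat} show the form becomes $\langle 1,-1\rangle^{\oplus n/2}$ over that field, and the discriminant and Hasse--Witt invariants are then read off over $\Q_p$. The only difference is one of ordering: the paper decomposes $\rho$ over $\Q[\zeta_e]\subset\C$, where non-reality of the constituents follows immediately from the hypothesis since a symmetric invariant pairing over the subfield would give one over $\C$, and then uses the inclusion $\Q[\zeta_e]\subseteq\Q_p$ (coming from the $(p-1)$-st roots of unity in $\Q_p$) to transport the equivalence $q\cong\langle 1,-1\rangle^{\oplus n/2}$ to $\Q_p$, thereby sidestepping the trichotomy-transfer point you flag as the subtle step of your version.
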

\begin{proof}
    It is a theorem of Brauer that the cyclotomic field $\Q[\zeta_e]$ is a splitting field for $G$ \cite[Theorem~24]{Serre_representations}. This means that any representation of $G$ that is irreducible over $\Q[\zeta_e]$ remains irreducible over any field extension of $\Q[\zeta_e]$. In particular, if we decompose $\rho$ into irreducible representations over $\Q[\zeta_e]$, then these representations remain irreducible over $\C$ and so are all non-real by hypothesis. By Lemma~\ref{lem:cross_terms}, Lemma~\ref{lem:cross_terms_complex} and Lemma~\ref{lem:cross_terms_quat}, this means that the form $q$ is equivalent to $\langle 1,-1\rangle^{\oplus \frac{n}{2}}$ over $\Q[\zeta_e]$. For $p>2$, the $p$-adic field $\Q_p$ contains $(p-1)$-roots of unity \cite[Ch.~II, Prop.~7]{Serre_arithmetic}. Therefore, for $p\equiv 1 \bmod e$ the $p$-adic field $\Q_p$ contains a copy of $\Q[\zeta_e]$. Thus $q$ is equivalent $\langle 1,-1\rangle^{\oplus \frac{n}{2}}$ over such a $\Q_p$, allowing us to compute the Hasse-Witt invariants and discriminant of $q$ in this field.
\end{proof}

\section{Applications}
\label{s:applications}
We wish to determine the commensurability classes of arithmetic hyperbolic $(n+1)$-manifolds containing a given flat $n$-manifold $B$ as a cusp cross-section. In light of Theorem~\ref{thm:main_technical}, this can be achieved by choosing a holonomy representation $\rho: H \rightarrow GL_n(\Q)$ for $B$ and classifying the quadratic forms $f\oplus \langle 1,-1\rangle$, where $f\in \calQ(\rho)$ is positive-definite. Given Proposition~\ref{prop:rational_rep_decomp}, we can simplify our calculations by decomposing $\rho$ into irreducible components over $\Q$ and studying each of these representations individually.

For a quadratic form $q$ of signature $(n+1,1)$ with $n\geq 3$, Theorem~\ref{thm:quadratic_classification} shows that $d(q)<0$ and the set of primes such that $\epsilon_p(q)=-1$ is finite and of even cardinality. Conversely, given any $d\in \Q_{<0}$ and a finite set of primes $S$ of even size, there is a (unique up to equivalence) quadratic form $q$ of signature $(n+1,1)$, discriminant $d(q)=d$ and $\epsilon_p(q)=-1\Leftrightarrow p\in S$.

\subsection{Constructions}
Firstly, we show that certain flat manifolds appear as cusp cross-sections in all commensurability classes of arithmetic hyperbolic manifolds of the appropriate dimension.
\begin{theorem}
    \label{thm:three_odd_irreps}
    Let $B$ be a flat $n$-manifold such that the holonomy representation $\rho: H\rightarrow GL_n(\Q)$ contains at least three odd-dimensional rational irreducible sub-representations. Then $B$ arises as a cusp cross-section in every commensurability class of cusped arithmetic hyperbolic $(n+1)$-manifolds.
\end{theorem}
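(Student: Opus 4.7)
The plan is to reduce the statement directly to Proposition~\ref{prop:all_forms} via Theorem~\ref{thm:main_technical}. Let $q$ be an arbitrary rational quadratic form of signature $(n+1,1)$. By Theorem~\ref{thm:main_technical} it suffices to produce a positive definite rational quadratic form $f$ of rank $n$ such that some holonomy representation of $B$ takes values in $O(f;\Q)$ and such that $f\oplus\langle 1,-1\rangle$ is rationally equivalent to $q$. Equivalently, fixing the holonomy representation $\rho$ given in the hypothesis, we must find a positive definite $f\in\calQ(\rho)$ of rank $n$ with $f\oplus\langle 1,-1\rangle\cong q$.

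Write the hypothesized decomposition as $\rho\cong \rho_1\oplus \rho_2\oplus \rho_3\oplus \sigma$, where $\rho_1,\rho_2,\rho_3$ are the three given odd-dimensional rational sub-representations and $\sigma$ is the complementary summand. Using the averaging construction described just before Proposition~\ref{prop:rational_rep_decomp}, I would pick a positive definite form $f_i^{(0)}\in \calQ(\rho_i)$ for each $i=1,2,3$ and a positive definite form $g\in\calQ(\sigma)$; each $\rho_i$ has odd rank $r_i$ by hypothesis, and $g\oplus\langle 1,-1\rangle$ is a rational quadratic form of signature $(\mathrm{rank}(\sigma)+1,1)$.

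Now I apply Proposition~\ref{prop:all_forms} to the three positive definite odd-rank forms $f_1^{(0)},f_2^{(0)},f_3^{(0)}$, with ``$g$'' in the proposition taken to be $g\oplus\langle 1,-1\rangle$ and ``$q$'' taken to be the given $q$. The signature bookkeeping works: $r_1+r_2+r_3+\mathrm{rank}(\sigma)+2=n+2$, and the signatures on both sides match since the $f_i^{(0)}$ are positive definite. The proposition thus yields positive rationals $a_1,a_2,a_3\in\Q_{>0}$ such that
\[
a_1 f_1^{(0)}\oplus a_2 f_2^{(0)}\oplus a_3 f_3^{(0)}\oplus g\oplus \langle 1,-1\rangle\;\cong\;q.
\]
Setting $f:=a_1 f_1^{(0)}\oplus a_2 f_2^{(0)}\oplus a_3 f_3^{(0)}\oplus g$ gives a positive definite form of rank $n$ with $f\oplus\langle 1,-1\rangle\cong q$. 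Since $O(af;\Q)=O(f;\Q)$ for $a\in\Q_{>0}$, each summand $a_i f_i^{(0)}$ lies in $\calQ(\rho_i)$, and taking direct sums shows $f\in\calQ(\rho)$. Therefore $\rho$ is a holonomy representation of $B$ taking values in $O(f;\Q)$, and Theorem~\ref{thm:main_technical} completes the proof.

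There is essentially no main obstacle once Proposition~\ref{prop:all_forms} is in hand: that is exactly the tool engineered to realize an arbitrary target form using three odd-rank positive definite pieces that can be rescaled by positive rationals. The role of the hypothesis ``at least three odd-dimensional rational sub-representations'' is precisely to make the rank and positivity conditions of Proposition~\ref{prop:all_forms} applicable, and the role of Theorem~\ref{thm:main_technical} is to translate the existence of the realizing form into the cusp cross-section conclusion.
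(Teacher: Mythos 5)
Your proposal is correct and follows essentially the same route as the paper: decompose $\rho$ into the three odd-dimensional summands plus a complement, choose positive definite invariant forms by averaging, invoke Proposition~\ref{prop:all_forms} with the leftover summand together with $\langle 1,-1\rangle$ playing the role of $g$, and conclude via Theorem~\ref{thm:main_technical}. The only difference is that you spell out the rescaling check $O(a_if_i^{(0)};\Q)=O(f_i^{(0)};\Q)$, which the paper leaves implicit.
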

\begin{proof}
    Since $\rho$ has at least three odd-dimensional rational sub-representations, we take $\rho$ in the form
    \[
    \rho= \rho_1 \oplus \rho_2 \oplus \rho_3 \oplus \rho_4,
    \]
    where $\rho_1, \rho_2, \rho_3, \rho_4$ are rational representations with $\rho_1, \rho_2, \rho_3$ odd-dimensional. For each the $\rho_i$ there is a positive-definite non-degenerate quadratic form $g_i$ such that $\rho_i: H \rightarrow O(g_i;\Q)$. Let $q$ be an arbitrary form of signature $(n+1,1)$. By Proposition~\ref{prop:all_forms} there exist $a_1,a_2,a_3\in \Q_{>0}$ such that
    \[
    q \cong a_1 g_1 \oplus a_2 g_2 \oplus a_3 g_3 \oplus g_4 \oplus \langle 1,-1\rangle.
    \]
    However the holonomy representation $\rho$ has image in 
    \[
    O(f';\Q)=O(a_1 g_1 \oplus a_2 g_2 \oplus a_3 g_3 \oplus g_4;\Q).
    \] 
    So Theorem~\ref{thm:main_technical} shows that $B$ appears as a cusp cross-section in the commensurability class defined by $q$.
\end{proof}
 In particular, Theorem~\ref{thm:three_odd_irreps} implies that if the holonomy representation of a flat $n$-manifold $B$ contains least three one-dimensional components over $\Q$, then $B$ appears as a cusp cross section in every commensurability class of cusped arithmetic hyperbolic $(n+1)$-manifolds. This applies to flat manifolds with $b_1(B)\geq 3$ and to those with holonomy groups of the form $H\cong (C_2)^k$.
\begin{cor}\label{cor:b_1geq3}
    Let $B$ be a flat $n$-manifold with $b_1(B)\geq 3$. Then $B$ arises as a cusp cross-section in every commensurability class of cusped arithmetic hyperbolic $(n+1)$-manifolds.
\end{cor}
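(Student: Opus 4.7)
The plan is to reduce this corollary directly to Theorem~\ref{thm:three_odd_irreps}. The key observation is that the first Betti number $b_1(B)$ equals the multiplicity with which the trivial representation appears in the rational holonomy representation $\rho: H \to GL_n(\Q)$. Once that identification is made, $b_1(B)\geq 3$ immediately produces three one-dimensional (hence odd-dimensional) $\Q$-subrepresentations of $\rho$, and Theorem~\ref{thm:three_odd_irreps} finishes the argument.

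To justify the identification, I would use the short exact sequence
\[
1\longrightarrow T \longrightarrow \Gamma \overset{p}{\longrightarrow} H \longrightarrow 1
\]
from Section~\ref{s:flat_mflds}, together with the fact that $B$ is the quotient of the torus $T^n=\R^n/T$ by the induced free-on-the-torus action of $H$ (via the holonomy representation on $T\cong \Z^n$). Either by rational covering space theory (since $H$ is finite), or by the Hochschild--Serre spectral sequence with $\Q$-coefficients, one obtains
\[
H_1(B;\Q) \cong (T\otimes_\Z \Q)_H.
\]
Because $H$ is finite and we are working over a field of characteristic zero, coinvariants agree with invariants, so
\[
H_1(B;\Q) \cong (\Q^n)^H = \Hom_H(\mathbf{1},\rho),
\]
whose dimension is exactly the multiplicity of the trivial representation in $\rho$. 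Hence $b_1(B)\geq 3$ forces $\rho\cong \mathbf{1}^{\oplus 3}\oplus \rho'$ for some rational subrepresentation $\rho'$.

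Now apply Theorem~\ref{thm:three_odd_irreps}: the three copies of $\mathbf{1}$ are three odd-dimensional $\Q$-subrepresentations, so $B$ appears as a cusp cross-section in every commensurability class of arithmetic hyperbolic $(n+1)$-manifolds of simplest type. Since the hypotheses of Theorem~\ref{thm:three_odd_irreps} are satisfied in all dimensions (including $n=6$, as already addressed in the discussion following that theorem), no separate case analysis is required.

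The only substantive point is the identification $b_1(B) = \dim (\Q^n)^H$; this is standard but should be stated explicitly since the remainder of the proof is purely formal. I do not anticipate any real obstacle, as both ingredients (the Betti-number interpretation and Theorem~\ref{thm:three_odd_irreps}) are already in hand.
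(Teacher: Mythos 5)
Your proposal is correct and follows essentially the same route as the paper: identify $b_1(B)$ with the multiplicity of the trivial representation in the holonomy representation and then invoke Theorem~\ref{thm:three_odd_irreps}. The only difference is that the paper cites this Betti-number identity from the literature (Hiller--Sah), whereas you sketch its standard proof via the finite covering $T^n\to B$ and the transfer; both are fine.
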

\begin{proof}
    For a flat manifold $B$ the Betti number $b_1(B)$ is equal to the number of trivial representations when the holonomy representation is decomposed into irreducible representations \cite[Corollary~1.3]{Hiller1986flat}. Since trivial representations are rational and odd-dimensional the result follows from Theorem~\ref{thm:three_odd_irreps}.
\end{proof}

\begin{cor}\label{cor:Z2holonomy}
   Let $B$ be a flat $n$-manifold of dimension $n\geq 3$ and holonomy group $H\cong (C_2)^k$ for some $k\geq 0$.  Then $B$ appears as a cusp cross section in every commensurability class of cusped arithmetic hyperbolic $(n+1)$-manifolds.
\end{cor}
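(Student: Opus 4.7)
The plan is to reduce the statement directly to Theorem~\ref{thm:three_odd_irreps} by showing that the holonomy representation $\rho: H \to GL_n(\Q)$ decomposes over $\Q$ into $n$ one-dimensional pieces, which is at least three one-dimensional (hence odd-dimensional) rational sub-representations whenever $n\geq 3$.

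The first step is to verify that every irreducible rational representation of $H\cong (C_2)^k$ is one-dimensional. Since $H$ is abelian, every irreducible complex representation of $H$ is one-dimensional. The complex characters of $(C_2)^k$ take values in $\{\pm 1\}\subset \Q$, so each irreducible complex representation is realised by a homomorphism $H\to \{\pm 1\}\leq GL_1(\Q)$. Because these one-dimensional rational characters remain irreducible and pairwise non-isomorphic over $\C$, they exhaust the irreducible $\Q$-representations of $H$. Consequently any rational representation of $H$ splits as a direct sum of one-dimensional $\Q$-subrepresentations.

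In the second step, I apply this to the holonomy representation $\rho: H \to GL_n(\Q)$. By the preceding paragraph, $\rho$ decomposes as
\[
\rho \cong \chi_1 \oplus \cdots \oplus \chi_n,
\]
where each $\chi_i: H \to GL_1(\Q)$ is a sign character. Since $n\geq 3$, the decomposition contains at least three odd-dimensional rational sub-representations, and Theorem~\ref{thm:three_odd_irreps} applies to conclude that $B$ arises as a cusp cross-section in every commensurability class of arithmetic hyperbolic $(n+1)$-manifolds of simplest type.

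There is no real obstacle here; the only point that might require a moment's thought is the rationality of all irreducible complex representations of $(C_2)^k$, which is immediate from the fact that these characters are $\{\pm 1\}$-valued (so the Schur indices are trivially $1$). Once that is noted, the corollary is a direct consequence of Theorem~\ref{thm:three_odd_irreps}.
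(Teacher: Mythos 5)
Your proof is correct and follows exactly the paper's argument: every irreducible representation of $(C_2)^k$ is one-dimensional and rational, so the holonomy representation splits into $n\geq 3$ one-dimensional rational sub-representations and Theorem~\ref{thm:three_odd_irreps} applies. The paper states this in one line; your write-up simply fills in the routine justification of rationality.
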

\begin{proof}
    Every irreducible representation of $(C_2)^k$ is rational and one-dimensional, so again Theorem~\ref{thm:three_odd_irreps} applies.
\end{proof}
\subsection{Manifolds with odd holonomy}
On the other hand it is possible to use Theorem~\ref{thm:main_technical} to obstruct certain flat manifolds from occurring as a cusp cross-section in some commensurability classes. Notably we are able to obtain obstructions in all dimensions.

\begin{theorem}\label{thm:odd_obstruction}
    Let $B$ be a compact flat $n$-manifold with holonomy group of odd order and $b_1(B)\leq 2$ and let $q$ be a rational quadratic form of signature $(n+1,1)$ such that $B$ appears in the commensurability class of arithmetic hyperbolic manifolds defined by $q$. Let $p$ be a prime such that $p\equiv 1 \bmod 4e$.
    \begin{enumerate}[(i)]
        \item\label{it:odd_b1_0} If $b_1(B)=0$, then $d(q)$ is a square in $\Q_p$ and $\epsilon_p(q)=1$.
        \item\label{it:odd_b1_1} If $b_1(B)=1$, then $\epsilon_p(q)=1$.
        \item\label{it:odd_b1_2} If $b_1(B)=2$ and $d(q)$ is a square in $\Q_p$, then $\epsilon_p(q)=1$.
    \end{enumerate}
\end{theorem}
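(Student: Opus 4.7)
The plan is to combine Theorem~\ref{thm:main_technical} with the orthogonal decomposition of $f$ supplied by Proposition~\ref{prop:rational_rep_decomp}, isolating the sub-representation on which Proposition~\ref{prop:rational_complex_only} provides control. By Theorem~\ref{thm:main_technical}, the assumption that $B$ appears in the commensurability class defined by $q$ produces a positive-definite rational form $f$ with $q \cong f \oplus \langle 1, -1 \rangle$ and a holonomy representation $\rho : H \to O(f;\Q)$. Decompose $\rho$ over $\Q$ as $\rho_0 \oplus \rho_1$, where $\rho_0$ is the direct sum of the trivial irreducibles of $H$ (of total multiplicity $b_1(B)$, by the fact cited in the proof of Corollary~\ref{cor:b_1geq3}) and $\rho_1$ collects the non-trivial irreducible $\Q$-subrepresentations. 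Proposition~\ref{prop:rational_rep_decomp} then splits $f \cong f_0 \oplus f_1$ into positive-definite rational forms with $f_0 \in \calQ(\rho_0)$ of rank $b_1(B)$ and $f_1 \in \calQ(\rho_1)$ of rank $n - b_1(B)$.

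The next step is to apply Proposition~\ref{prop:rational_complex_only} to $\rho_1$. The key input is the classical observation that an odd-order group $H$ has no non-trivial self-dual irreducible complex representation: since $g \mapsto g^2$ is a bijection of $H$, the Frobenius--Schur indicator of any non-trivial irreducible character $\chi$ equals $\langle \chi, 1 \rangle = 0$. Consequently every irreducible $\C$-component of $\rho_1$ is non-real in the sense of Proposition~\ref{prop:general_trichotomy}. Since $p \equiv 1 \bmod 4e$ forces $p \equiv 1 \bmod e$, Proposition~\ref{prop:rational_complex_only} yields $\epsilon_p(f_1) = 1$ and $(-1)^{r/2} d(f_1)$ a square in $\Q_p$, where $r$ is the rank of $f_1$. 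The second half of the hypothesis, $p \equiv 1 \bmod 4$, makes $-1$ a square in $\Q_p$, so $d(f_1)$ itself is a square in $\Q_p$.

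From here the invariants of $q$ collapse to those of $f_0$. Using multiplicativity of $d$ and $\epsilon_p$ under orthogonal sums, the facts that $d(f_1)$ and $-1$ are both squares in $\Q_p$ kill every cross term $(d(\cdot), d(\cdot))_p$, leaving $d(q) \equiv d(f_0)$ in $\Qpsquare$ and $\epsilon_p(q) = \epsilon_p(f_0)$. Case (i) is then immediate: when $b_1(B) = 0$, $f_0$ is empty, so $d(q) = 1$ and $\epsilon_p(q) = 1$. Case (ii) is immediate because a rank-one form has Hasse--Witt invariant $1$. For case (iii), write $f_0 = \langle a, b \rangle$; the assumption that $d(q)$ is a square in $\Q_p$ forces $ab$ to be a square in $\Q_p$, so $a$ and $b$ lie in the same square class, giving $\epsilon_p(f_0) = (a, b)_p = (a, a)_p = (a, -1)_p = 1$, the last equality because $-1$ is a square in $\Q_p$.

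I do not expect a serious obstacle here. The one step that benefits from a little care is the Frobenius--Schur argument for odd-order groups, which is exactly what permits Proposition~\ref{prop:rational_complex_only} to apply to $\rho_1$; everything else is Hilbert-symbol bookkeeping, streamlined by the hypothesis $p \equiv 1 \bmod 4e$. The factor of $4$ is precisely what lets us discard the $(-1)^{r/2}$ appearing in Proposition~\ref{prop:rational_complex_only} and absorb any stray $(-1, \cdot)_p$ that appears when assembling $\epsilon_p(q)$ from $\epsilon_p(f_0)$, $\epsilon_p(f_1)$, and $\epsilon_p(\langle 1, -1\rangle)$.
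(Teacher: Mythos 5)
Your proposal is correct and follows essentially the same route as the paper: decompose the holonomy representation into its trivial part (of multiplicity $b_1(B)$, via Frobenius--Schur for odd-order groups) and a complementary part with only non-real complex irreducible constituents, split the invariant form accordingly using Proposition~\ref{prop:rational_rep_decomp}, apply Proposition~\ref{prop:rational_complex_only} at primes $p\equiv 1\bmod 4e$, and finish with the same Hilbert-symbol computations in the three cases. The only difference is presentational: the paper diagonalizes and computes over $\Q_p$ directly, while you track $d$ and $\epsilon_p$ through the orthogonal-sum formulas, which is equivalent.
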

\begin{proof}
It is a standard exercise using the Frobenius-Schur indicator to show that over $\C$ any irreducible representation of a group of odd order is either trivial or complex \cite[\S3.5]{FultonHarris}. On the other hand, $b_1(B)$ is equal to the number of trivial sub-representations of its holonomy representation. Let $m=b_1(B)$. Thus we can assume that the holonomy representation $\rho: H \rightarrow GL_n(\Q)$ of $B$ takes the form $\rho=\underbrace{\mathbf{1}\oplus \dots \oplus \mathbf{1}}_{m}\oplus \sigma$, where the decomposition of $\sigma$ into irreducible representations over $\C$ comprises solely of complex representations and $\mathbf{1}$ denotes the trivial representation. By Proposition~\ref{prop:rational_rep_decomp}, every positive definite form $f$ in $\calQ(\rho)$ is equivalent to $\langle a_1, \dots, a_{m} \rangle \oplus g$ for some $g\in \calQ(\sigma)$ and $a_1,\dots, a_m\in \Q_{>0}$.

Thus Theorem~\ref{thm:main_technical} shows that if $B$ appears in the commensurability class defined by a form $q$, then $q$ is equivalent to
\[q'=\langle a_1, \dots, a_{m} \rangle \oplus g \oplus \langle 1, -1\rangle,\]
with $a_1,\dots, a_m$ and $g$ as above.

Now fix a prime $p\equiv 1 \bmod 4e$. Since $p\equiv 1 \bmod 4$, we have that $-1$ is a square in $\Q_p$. Since $p\equiv 1 \bmod e$, Proposition~\ref{prop:rational_complex_only} implies that $d(g)=1 \in \Qpsquare$ and $\epsilon_p(g)=1$. In particular this implies that $q$ is equivalent to $\langle a_1, \dots, a_{m}, 1, \dots, 1\rangle$ over $\Q_p$. This allows us to easily calculate $d(q)\in \Qpsquare$ and $\epsilon_p(q)$. 
\begin{enumerate}[(i)]
    \item If $b_1(B)=0$, then it is immediate that $d(q)$ is a square in $\Q_p$ and that $\epsilon_p(q)=1$.
    \item Likewise if $b_1(B)=1$, then we immediately have $\epsilon_p(q)=1$.
    \item If $b_1(B)=2$, then $d(q)=a_1a_2 \in\Qpsquare$ and 
    \[\epsilon_p(q)=(a_1,a_2)_p=(a_1,-a_1a_2)_p=(a_1,d(q))_p.\]
    Thus we have $\epsilon_p(q)=1$ if $d(q)$ is a square in $\Q_p$.
\end{enumerate}
\end{proof}
\begin{remark}
    The hypothesis that $p\equiv 1 \bmod{4}$ in Theorem~\ref{thm:odd_obstruction} exists merely to simplify the analysis. One can obtain similar, but more complicated conditions for all primes $p\equiv 1 \bmod e$. 
\end{remark}

\begin{repcor}{cor:odd_holonomy}
    \coroddholonomy
\end{repcor}
\begin{proof}
If $b_1(B)\geq 3$, then Corollary~\ref{cor:b_1geq3} shows that $B$ appears as a cusp cross-section in every commensurability class of cusped arithmetic hyperbolic $(n+1)$-manifolds. Conversely, let $q$ be a quadratic form of signature $(n+1,1)$ with $d(q)=-1$ and $\epsilon_p(q)=-1$ for some prime $p\equiv 1\bmod 4|H|$, where $H$ is the holonomy group of $B$. If $b_1(B)\leq 2$, then Theorem~\ref{thm:odd_obstruction} shows that $B$ does not arise as a cusp cross-section in the commensurability class defined by $q$. Since $d(q)=-1$, Proposition~\ref{prop:projective_invariants} implies that $\epsilon_p(q)$ at such a prime is an invariant of the projective equivalence class of $q$. Since there are infinitely many primes satisfying $p\equiv 1\bmod 4|H|$, we may construct infinitely many distinct commensurability classes which do not contain $B$ as a cusp cross-section.  \end{proof}

There exist flat manifolds of odd holonomy group and $b_1(B)\leq 2$ in all dimensions $n\geq 3$. In dimension 3, there is the $\frac13$-twist manifold which has holonomy group $C_3$ and $b_1(B)=1$. Using toral extension, as in Proposition~\ref{prop:toral_extension}, with $\sigma$ the irreducible two-dimensional rational representation of $C_3$, we can construct flat manifolds with $b_1(B)=1$ and holonomy $C_3$ in all odd dimensions. Taking the product with $S^1$ produces flat manifolds with holonomy $C_3$ and $b_1(B)=2$ in all even dimensions. Thus we have the following.

\begin{repcor}{cor:obstruction_all_dims}
    \corobstructionalldims \qed
\end{repcor}

\subsection{The flat 3- and 4-manifolds}
We now leverage Theorem~\ref{thm:main_technical} to obtain a complete classification of which flat 3- and 4-manifolds appear as cusp cross-sections in commensurability classes of arithmetic hyperbolic manifolds. To do this we need to understand the irreducible rational representations of various groups of small order.
\begin{lemma}\label{lem:dim2_reps}
    Let $\rho:H \rightarrow GL_2(\Q)$ be representation of a finite group.
    \begin{enumerate}
        \item If $\rho(H)$ contains an element of order 3, then every form in $\calQ(\rho)$ is equivalent to $\langle 3a,a \rangle$ for some $a\in \Q$.
        \item If $\rho(H)$ contains an element of order 4, then every form in $\calQ(\rho)$ is equivalent to $\langle a,a \rangle$ for some $a\in \Q$.
    \end{enumerate}
\end{lemma}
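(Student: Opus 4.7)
The plan is to exploit the fact that in $GL_2(\Q)$ the rational canonical form of an element of order $3$ or $4$ is essentially unique. Since $1$ is the only cube root of unity in $\Q$, an order-$3$ element cannot admit $1$ as an eigenvalue, and its minimal polynomial must therefore be $x^2+x+1$; hence it is conjugate over $\Q$ to the companion matrix $C_3 = \begin{pmatrix} 0 & -1 \\ 1 & -1 \end{pmatrix}$. Analogously, an order-$4$ element in $GL_2(\Q)$ has minimal polynomial $x^2+1$ and is rationally conjugate to $C_4 = \begin{pmatrix} 0 & -1 \\ 1 & 0 \end{pmatrix}$.

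First I would apply Proposition~\ref{prop:equivalence_implies_conjugation} to replace $\rho$ by a $GL_2(\Q)$-conjugate in which the distinguished order-$3$ (resp.\ order-$4$) element of $\rho(H)$ equals $C_3$ (resp.\ $C_4$). This turns the problem into classifying the symmetric $Q \in GL_2(\Q)$ such that $C_i^T Q C_i = Q$, since $\calQ$ of the conjugated representation consists of the same forms up to rational equivalence.

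Next I would directly solve $C_i^T Q C_i = Q$ for a symmetric $Q = \begin{pmatrix} p & q \\ q & r \end{pmatrix}$. In the order-$3$ case, a short computation of $C_3^T Q C_3$ and comparison to $Q$ forces $r = p$ and $q = -p/2$, so $Q = p \begin{pmatrix} 1 & -1/2 \\ -1/2 & 1 \end{pmatrix}$; the change of basis by $\begin{pmatrix} 1 & 1 \\ 1 & -1 \end{pmatrix}$ diagonalizes this to $\langle p, 3p \rangle$, i.e.\ $\langle 3a, a\rangle$ with $a = p$. In the order-$4$ case, the analogous computation gives $r = p$ and $q = 0$, so $Q = pI = \langle a, a \rangle$. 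Since the invariance under a single element of $\rho(H)$ already pins $Q$ down to this one-parameter family, invariance under any remaining elements of $\rho(H)$ can only restrict the scalar $p$ further but cannot change the form of $Q$.

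The main obstacle is genuinely mild: it amounts to verifying the matrix identities above and recognizing that the two explicit rational canonical forms exhaust the possibilities. The one conceptual point worth flagging is that conjugation by a rational matrix sends each $Q \in \calQ(\rho)$ to a rationally equivalent form, so the rational equivalence class of a form in $\calQ(\rho)$ is preserved under the reduction to canonical form.
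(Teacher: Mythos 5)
Your proof is correct and takes essentially the same route as the paper's: reduce by rational conjugacy so that the order-$3$ (resp.\ order-$4$) element is the companion matrix $\begin{pmatrix} 0 & -1 \\ 1 & -1\end{pmatrix}$ (resp.\ $\begin{pmatrix} 0 & -1 \\ 1 & 0\end{pmatrix}$), solve the invariance equation $M^TQM=Q$ for symmetric $Q$ to get a one-parameter family, and diagonalize to $\langle 3a,a\rangle$ (resp.\ $\langle a,a\rangle$). The only difference is that you spell out the rational-canonical-form justification for the conjugacy, which the paper simply asserts.
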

\begin{proof}
    Any matrix of order three in $GL_2(\Q)$ is conjugate to $M_3=\begin{pmatrix}
        0 & -1 \\
        1 & -1
    \end{pmatrix}$. Thus, we may assume that image of $\rho$ contains $M_3$. An element of $\calQ(\rho)$ is represented by a symmetric matrix satisfying $M_3^TQM_3=Q$. One can verify that such a matrix necessarily takes the form $Q=\begin{pmatrix}
        2a & -a\\
        -a&2a
    \end{pmatrix}$. A quadratic form represented by such a $Q$ is equivalent to the diagonal form $\langle 3a,a\rangle$.

    Any matrix of order four in $GL_2(\Q)$ is conjugate to $M_4=\begin{pmatrix}
        0 & -1 \\
        1 & 0
    \end{pmatrix}$. Every symmetric matrix $Q$ satisfying $M_3^TQM_3=Q$ takes the form $Q=\begin{pmatrix}
        a & 0\\
        0&a
    \end{pmatrix}$ and hence corresponds to a diagonal quadratic form $\langle a,a\rangle$.
\end{proof}
We first recover the classification of cusp cross-sections for hyperbolic arithmetic 4-manifolds as derived by the second author \cite{Sell}. The compact orientable flat 3-manifolds are listed in Table~\ref{table:3-manifolds}. Note that for five-dimensional quadratic forms the projective invariants are simply the Hasse-Witt invariants $\epsilon_p(q)$ (see Proposition~\ref{prop:projective_invariants}).
\begin{theorem}\label{thm:3-dim_analysis}
    Let $q$ be a rational quadratic form of signature $(4,1)$. The commensurability class of arithmetic hyperbolic manifolds defined by $q$ contains the following cusp cross-sections: 
    \begin{enumerate}
        \item the 3-torus, $\frac{1}{2}$-twist manifold and the Hantsche-Wendt manifold; 
        \item the $\frac{1}{3}$-twist and $\frac{1}{6}$-twist manifolds if and only if $\epsilon_p(q)=1$ for all primes $p\equiv 1 \bmod{3}$ and
        \item the $\frac{1}{4}$-twist manifold if and only if $\epsilon_p(q)=1$ for all primes $p\equiv 1 \bmod{4}$.
    \end{enumerate}
\end{theorem}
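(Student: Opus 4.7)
The plan is to run through each of the six orientable compact flat 3-manifolds (3-torus, $\frac{1}{2}$-twist, $\frac{1}{3}$-twist, $\frac{1}{4}$-twist, $\frac{1}{6}$-twist, and the Hantzsche-Wendt manifold) and apply Theorem~\ref{thm:main_technical}: the manifold $B$ with holonomy representation $\rho: H \to GL_3(\Q)$ appears in the commensurability class of $q$ if and only if there is a positive-definite $f \in \calQ(\rho)$ with $f \oplus \langle 1,-1\rangle$ rationally equivalent to $q$. The 3-torus, $\frac{1}{2}$-twist, and Hantzsche-Wendt manifold all have holonomy of the form $(C_2)^k$, so Corollary~\ref{cor:Z2holonomy} (or equivalently Corollary~\ref{cor:b_1geq3} applied after checking that the holonomy representation is a sum of three rational characters) dispatches these cases: they appear in every commensurability class.

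For the remaining three manifolds, the holonomy representation decomposes over $\Q$ as $\rho = \mathbf{1} \oplus \tau$ where $\tau$ is a two-dimensional irreducible rational representation whose image contains an element of order $3$ (the $\frac{1}{3}$- and $\frac{1}{6}$-twists) or order $4$ (the $\frac{1}{4}$-twist). By Proposition~\ref{prop:rational_rep_decomp}, every positive-definite $f \in \calQ(\rho)$ is equivalent to $\langle a \rangle \oplus g$ with $a \in \Q_{>0}$ and $g \in \calQ(\tau)$ positive-definite. Lemma~\ref{lem:dim2_reps} then pins down the shape of $g$: either $g \sim \langle 3b, b\rangle$ or $g \sim \langle b, b \rangle$ for some $b \in \Q_{>0}$. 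Thus the question in each case becomes: given $q$ of signature $(4,1)$, do there exist positive rationals $a,b$ such that $q$ is equivalent to the model form $\langle a, 3b, b, 1, -1 \rangle$ (respectively $\langle a, b, b, 1, -1\rangle$)?

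I would answer this by computing the discriminant and the Hasse--Witt invariants of each model form and invoking Theorem~\ref{thm:quadratic_classification}. The discriminant computation forces $a$ modulo squares: for the $C_3$/$C_6$ case, $-3a \equiv d(q)$, and for the $C_4$ case, $-a \equiv d(q)$; in either case $a$ can be chosen positive. One then computes, after simplification via properties of the Hilbert symbol, that $\epsilon_p(\langle a, 3b, b, 1, -1\rangle) = (-3, b)_p \cdot (d(q), -3)_p$ and $\epsilon_p(\langle a, b, b, 1, -1\rangle) = (-1, b)_p \cdot (d(q), -1)_p$ (with $a$ already substituted), so matching $\epsilon_p(q)$ amounts to finding positive $b$ realizing a prescribed system of Hilbert symbols with $-3$ or $-1$. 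Proposition~\ref{prop:Hilbert_symbol_positive} makes this possible if and only if the target is trivial whenever $-3$ (respectively $-1$) is a square in $\Q_p$; by elementary quadratic reciprocity this is precisely $p \equiv 1 \pmod{3}$ (respectively $p \equiv 1 \pmod{4}$). Necessity follows immediately from the same formulas, since both Hilbert-symbol factors on the right vanish at those primes. The main obstacle is the bookkeeping in these symbol calculations, together with checking that the target sequence of $\pm 1$'s satisfies the hypotheses of Proposition~\ref{prop:Hilbert_symbol_positive} — the finiteness is clear, and the global parity condition is forced by the signature constraint $\prod_p \epsilon_p(q) = 1$ combined with Hilbert reciprocity applied to the $(d(q), -3)_p$ or $(d(q), -1)_p$ factors.
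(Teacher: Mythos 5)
Your proposal follows essentially the same route as the paper: the $(C_2)^k$-holonomy manifolds are dispatched by Corollary~\ref{cor:Z2holonomy}, and for the three twist manifolds you use Theorem~\ref{thm:main_technical}, Proposition~\ref{prop:rational_rep_decomp} and Lemma~\ref{lem:dim2_reps} to reduce to deciding when $q$ is equivalent to $\langle a,3b,b,1,-1\rangle$ or $\langle a,b,b,1,-1\rangle$ with $a,b\in\Q_{>0}$, which you settle by computing discriminants and Hasse--Witt invariants and invoking Proposition~\ref{prop:Hilbert_symbol_positive}; this is exactly the paper's proof. One bookkeeping correction is needed: for these model forms one gets $\epsilon_p=(ab,-3)_p(3,-1)_p$ and $\epsilon_p=(ab,-1)_p$ respectively, so after substituting $a\equiv-3d(q)$ (resp.\ $a\equiv-d(q)$) the invariants are $(-1,-1)_p\,(d(q),-3)_p\,(b,-3)_p$ (resp.\ $(-1,-1)_p\,(d(q),-1)_p\,(b,-1)_p$); your displayed formulas omit the factor $(-1,-1)_p$. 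This factor is nontrivial only at $p=2$, so necessity is unaffected (at primes with $-3$, resp.\ $-1$, a square all extra symbols are trivial), but it is exactly what makes your parity check work: with it, $\prod_p(d(q),-3)_p=-1$ and $\prod_p(-1,-1)_p=-1$ combine with $\prod_p\epsilon_p(q)=1$ to give an even target for Proposition~\ref{prop:Hilbert_symbol_positive}, whereas the formula as you wrote it would produce a target of odd parity and the construction of $b$ would appear impossible. With that factor restored, your argument goes through verbatim and coincides with the paper's.
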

\begin{proof}
We consider the cases in turn.

The 3-torus, $\frac{1}{2}$-twist manifold and the Hantsche-Wendt manifold all have holonomy of the form $(C_2)^k$. By Corollary~\ref{cor:Z2holonomy}, they appear as cusp cross-sections in the commensurability class defined by $q$.

For the $\frac{1}{3}$-twist and $\frac{1}{6}$-twist manifolds, we have $b_1(B)=1$ and an element of order three in the image of the holonomy representation, thus the holonomy representation must decompose over $\Q$ as $\rho\cong \rho_1 \oplus \mathbf{1}$, where $\rho_2$ is 2-dimensional and contains an element of order three. Thus using Lemma~\ref{lem:dim2_reps}, we see that these manifolds appear in the commensurability class defined by $q$ if and only if $q$ is equivalent to $\langle 3a,a,b,1,-1\rangle$ for some $a,b\in\Q_{>0}$.
The Hasse-Witt invariants of such a $q$ can be calculated to be
    \[
    \epsilon_p(q)= (3,-1)_p (-3,ab)_p.
    \]
    Since $-3$ is a square in $\Q_p$ if and only if $p\equiv 1 \mod 3$, this implies that $\epsilon_p(q)=(3,-1)_p$ whenever $p\equiv 1 \bmod 3$. Since $(3,-1)_p=1$ unless $p=2$ or 3, this implies that $\epsilon_p(q)=1$ whenever $p\equiv 1 \bmod 3$.
Conversely, every form $q$ of signature $(4,1)$ satisfying $\epsilon_p(q)=1$ for all $p\equiv 1\mod 3$ is equivalent to $\langle 3a,a,b,1,-1\rangle$ for some $a,b\in\Q_{>0}$. Proposition~\ref{prop:Hilbert_symbol_positive} gives $c>0$ such that $(-3,c)_p=\epsilon_p(q)(3,-1)_p$ for all primes $ p $. Setting $b=-3d(q)$ and $a=-3d(q)c$ yields a form with equivalent to $q$.

For the $\frac{1}{4}$-twist manifold we see that the holonomy representation must decompose over $\Q$ as $\rho\cong \rho_1 \oplus \mathbf{1}$, where $\rho_2$ is 2-dimensional and contains an element of order four. Using Lemma~\ref{lem:dim2_reps} again, we see that these manifolds appear in the commensurability class defined by $q$ if and only if is equivalent to $\langle a,a,b,1,-1\rangle$ for some $a,b\in\Q_{>0}$
    The Hasse-Witt invariants of such a $q$ can be calculated to be
    \[
    \epsilon_p(q)= (-1,ab)_p.
    \]
    Since $-1$ is a square in $\Q_p$ if and only if $p\equiv 1 \mod 4$, this implies that $\epsilon_p(q)=1$ whenever $p\equiv 1 \bmod 4$. 
    
    As in the previous case, Proposition~\ref{prop:Hilbert_symbol_positive} can be applied to show that every form of signature $(4,1)$ with $\epsilon_p(q)=1$ for all $p\equiv 1\bmod 4$ is equivalent to $\langle a,a,b,1,-1\rangle$ for some $a,b\in \Q_{>0}$.
\end{proof}

\begin{table}
\centering
\begin{tabular}{|c|c|c|l|} \hline
		$ B $ & $ \text{Hol}(B) $ & $b_1(B)$& Conditions on $ q $ \\ \hline
		3-torus/$ O^3_1 $  & $ C_1 $& $3$ & None \\ \hline
		$\frac12$-twist/$ O^3_2 $  & $ C_2 $& $1$ &None \\ \hline
		$\frac13$-twist/$ O^3_3 $  & $ C_3 $& $1$ & $\epsilon_p(q)=1$ for $p\equiv 1 \bmod{3}$  \\ \hline
		 $\frac14$-twist/$ O^3_4 $ & $ C_4 $& $1$ & $\epsilon_p(q)=1$ for $p\equiv 1 \bmod{4}$ \\ \hline
		 $\frac16$-twist/$ O^3_5 $ & $ C_6 $& $1$ & $\epsilon_p(q)=1$ for $p\equiv 1 \bmod{3}$\\ \hline
		Hantzsche-Wendt/$ O^3_6 $  & $ C_2^2 $& $0$ & None \\ \hline
\end{tabular}
\caption{The orientable flat 3-manifolds}
\label{table:3-manifolds}
\end{table}

Next we analyze the compact, orientable flat 4-manifolds. There are 27 homeomorphism classes of compact, orientable flat 4-manifolds, described in \cite{LRT13}. These manifolds are listed in Table~\ref{table:4-manifolds} along with their holonomy groups and first Betti numbers. This data along with the nomenclature is taken from \cite[Table~3]{LRT13}. Using similar analysis as for the 3-manifolds above, we find that 15 of these manifolds are obstructed from occurring as cusp cross-sections in some commensurability classes, and the remaining 12 occur in all commensurability classes.

For quadratic forms of rank six, Proposition~\ref{prop:projective_invariants} shows that invariants of the projective equivalence class are the discriminant $d(q)$ and $\epsilon_p(q)$ for all primes $p$ such that $-d(q)$ is a square in $\Q_p$.

\begin{reptheorem}{thm:4-dim_analysis}
    \thmfourdim
\end{reptheorem}

\begin{proof}
If $H$ contains no 3- or 4-torsion, then $H$ is one of $C_1$, $C_2$ or $(C_2)^2$. In these cases Corollary~\ref{cor:Z2holonomy} shows that $B$ appears in every commensurability class. Thus we may assume that $H$ contains 3- or 4-torsion. For each of these manifolds we wish to understand the decomposition of the holonomy representation into irreducible representations over $\Q$. First note that in all cases $b_1(B)\geq 1$, so the holonomy representation always contains at least one rational subrepresentation of dimension one. If $H$ contains 3- or 4-torsion, then the fact that $\rho$ is faithful implies $\rho$ must contain an irreducible rational subrepresentation of dimension at least two. However, the groups $C_4$, $C_3$, $C_6$, $D_8$, $D_6$ and $D_{12}$ do not admit any irreducible  three dimensional representations over $\Q$, so for these holonomy groups the holonomy representation decomposes over $\Q$ as $\rho \cong \rho_1 \oplus \rho_2\oplus \rho_3$, where $\rho_1$ is two dimensional and $\rho_2, \rho_3$ are one dimensional. 
\begin{itemize}
    \item If $H$ is isomorphic to $C_4$ or $D_8$, the the representation $\rho_1$ must contain an element of order 4. This means that $B$ appears in the commensurability class of $q$ if and only if $q$ is equivalent to a quadratic form $\langle a,a, b,c,1,-1\rangle$ for some $a,b,c\in \Q_{>0}$. Such a $q$ satisfies $d(q)=-bc$ and one can calculate its Hasse-Witt invariants to be
\[
\epsilon_p(q)=(ab,-1)_p(-b,-d(q))_p
\]
Thus, we see that $\epsilon_p(q)=1$ whenever both $-d(q)$ and $-1$ are squares in $\Q_p$. However $-1$ is a square in $\Q_p$ if and only if $p\equiv 1 \bmod 4$.

Conversely, any form $q$ of signature $(5,1)$ such that $\epsilon_p(q)=1$ for all $p$ such that $-d(q)$ and $-1$ are both squares in $\Q_p$ is equivalent to one of the form $\langle a,a, b,c,1,-1\rangle$ for some $a,b,c\in \Q_{>0}$. Proposition~\ref{prop:Hilbert_symbol_positive} allows us to pick $\alpha\in \Q_{>0}$ such that $(\alpha,-1)_p=\epsilon_p(q)$ for all primes where $-d(q)$ is a square in $\Q_p$. Thus we see that $\epsilon_p(q)(\alpha,-1)_p=1$ for all primes where $-d(q)$ is a square in $\Q_p$. This allows us to apply Proposition~\ref{prop:Hilbert_symbol_positive} again to obtain $\beta\in \Q_{>0}$ such that $(\beta,-d(q))_p=\epsilon_p(q)(\alpha,-1)_p$ for all primes. Consequently we have $(\beta d(q),-d(q))_p=\epsilon_p(q)(\alpha,-1)_p$ for all primes. If we set $b=-\beta d(q)>0$, $a=\alpha b$ and $c=\beta$, then this yields a form equivalent to $q$.

\item If $H$ is isomorphic to $C_3$, $C_6$, $D_6$ or $D_{12}$, then $\rho_1$ must contain an element of order 3. This means that $B$ appears in the commensurability class of $q$ if and only if $q$ is equivalent to $\langle 3a,a, b,c,1,-1\rangle$, for some $a,b,c\in \Q_{>0}$. Such a $q$ satisfies $d(q)=-3bc$ and
\[
\epsilon_p(q)=(ab,-3)_p(-3b,-d(q))_p(3,-1)_p.
\]
Thus, we see that $\epsilon_p(q)=(3,-1)_p$ whenever both $-d(q)$ and $-3$ are squares in $\Q_p$. However $-3$ is a square in $\Q_p$ if and only if $p\equiv 1 \bmod 3$. Since $(3,3)_p=-1$ if and only if $p=2,3$, this reduces to the condition that $\epsilon_p(q)=1$.

As in the previous case a double application of Proposition~\ref{prop:Hilbert_symbol_positive} can be used to show that any form $q$ of signature $(5,1)$ such that $\epsilon_p(q)=1$ for all $p$ such that $-d(q)$ and $-3$ are both squares in $\Q_p$ is equivalent to $\langle 3a,a, b,c,1,-1\rangle$
for some $a,b,c\in \Q_{>0}$.
\end{itemize}
Thus it remains to treat the case that $H$ is the alternating group $A_4$. Since we know the holonomy representation is faithful, we see that $\rho$ must contain a copy of the unique irreducible 3-dimensional representation of $A_4$. Thus the holonomy representation decomposes as
$\rho\cong \rho_1 \oplus \rho_2$, where $\rho_1$ is 3-dimensional and $\rho_2$ is 1-dimensional. By direct calculation one can show that every form in $\calQ(\rho_1)$ is equivalent to $\langle a,a,a \rangle$ for some $a\in \Q$. This means that $B$ appears in the commensurability class of $q$ if and only if $q$ is equivalent to $\langle a,a, a,b,1,-1\rangle$ for some $a,b,c\in \Q_{>0}$. Such a $q$ satisfies $d(q)=-ab$ and
\[
\epsilon_p(q)=(-a,-d(q))_p.
\]
Thus we have that $\epsilon_p(q)=1$ whenever $-d(q)$ is a square in $\Q_p$. Once again, Proposition~\ref{prop:Hilbert_symbol_positive} can be used to obtain the converse. If $q$ is a quadratic form of signature $(5,1)$ such that $\epsilon_p(q)=1$ whenever $-d(q)$ is a square in $\Q_p$, then there is $b\in \Q_{>0}$ such that $\epsilon_p(q)=(b,-d(q))_p=(b d(q),-d(q))_p$ for all $p$. If we set $a=-b d(q)$, then $q$ is equivalent to $\langle a,a, a,b,1,-1\rangle$.
\end{proof}
\begin{remark}
We see that for each discriminant $d<0$, there is a unique commensurability class containing the manifolds with holonomy $A_4$ as cusp cross-sections.
\end{remark}

\begin{table}
\centering
\begin{tabular}{|c|c|c|c|l|} \hline
		$ B $ & $ \text{Hol}(B) $ & $b_1(B)$& $\rho$ irreducibles& Conditions on $ q $ \\ \hline
		$ O^4_1 $ & $ C_1 $& $4$ & $1+1+1+1$ & None \\ \hline
		$ O^4_2 $ & $ C_2 $& $2$ & $1+1+1+1$ &None \\ \hline
		$ O^4_3 $ & $ C_2 $& $2$ & $1+1+1+1$& None \\ \hline
		$ O^4_4 $ & $ C_3 $& $2$ & $2+1+1$ &$ \epsilon_p(q) = 1 $ for $ p \equiv 1 \bmod 3 $ \\ \hline
		$ O^4_5 $ & $ C_3 $& $2$ & $2+1+1$ &$ \epsilon_p(q) = 1 $ for $ p \equiv 1 \bmod 3 $ \\ \hline
		$ O^4_6 $ & $ C_4 $& $2$ & $2+1+1$ &$ \epsilon_p(q) = 1 $ for $ p \equiv 1 \bmod 4 $  \\ \hline
		$ O^4_7 $ & $ C_4 $& $2$ & $2+1+1$ &$ \epsilon_p(q) = 1 $ for $ p \equiv 1 \bmod 4$ \\ 
        \hline
		$ O^4_8 $ & $ C_6 $& $2$ & $2+1+1$ &$ \epsilon_p(q) = 1 $ for $ p \equiv 1 \bmod 3 $ \\ \hline
		$ O^4_9 $ & $ (C_2)^2 $& $1$ &$1+1+1+1$& None \\ \hline
		$ O^4_{10} $ & $ (C_2)^2 $& $1$ &$1+1+1+1$& None \\ \hline
		$ O^4_{11} $ & $ (C_2)^2 $&$1$ &$1+1+1+1$& None \\ \hline
		$ O^4_{12} $ & $ (C_2)^2 $& $1$ &$1+1+1+1$& None \\ \hline
		$ O^4_{13} $ & $ (C_2)^2 $& $1$ &$1+1+1+1$& None \\ \hline
		$ O^4_{14} $ & $ (C_2)^2 $& $1$ &$1+1+1+1$& None \\ \hline
		$ O^4_{15} $ & $ (C_2)^2 $& $1$ &$1+1+1+1$& None \\ \hline
		$ O^4_{16} $ & $ (C_2)^2 $& $1$ &$1+1+1+1$& None \\ \hline
		$ O^4_{17} $ & $ (C_2)^2 $& $1$ &$1+1+1+1$& None \\ \hline
		$ O^4_{18} $ & $ D_6 $ & $1$& $2+1+1$& $ \epsilon_p(q) = 1 $ for $ p \equiv 1 \bmod 3 $ \\ \hline
		$ O^4_{19} $ & $ D_6 $ &$1$&$2+1+1$& $ \epsilon_p(q) = 1 $ for $ p \equiv 1 \bmod 3 $ \\ \hline
		$ O^4_{20} $ & $ D_6 $ &$1$&$2+1+1$& $ \epsilon_p(q) = 1 $ for $ p \equiv 1 \bmod 3 $ \\ \hline
		$ O^4_{21} $ & $ D_8 $ &$1$&$2+1+1$& $ \epsilon_p(q) = 1 $ for $ p \equiv 1 \bmod 4 $ \\ \hline
		$ O^4_{22} $ & $ D_8 $ &$1$&$2+1+1$& $ \epsilon_p(q) = 1 $ for $ p \equiv 1 \bmod 4$ \\ \hline
		$ O^4_{23} $ & $ D_8 $ &$1$&$2+1+1$& $ \epsilon_p(q) = 1 $ for $ p \equiv 1 \bmod 4$ \\ \hline
		$ O^4_{24} $ & $ D_8 $ &$1$&$2+1+1$& $ \epsilon_p(q) = 1 $ for $ p \equiv 1 \bmod 4$ \\ \hline
		$ O^4_{25} $ & $ D_{12} $ &$1$&$2+1+1$& $ \epsilon_p(q) = 1 $ for $ p \equiv 1 \bmod 3 $\\ \hline
		$ O^4_{26} $ & $ A_4 $ &$1$& $3+1$ &$ \epsilon_p(q) = 1 $  \\ \hline
		$ O^4_{27} $ & $ A_4 $ &$1$& $3+1$& $ \epsilon_p(q) = 1 $  \\ \hline
\end{tabular}
\caption{The orientable flat 4-manifolds. The conditions on $q$ apply to the $\epsilon_p$ only at the primes such that $d(q)$ is a square in $\Q_p$. The column entitled ``$\rho$ irreducibles'' lists the dimensions of the irreducible components of the holonomy representation over $\Q$.}
\label{table:4-manifolds}
\end{table}
 
\subsection{Holonomy of the form $(C_3)^k$}
In cases where the holonomy group is sufficiently simple it is possible to fully determine the commensurability classes of arithmetic hyperbolic manifolds containing a given manifold. We concentrate on the case of $b_1(B)=0$, since this yields the most interesting examples, but similar analysis can carried out to obtain obstructions when $b_1(B)=1$ or $2$, as well.
\begin{theorem}\label{thm:Z3b1=0}
    Let $B$ be a flat $n$-manifold with holonomy group of the form $(C_3)^k$ and $b_1(B)=0$. Let $q$ be a rational quadratic form of signature $(n+1,1)$. Then $B$ appears as a cusp cross-section in the commensurability class of hyperbolic manifolds defined by $q$ if and only if 
    \[d(q)=\begin{cases}
        -1 &\text{if $n\equiv 0 \bmod 4$}\\
        -3 & \text{if $n\equiv 2 \bmod 4$}
    \end{cases}\]
    and $\epsilon_p(q)=1$ for all primes $p$ such that $p\equiv 1 \bmod 3$.
\end{theorem}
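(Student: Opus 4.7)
The plan is to exploit the rigid representation theory of $(C_3)^k$. Over $\Q(\zeta_3)$ every non-trivial character of $(C_3)^k$ has Galois orbit of size two, so every non-trivial irreducible $\Q$-representation of $(C_3)^k$ is two-dimensional. Since $b_1(B)=0$ kills all trivial summands, $n$ must be even (write $n=2m$) and the holonomy representation $\rho$ decomposes over $\Q$ as a sum of $m$ such two-dimensional irreducibles. Combining Proposition~\ref{prop:rational_rep_decomp} with Lemma~\ref{lem:dim2_reps}, every positive-definite rational form in $\calQ(\rho)$ is equivalent to
\[f_{a_1,\dots,a_m}:=\bigoplus_{i=1}^m \langle 3a_i,a_i\rangle\]
for some positive rationals $a_i$. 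By Theorem~\ref{thm:main_technical}, the commensurability classes containing $B$ as a cusp cross-section are precisely those defined by $f_{a_1,\dots,a_m}\oplus\langle 1,-1\rangle$ as the $a_i$ range over $\Q_{>0}$, so the task reduces to pinning down the projective invariants of such forms.

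For the ``only if'' direction, the discriminant statement is immediate: $d(f_{a_1,\dots,a_m})=3^m$ modulo squares, so $d(q)=-3^m$, which is $-1$ or $-3$ according to the parity of $m$, matching the claimed dichotomy between $n\equiv 0$ and $n\equiv 2$ mod $4$. For the Hasse-Witt part, each non-trivial two-dimensional irreducible $\Q$-representation of $(C_3)^k$ splits over $\C$ into a non-trivial character and its inverse, neither of which is self-dual, so every complex constituent of $\rho$ is non-real in the sense of Proposition~\ref{prop:general_trichotomy}. Applying Proposition~\ref{prop:rational_complex_only} with exponent $e=3$ gives $\epsilon_p(f_{a_1,\dots,a_m})=1$ at every prime $p\equiv 1\pmod{3}$, and transferring to $q$ via $\epsilon_p(q)=\epsilon_p(f)(d(f),-1)_p$ together with the vanishing of $(3,-1)_p$ at odd primes $p\neq 3$ yields $\epsilon_p(q)=1$ at all such primes.

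For the ``if'' direction, given $q$ with the stated invariants I would set $a_2=\cdots=a_m=1$ and solve for $a_1=\alpha>0$ so that $f_{\alpha,1,\dots,1}\oplus\langle 1,-1\rangle$ is rationally equivalent to $q$. The discriminants already agree by hypothesis, so Theorem~\ref{thm:quadratic_classification} reduces the problem to matching Hasse-Witt invariants at every prime. A direct expansion shows this becomes an equation of the form $(\alpha,-3)_p=c_p$ for an explicit sequence $c_p$ built from $\epsilon_p(q)$ together with Hilbert symbols involving $3$ and $-1$. Proposition~\ref{prop:Hilbert_symbol_positive} then produces the required $\alpha\in\Q_{>0}$ once one verifies that $c_p=1$ whenever $-3$ is a square in $\Q_p$ (equivalently $p\equiv 1\pmod{3}$), which follows from the hypothesis on $\epsilon_p(q)$, and that $\prod_p c_p=1$, which comes from the signature-imposed constraint $\prod_p \epsilon_p(q)=1$ combined with Hilbert reciprocity $\prod_v(3,-1)_v=1$. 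The main obstacle is precisely this bookkeeping: correction terms from $\langle 1,-1\rangle$, from the interaction $(d(f),-1)_p$ in forming the sum, and from the $(3,3)_p$ factors in the multi-block Hasse-Witt computation must all be shepherded into a form amenable to Proposition~\ref{prop:Hilbert_symbol_positive}, after which Theorem~\ref{thm:main_technical} closes the argument.
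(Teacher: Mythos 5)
Your proposal is correct and follows essentially the same route as the paper: decompose the holonomy representation into two-dimensional rational irreducibles via Proposition~\ref{prop:rational_rep_decomp} and Lemma~\ref{lem:dim2_reps}, apply Theorem~\ref{thm:main_technical} to reduce to forms $\bigoplus_i\langle 3a_i,a_i\rangle\oplus\langle 1,-1\rangle$, read off the discriminant and Hasse--Witt invariants, and recover the converse by choosing the $a_i$ via Proposition~\ref{prop:Hilbert_symbol_positive}. The only cosmetic difference is that you obtain $\epsilon_p=1$ at $p\equiv 1\bmod 3$ from Proposition~\ref{prop:rational_complex_only} (as in Theorem~\ref{thm:odd_obstruction}) rather than by the paper's direct computation $\epsilon_p(q)=(3,\pm 1)_p(d(f),-3)_p$, which is an equally valid bookkeeping of the same facts.
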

\begin{proof}
    Let $H\cong (C_3)^k$ for $k\geq 1$. Every non-trivial irreducible rational representation $\sigma$ for $H$ is 2-dimensional and its image contains a matrix of order three. Thus Lemma~\ref{lem:dim2_reps} applies to show that every quadratic form in $\calQ(\sigma)$ is equivalent to a diagonal form $\langle 3a, a \rangle$ for some $a$.

    Since $b_1(B)=0$, we see that the holonomy representation can be decomposed as
    \[
    \rho\cong \rho_1 \oplus \dots \oplus \rho_{k}
    \]
    where each $\rho_i$ is 2-dimensional and irreducible over $\Q$ and $n=2k$. Thus $B$ appears in the commensurability class defined by $q$ if and only if $q$ is equivalent to
   \begin{equation}\label{eq:b1=0_3-torsion_form}
    q\cong \langle 3a_1, a_1, \dots, 3a_k, a_k, 1,-1\rangle.
    \end{equation}
    for some choice of $a_1,\dots, a_k\in\Q_{>0}$. Alternatively we may write this in the form 
    \[q\cong  3f\oplus f\oplus \langle 1,-1\rangle,\] where $f=\langle a_1, \dots, a_k\rangle$. For such a $q$ we have
    \[
    d(q)=\begin{cases}
        -1 & n\equiv 0 \bmod 4\\
        -3 & n\equiv 2 \bmod 4
    \end{cases}
    \]
    and
    \begin{align*}
        \epsilon_p(q')&=\epsilon_p(3f)\epsilon_p(f) (d(f),-1)_p(d(3f),-d(f))_p\\
        &=\begin{cases}
        (3,(-1)^{\frac{n}{4}})_p(d(f),-3)_p & n\equiv 0 \bmod 4\\
        (3,(-1)^{\frac{n-2}{4}})_p(d(f),-3)_p & n\equiv 2 \bmod 4.
    \end{cases}
    \end{align*}
    Since $(3,-1)_p=-1$ only for $p=2,3$ and $-3$ is not a square in $\Q_2$ or $\Q_3$, we see that $q'$ satisfies $\epsilon_p(q')=1$ for all $p$ such that $-3$ is a square in $\Q_p$. Conversely, since $f$ can be chosen to make $d(f)$ any positive rational, we see that any $q$ satisfying $d(q)=\begin{cases}
        -1 & n\equiv 0 \bmod 4\\
        -3 & n\equiv 2 \bmod 4
    \end{cases}$ and $\epsilon_p(q)=1$ for all $p$ such that $-3$ is a square in $\Q_p$ is equivalent to some $q'$ of the form given in \eqref{eq:b1=0_3-torsion_form}.
    \end{proof}
The case $n\equiv 2\bmod 4$ in Theorem~\ref{thm:Z3b1=0} is particularly interesting.
In this case the rank of the form $q$ is divisible by 4 and so its projective equivalence class is determined by the discriminant $d(q)$ and the Hasse-Witt invariants $\epsilon_p(q)$ at primes such that $d(q)$ is a square in $\Q_p$. However $-3$ is a square in $\Q_p$ if and only if $p\equiv 1 \bmod 3$, so Theorem~\ref{thm:Z3b1=0} determines all the projective invariants of $q$. In particular, there is a unique projective class satisfying these conditions. Examples of manifolds with holonomy $(C_3)^2$ and $b_1(B)=0$ exist in all even dimensions $n\geq 8$. An example of such a manifold in dimension $8$ is described in \cite[\S2]{Hiss2021quaternionic} (see also \cite{Hiller1986flat}). Examples in all higher even dimensions can then be constructed using toral extension (Proposition~\ref{prop:toral_extension}).

\begin{repcor}{cor:unique_class}
    \coruniqueclass \qed
\end{repcor}

\subsection{Holonomy of the form $(C_p)^k$}
Now we investigate manifolds with holonomy group of the form $(C_p)^k$ and $b_1(B)=0$. For every odd prime $p$ there is a compact flat manifold $B$ with $b_1(B)=0$, holonomy group $H=(C_p)^2$ and dimension $p^2-1$ \cite[Proposition~3.3]{Hiller1986flat}. Using toral extension, we are thus able to obtain a compact flat manifold with $b_1(B)=0$, holonomy group $H=(C_p)^2$ and dimension $p^2-1+k(p-1)$ for any $k\geq 0$.

Note that the following result is consistent with the calculation for $C_3$ carried out in Lemma~\ref{lem:dim2_reps}.
\begin{prop}\label{prop:prime_reps}
    Let $p$ be an odd prime. Let $\rho:C_p \rightarrow GL_{p-1}(\Q)$ be the unique non-trivial irreducible representation for $C_p$. Then any non-degenerate quadratic form $q\in \calQ(\rho)$ satisfies $d(q)=p \in \Q^\times/ (\Q^\times)^2$.
\end{prop}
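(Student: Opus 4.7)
The plan is to exhibit a single explicit non-degenerate invariant form with discriminant $p$, parametrize every other symmetric invariant form using the endomorphism algebra $\End(\rho)$, and verify that this parametrization alters the discriminant only by a square in $\Q^\times$.

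First I realize $\rho$ concretely as the action of $C_p$ on the augmentation hyperplane $V = \{(x_1,\dots,x_p) \in \Q^p : \sum x_i = 0\}$ induced by cyclic coordinate permutation. The standard inner product on $\Q^p$ restricts to a non-degenerate invariant form $q_0 \in \calQ(\rho)$. In the basis $\{e_i - e_{i+1}\}_{i=1}^{p-1}$ of $V$, the Gram matrix $Q_0$ of $q_0$ is the tridiagonal matrix with $2$ on the diagonal and $-1$ directly above and below the diagonal (the Cartan matrix of type $A_{p-1}$). A standard recursion on determinants shows $\det(Q_0)=p$, so $d(q_0) = p$ in $\Q^\times/(\Q^\times)^2$.

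Next I identify the endomorphism algebra. The Wedderburn decomposition $\Q[C_p] \cong \Q \times \Q[\zeta_p]$ makes $V$ a one-dimensional $\Q[\zeta_p]$-vector space, with a chosen generator $g \in C_p$ acting as multiplication by some primitive $p$-th root of unity that I denote $\zeta_p$. Hence $\End(\rho) \cong \Q[\zeta_p]$. By Proposition~\ref{prop:endo_algebra}, every non-degenerate $\rho$-invariant pairing is represented by a matrix $Q_0 C$ with $C \in \End(\rho)$, and such a matrix is symmetric precisely when $C$ is fixed by the involution $\iota(C) = Q_0^{-1} C^T Q_0$ of $\End(\rho)$. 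Since $C_p$ is abelian, $\rho(g) \in \End(\rho)$, and the invariance identity $\rho(g)^T Q_0 \rho(g) = Q_0$ gives $\iota(\rho(g)) = \rho(g)^{-1}$. Under the identification $\End(\rho) \cong \Q[\zeta_p]$ this shows that $\iota$ is the automorphism $\zeta_p \mapsto \zeta_p^{-1}$, whose fixed field is the maximal totally real subfield $\Q[\zeta_p]^+$.

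Thus every non-degenerate symmetric invariant form has matrix $Q_0 C_\alpha$ for some $\alpha \in \Q[\zeta_p]^+$, where $C_\alpha \in \End(\rho)$ is the matrix of multiplication by $\alpha$. Since the determinant of $C_\alpha$ as a $\Q$-linear endomorphism of $\Q[\zeta_p]$ is $N_{\Q[\zeta_p]/\Q}(\alpha)$, we obtain
\begin{equation*}
\det(Q_0 C_\alpha) \;=\; \det(Q_0)\cdot N_{\Q[\zeta_p]/\Q}(\alpha) \;=\; p\cdot N_{\Q[\zeta_p]/\Q}(\alpha).
\end{equation*}
As $\Q[\zeta_p]/\Q[\zeta_p]^+$ is a quadratic extension whose nontrivial Galois automorphism is complex conjugation, and $\alpha \in \Q[\zeta_p]^+$ is fixed by it, we have $N_{\Q[\zeta_p]/\Q[\zeta_p]^+}(\alpha) = \alpha^2$, and consequently
\begin{equation*}
N_{\Q[\zeta_p]/\Q}(\alpha) \;=\; N_{\Q[\zeta_p]^+/\Q}(\alpha^2) \;=\; N_{\Q[\zeta_p]^+/\Q}(\alpha)^2 \;\in\; (\Q^\times)^2.
\end{equation*}
Therefore $d(q) = p$ in $\Q^\times/(\Q^\times)^2$ for every non-degenerate $q \in \calQ(\rho)$.

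The conceptual heart of the argument is identifying the involution $\iota$ with complex conjugation on $\Q[\zeta_p]$; once this is in hand the norm computation that produces a square is essentially immediate. I do not anticipate obstacles beyond this step, as the explicit Cartan-matrix calculation and the Wedderburn description of $\Q[C_p]$ are standard.
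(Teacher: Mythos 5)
Your proof is correct and follows essentially the same route as the paper: both arguments use Proposition~\ref{prop:endo_algebra} to parametrize the symmetric invariant forms by elements of the real subfield $\Q[\zeta_p]\cap\R$, observe that the norm of such an element down to $\Q$ is a square, and then pin down the discriminant by computing the determinant of the tridiagonal (Cartan-type) Gram matrix to be $p$. The only differences are cosmetic — you realize $\rho$ on the augmentation hyperplane rather than on $\Q[\zeta_p]$ itself, and you make the adjoint involution $\iota(C)=Q_0^{-1}C^TQ_0$ and its identification with complex conjugation more explicit than the paper does.
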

\begin{proof}
    Let $\zeta$ denote a $p$th root of unity. The irreducible non-trivial representation of $C_p$ can be obtained considering the action of $\zeta$ on the field $\Q[\zeta]$ viewed as a vector space over $\Q$. Choosing the $\Q$-basis for $\Q[\zeta]$ given by $1,\zeta, \dots, \zeta^{p-2}$ means that $\zeta$ can be identified with the matrix
    
    \[
    \zeta=\begin{pmatrix}
        0 & 0 & \dots & 0 & -1 \\
        1 & 0 & \dots & 0 & -1 \\
        0 & 1 & \dots &0 & -1\\
        \vdots &\vdots &\ddots &\vdots&\vdots\\
        0 & 0& \dots& 1&-1
    \end{pmatrix}.\]
If we take a matrix $\alpha=\sum_i a_i \zeta^i$, then by definition $\det \alpha$ is equal to $N_{\Q[\zeta]/\Q}(\alpha)$, where $N_{\Q[\zeta]/\Q}$ denote the norm of the field extension $\Q[\zeta]/\Q$.

Since $\rho$ splits into $p-1$ distinct irreducible representations over $\C$, the endomorphism algebra $\End(\rho)$ is of dimension $p-1$. Thus, we see that $\End(\rho)=\Q[\zeta]$.

Let $Q\in\calQ(\rho)$ represent a non-zero quadratic form. By Proposition~\ref{prop:endo_algebra}, every other matrix representing a $\rho$-invariant bilinear form takes the form $Q\alpha$ for some $\alpha\in \End(\rho)$. However if $Q\alpha$ represents a quadratic form, then $\alpha$ must correspond to an element of the real field $\Q[\zeta]\cap \R$. Since $\zeta^T Q=Q\zeta^{-1}$, we see that $Q\alpha$ is symmetric only if
\[Q\alpha=\alpha^TQ=\sum_{k=0}^{p-2} a_k \zeta^kQ=\sum_{k=0}^{p-2} a_k Q\zeta^{-k}= Q\overline{\alpha}.\]
However, for any non-zero element $\alpha \in \Q[\zeta]\cap \R$, we can calculate the norm
\[
N_{\Q[\zeta]:\Q}(\alpha)=N_{\Q[\alpha]/\Q}(\alpha)^{[\Q[\alpha]:\Q[\zeta]]},
\]
to be a square in $\Q$ since the degree of the extension $\Q[\zeta]/\Q[\alpha]$, denoted $[\Q[\zeta]:\Q[\alpha]]$, is even. Thus we see that all non-zero elements of $\calQ(\rho)$ have the same discriminant. Thus it suffices to calculate the discriminant for a single form. The matrix
\[
Q= \begin{pmatrix}
    2 & -1    & 0 & 0 \\
    -1 & 2 & -1 & 0 \\ 
 0  &-1  & \ddots & -1  \\
 0  & 0 & -1    &2    
\end{pmatrix}.
\]
represents a form in $\calQ(\rho)$ since one can verify that $\zeta^T Q \zeta =Q$. One can calculate that we have $\det Q= p$.
\end{proof}

\begin{reptheorem}{thm:prime_analysis}
\thmprimeanalysis
\end{reptheorem}
\begin{proof}
    Let $H\cong (C_p)^k$ for some $k\geq 1$. Let $\sigma$ be a non-trivial irreducible rational representation for $H$. Such a representation is $(p-1)$-dimensional and restricts to a non-trivial representation on some $C_p$ subgroup. Thus Proposition~\ref{prop:prime_reps} shows that every non-degenerate quadratic form $q\in \calQ(\sigma)$ has discriminant $d(q)=p\in \Q^\times/(\Q^\times)^2$. Thus if $b_1(B)=0$, then the holonomy representation of $B$ has no trivial subrepresentations and can be decomposed as $\rho\cong \sigma_1 \oplus \dots \oplus \sigma_m$, where each $\sigma_i$ is $p-1$-dimensional and $n=m(p-1)$. If $B$ appears in the commensurability class defined by a form $q$, then $q$ is equivalent to
    \[
    q\cong f_1 \oplus \dots \oplus f_m \oplus \langle 1, -1 \rangle, 
    \]
    where each $f_i$ has discriminant $d(f_i)=p$. Thus we see that $q$ has discriminant
    \[
    d(q)=-p^m=\begin{cases}
        -1 &\text{if $n\equiv 0 \bmod 2(p-1)$}\\
        -p &\text{if $n\equiv p-1 \bmod 2(p-1)$}.
    \end{cases}
    \]
\end{proof}

In order to prove Corollary~\ref{cor:top_obstruction} we need to consider products of certain flat manifolds.

\begin{lemma}\label{lem:product_discriminant_property}
    Let $B_1$ and $B_2$ be compact flat manifolds and $d_1,d_2\in \Q_{<0}$ be rationals such that if $B_i$ is a cusp cross-section in a quasi-arithmetic hyperbolic manifold corresponding to a form $q_i$, then $d(q_i)=d_i\in \Q^\times/(\Q^\times)^2$. Then, if $B_1\times B_2$ is a cusp cross-section in a quasi-arithmetic hyperbolic manifold corresponding to a form $q$, then $d(q)=-d_1d_2\in \Q^\times/(\Q^\times)^2$.
\end{lemma}
\begin{proof}
Let $H_1$ and $H_2$ be the holonomy groups and $\rho_1$ and $\rho_2$ be rational holonomy representations for $B_1$ and $B_2$, respectively. The conditions on $B_1$ and $B_2$ imply that for all non-degenerate forms $f_i\in \calQ(\rho_i)$ we have $d(f_i)=-d_i$. We obtain a holonomy representation $\rho$ of $B_1 \times B_2$ by taking $\rho=(\rho_1\otimes \mathbf{1}_{H_2}) \oplus (\mathbf{1}_{H_1} \otimes \rho_2)$. Any element $f\in \calQ(\rho)$ takes the form $f=f_1 \oplus f_2$ for some $f_1\in \calQ(\rho_1)$ and $f_2\in \calQ(\rho_2)$. Consequently, we see that if $B_1\times B_2$ is a cusp cross-section in a quasi-arithmetic hyperbolic manifold corresponding to a form $q$, then $d(q)=-d_1d_2\in \Q^\times/(\Q^\times)^2$.
\end{proof}

\begin{repcor}{cor:top_obstruction}
    \cortopobstruction
\end{repcor}
\begin{proof}
    We take $B$ to be a 28-dimensional flat manifold with $b_1(B)=0$ and holonomy group $(C_5)^2$. For $k\geq 4$, we take $D_k$ to be an $2k$-dimensional flat manifold with $b_1(D_k)=0$ and holonomy group $(C_3)^2$. By Theorem~\ref{thm:prime_analysis}, if $D_{k}$ appears as a cusp cross-section in a quasi-arithmetic hyperbolic manifold, then the corresponding form must have discriminant $d(q)=-1$ or $-3$. By Theorem~\ref{thm:prime_analysis} and Lemma~\ref{lem:product_discriminant_property}, if $B\times D_k$ appears as a cusp cross-section in a quasi-arithmetic hyperbolic manifold, then the corresponding form must have discriminant $d(q)=-5$ or $-15$. Consequently, for $k\geq 2$ the manifolds $D_{18+k}$ and $B\times D_k$ are $36+2k$-dimensional flat manifolds that cannot occur as cusp cross-sections simultaneously in any quasi-arithmetic hyperbolic manifold.
\end{proof}
\bibliographystyle{alpha}
\bibliography{bib}
\end{document}